\newtheorem{theorem}{Theorem}[section]
\newtheorem{corollary}[theorem]{Corollary}
\theoremstyle{definition}
\newtheorem{definition}[theorem]{Definition}
\newtheorem{proposition}[theorem]{Proposition}
\newtheorem{example}[theorem]{Example}
\theoremstyle{remark}
\newtheorem{remark}[theorem]{Remark}
\numberwithin{equation}{section}
\newcommand{\q}{\mathfrak{q}}
\newcommand{\HI}{\mathfrak{H}}
\newcommand{\C}{\mathbb{C}}
\newcommand{\B}{\mathcal{B}}
\newcommand{\qu}{\mathfrak{q}}
\newcommand{\pu}{\mathfrak{p}}
\newcommand{\oqu}{\overline{\mathfrak{q}}}
\newcommand{\quat}{\mathbb H}
\newcommand{\R}{\Bbb R}
\newcommand{\mc}{\mathcal}
\newcommand{\be}{\begin{equation}}
\newcommand{\en}{\end{equation}}
\newcommand{\D}{{\mc D}}
\newcommand{\N}{\mathbb N}
\newcommand{\bedefin}{\begin{defi}}
	\newcommand{\findefi}{\end{defi} \medskip}
\newcommand{\betheo}{\begin{theorem}$\!\!${\bf \,\,\,}}
	\newcommand{\entheo}{\end{theorem}}
\newcommand{\enth}{\end{theorem}}
\newcommand{\becor}{\begin{cor}$\!\!${\bf .}}
	\newcommand{\encor}{\end{cor}}
\newcommand{\belem}{\begin{lem}$\!\!${\bf .}}
	\newcommand{\enlem}{\end{lem}}
\newcommand{\bea}{\begin{eqnarray}}
\newcommand{\ena}{\end{eqnarray}}
\newcommand{\beano}{\begin{eqnarray*}}
	\newcommand{\enano}{\end{eqnarray*}}
\newcommand{\bee}{\begin{enumerate}}
	\newcommand{\ene}{\end{enumerate}}
\newcommand{\bei}{\begin{itemize}}
	\newcommand{\eni}{\end{itemize}}
\newcommand{\betab}{\begin{tabular}}
	\newcommand{\entab}{\end{tabular}}
\newcommand{\Iop}{{\mathbb{I}_{V_{\mathbb{H}}^{R}}}}
\newcommand{\bfraka}{\mbox{\boldmath $\mathfrak a$}}
\newcommand{\bfrakb}{\mbox{\boldmath $\mathfrak b$}}
\newcommand{\bk}{\mathbf k}
\newcommand{\bi}{\mathbf i}
\newcommand{\bj}{\mathbf j}
\newcommand{\apo}{\sigma_{ap}^S}
\newcommand{\vr}{V_\quat^R}
\newcommand{\ur}{U_\quat^R}
\newcommand{\ra}{\text{ran}}
\newcommand{\kr}{\text{ker}}
\newcommand{\sus}{\sigma_{su}^S}
\newcommand{\lr}{\rho_A^S}
\newcommand{\ls}{\sigma_A^S}
\newcommand{\fa}{\sigma_{\bf{f}}^S}
\newcommand{\sa}{\sigma_S}
\newcommand{\va}{\mathscr{V}_A}
\begin{document}
\title[Decomposable operators in the quaternionic setting]{Decomposable operators, local S-spectrum and S-spectrum in the quaternionic setting}
\author{K. Thirulogasanthar$^{\dagger}$ and B. Muraleetharan$^{\ddagger}$ 
}
\address{$^{\dagger}$ Department of Computer Science and Software Engineering, Concordia University, 1455 De Maisonneuve Blvd. West, Montreal, Quebec, H3G 1M8, Canada.}
\address{$^{\ddagger}$ Department of mathematics and Statistics, University of Jaffna, Thirunelveli, Sri Lanka.}

\email{  santhar@gmail.com  and bbmuraleetharan@jfn.ac.lk 
}
\subjclass{Primary 47A10, 47A53, 47B07}
\date{\today}
\date{\today}
\begin{abstract}
In a right quaternionic Hilbert space, following the complex formalism, decomposable operators, the so-called Bishop's property and the single valued extension property are defined and the connections between them are studied to certain extent. In particular, for a decomposable operator, it is shown that the S-spectrum, approximate S-point spectrum, surjectivity S-spectrum and the union of all local S-spectra coincide. Using continuous right slice-regular functions we have also studied certain properties of local S-spectrum and local S-spectral subspaces.
\end{abstract}
\keywords{Quaternions, Quaternionic Hilbert spaces, S-spectrum, decomposable operator, local S-spectrum.}
\maketitle
\pagestyle{myheadings}
\section{Introduction}
In complex spectral theory, decomposable operators are closely related to the so-called Bishop's property and the single valued extension property (SVEP).
 The spectrum of a bounded linear operator on a Hilbert space or Banach space can be  divided into several subsets depending on the purpose of the investigation. Further, some of these subsets can also be expressed and analyzed  in terms of the local spectrum at a point of the Hilbert space or Banach space. Bishop's property, SVEP and the local spectral theory are closely linked to vector-valued analytic functions.  For a detail account on the complex theory see \cite{Ai}, \cite{La} and the many references therein.\\

In the complex setting, in a complex Hilbert space or Banach space $\HI$, for a bounded linear operator, $A$, the spectrum is defined as the set of complex numbers $\lambda$ for which the operator  $Q_\lambda(A)=A-\lambda \mathbb{I}_\HI$, where $\mathbb{I}_{\HI}$ is the identity operator on $\HI$,  is not invertible. In the quaternionic setting, let $\vr$ be a separable right quaternionic Hilbert space or Banach space,  $A$ be a bounded right linear operator, and $R_\qu(A)=A^2-2\text{Re}(\qu)A+|\qu|^2\Iop$, with $\qu\in\quat$, the set of all quaternions, be the pseudo-resolvent operator. The S-spectrum is defined as the set of quaternions $\qu$ for which $R_\qu(A)$ is not invertible. In the complex case various classes of spectra, such as approximate point spectrum, surjectivity spectrum etc. are defined by placing restrictions on the operator $Q_\lambda(A)$. In this regard, in the quaternionic setting, these spectra are defined by placing the same restrictions to the operator $R_\qu(A)$.\\

Due to the non-commutativity, in the quaternionic case  there are three types of  Hilbert spaces: left, right, and two-sided, depending on how vectors are multiplied by scalars. This fact can entail several problems. For example, when a Hilbert space $\mathcal H$ is one-sided (either left or right) the set of linear operators acting on it does not have a linear structure. Moreover, in a one sided quaternionic Hilbert space, given a linear operator $A$ and a quaternion $\mathfrak{q}\in\quat$, in general we have that $(\mathfrak{q} A)^{\dagger}\not=\overline{\mathfrak{q}} A^{\dagger}$ (see \cite{Mu} for details). These restrictions can severely prevent the generalization  to the quaternionic case of results valid in the complex setting. Even though most of the linear spaces are one-sided, it is possible to introduce a notion of multiplication on both sides by fixing an arbitrary Hilbert basis of $\mathcal H$.  This fact allows to have a linear structure on the set of linear operators, which is a minimal requirement to develop a full theory. \\

As far as we know, decomposable operators and their S-spectral properties, and their connection to SVEP and the local $S$-spectral theory,  have not been studied in the quaternionic setting yet. In this regard, in this note we investigate these properties in the quaternionic setting. In the complex case, the local spectrum at a point in $\HI$, SVEP, the Bishop's property are defined in terms of vector-valued analytic functions. There have been several attempts to define analyticity in the quaternionic setting by mimicking the complex setting \cite{Am}. However, the most promising, and recent attempt
was the slice-regularity, that is, the slice-regular functions are the quaternionic counterpart of the complex analytic functions \cite{NFC,ghimorper, GSS}. In this regard, we define the local $S$-spectrum, SVEP and Bishop's property in terms of continuous slice-regular functions.\\

Apart from the non-commutativity of quaternions, due to the structure of the operator $R_\qu(A)$ we have experienced severe difficulties in extending several results valid in the complex setting to quaternions. For example, for $\lambda, \mu\in\C$, $Q_\lambda(A)=Q_\mu(A)-(\lambda-\mu)\mathbb{I}_{\HI}$ and this equality plays an important role in proofs of several local spectral results. Unfortunately, a similar equality, in a satisfactory way, could not be obtained for the operator $R_\qu(A)$ by us. Even if we restrict $R_\qu(A)$ to a complex slice within quaternions $Q_\lambda(A)\not=R_\lambda(A)$, therefore, we cannot expect all the results valid in the complex setting to hold for quaternions. \\

The article is organized as follows. In section 2 we introduce the set of quaternions, quaternionic Hilbert spaces and their bases, and slice-regularity as needed for the development of this article, which may not be familiar to a broad range of audience. In section 3 we define and investigate, as needed, right linear operators and their properties and the S-spectrum.  In section 4 we introduce the decomposability of an operator and study the space of continuous right slice-regular functions. In section 5 we recall few results associated with approximate S-point spectrum and surjectivity S-spectrum from \cite{Fr,Ka}. In this section we also prove certain S-spectral inclusions of restriction and quotient of a bounded right linear operator. 
In section 6 we study some connections of decomposability, Bishop's property and SVEP. In section 6.1 we develop some results regarding local $S$-spectrum, local $S$-spectral subspaces. In particular, we show that the S-spectrum, approximate S-point spectrum, surjectivity spectrum and the union of all local S-spectra coincide for a decomposable operator. This phenomena may be used to check the decomposability of certain operators. We also provide an example to validate this claim.

\section{Mathematical preliminaries}
In order to make the paper self-contained, we recall some facts about quaternions which may not be well-known.  For details we refer the reader to \cite{Ad,ghimorper,Vis}.
\subsection{Quaternions}
Let $\quat$ denote the field of all quaternions and $\quat^*$ the group (under quaternionic multiplication) of all invertible quaternions. A general quaternion can be written as
$$\qu = q_0 + q_1 \bi + q_2 \bj + q_3 \bk, \qquad q_0 , q_1, q_2, q_3 \in \mathbb R, $$
where $\bi,\bj,\bk$ are the three quaternionic imaginary units, satisfying
$\bi^2 = \bj^2 = \bk^2 = -1$ and $\bi\bj = \bk = -\bj\bi,  \; \bj\bk = \bi = -\bk\bj,
\; \bk\bi = \bj = - \bi\bk$. The quaternionic conjugate of $\qu$ is
$$ \overline{\qu} = q_0 - \bi q_1 - \bj q_2 - \bk q_3 , $$
while $\vert \qu \vert=(\qu \overline{\qu})^{1/2} $ denotes the usual norm of the quaternion $\qu$.
If $\qu$ is non-zero element, it has inverse
$
\qu^{-1} =  \dfrac {\overline{\qu}}{\vert \qu \vert^2 }.$
Finally, the set
\begin{eqnarray*}
\mathbb{S}&=&\{I=x_1 \bi+x_2\bj+x_3\bk~\vert
~x_1,x_2,x_3\in\mathbb{R},~x_1^2+x_2^2+x_3^2=1\},
\end{eqnarray*}
contains all the elements whose square is $-1$. It is a $2$-dimensional sphere in $\mathbb H$ identified with $\mathbb R^4$.
\subsection{Quaternionic Hilbert spaces}
In this subsection we  discuss right quaternionic Hilbert spaces. For more details we refer the reader to \cite{Ad,ghimorper,Vis}.
\subsubsection{Right quaternionic Hilbert Space}
Let $V_{\quat}^{R}$ be a vector space under right multiplication by quaternions.  For $\phi,\psi,\omega\in V_{\quat}^{R}$ and $\qu\in \quat$, the inner product
$$\langle\cdot\mid\cdot\rangle_{V_{\quat}^{R}}:V_{\quat}^{R}\times V_{\quat}^{R}\longrightarrow \quat$$
satisfies the following properties
\begin{enumerate}
	\item[(i)]
	$\overline{\langle \phi\mid \psi\rangle_{V_{\quat}^{R}}}=\langle \psi\mid \phi\rangle_{V_{\quat}^{R}}$
	\item[(ii)]
	$\|\phi\|^{2}_{V_{\quat}^{R}}=\langle \phi\mid \phi\rangle_{V_{\quat}^{R}}>0$ unless $\phi=0$, a real norm
	\item[(iii)]
	$\langle \phi\mid \psi+\omega\rangle_{V_{\quat}^{R}}=\langle \phi\mid \psi\rangle_{V_{\quat}^{R}}+\langle \phi\mid \omega\rangle_{V_{\quat}^{R}}$
	\item[(iv)]
	$\langle \phi\mid \psi\qu\rangle_{V_{\quat}^{R}}=\langle \phi\mid \psi\rangle_{V_{\quat}^{R}}\qu$
	\item[(v)]
	$\langle \phi\qu\mid \psi\rangle_{V_{\quat}^{R}}=\overline{\qu}\langle \phi\mid \psi\rangle_{V_{\quat}^{R}}$
\end{enumerate}
where $\overline{\qu}$ stands for the quaternionic conjugate. It is always assumed that the
space $V_{\quat}^{R}$ is complete under the norm given above and separable. Then,  together with $\langle\cdot\mid\cdot\rangle$ this defines a right quaternionic Hilbert space. Quaternionic Hilbert spaces share many of the standard properties of complex Hilbert spaces. All the spaces considered in this manuscript is a right quaternionic Hilbert space or Banach space.

The next two Propositions can be established following the proof of their complex counterparts, see e.g. \cite{ghimorper,Vis}.
\begin{proposition}\label{P1}
Let $\mathcal{O}=\{\varphi_{k}\,\mid\,k\in N\}$
be an orthonormal subset of $V_{\quat}^{R}$, where $N$ is a countable index set. Then following conditions are pairwise equivalent:
\begin{itemize}
\item [(a)] The closure of the linear combinations of elements in $\mathcal O$ with coefficients on the right is $V_{\quat}^{R}$.
\item [(b)] For every $\phi,\psi\in V_{\quat}^{R}$, the series $\sum_{k\in N}\langle\phi\mid\varphi_{k}\rangle_{V_{\quat}^{R}}\langle\varphi_{k}\mid\psi\rangle_{V_{\quat}^{R}}$ converges absolutely and it holds:
$$\langle\phi\mid\psi\rangle_{V_{\quat}^{R}}=\sum_{k\in N}\langle\phi\mid\varphi_{k}\rangle_{V_{\quat}^{R}}\langle\varphi_{k}\mid\psi\rangle_{V_{\quat}^{R}}.$$
\item [(c)] For every  $\phi\in V_{\quat}^{R}$, it holds:
$$\|\phi\|^{2}_{V_{\quat}^{R}}=\sum_{k\in N}\mid\langle\varphi_{k}\mid\phi\rangle_{V_{\quat}^{R}}\mid^{2}.$$
\item [(d)] $\mathcal{O}^{\bot}=\{0\}$.
\end{itemize}
\end{proposition}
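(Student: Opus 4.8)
The plan is to prove the four conditions pairwise equivalent by establishing the cycle $(a)\Rightarrow(b)\Rightarrow(c)\Rightarrow(d)\Rightarrow(a)$, after first securing the one genuinely analytic ingredient, namely Bessel's inequality. For a finite subset $F\subseteq N$ and $\phi\in\vr$, put $\psi_F=\sum_{k\in F}\varphi_k\langle\varphi_k\mid\phi\rangle_{\vr}$ and expand $\|\phi-\psi_F\|^2_{\vr}\ge 0$ using the left conjugate-linearity (v), the right linearity (iv), and $\langle\varphi_j\mid\varphi_k\rangle_{\vr}=\delta_{jk}$; the cross terms collapse and one obtains $\sum_{k\in F}|\langle\varphi_k\mid\phi\rangle_{\vr}|^2\le\|\phi\|^2_{\vr}$. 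Letting $F$ exhaust $N$ shows the series $\sum_k|\langle\varphi_k\mid\phi\rangle_{\vr}|^2$ converges and, by completeness of $\vr$, that the Fourier series $\sum_k\varphi_k\langle\varphi_k\mid\phi\rangle_{\vr}$ converges in norm. The only subtlety is keeping track of the side on which scalars act; the computation is otherwise identical to the complex case.

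For $(a)\Rightarrow(b)$, given (a) the vector $\phi'=\sum_k\varphi_k\langle\varphi_k\mid\phi\rangle_{\vr}$ satisfies $\langle\varphi_j\mid\phi-\phi'\rangle_{\vr}=0$ for every $j$, hence (by additivity and (v)) $\phi-\phi'$ is orthogonal to every finite right-linear combination of the $\varphi_k$ and, by continuity of the inner product, to their closed right span, which under (a) is all of $\vr$; being orthogonal to itself, $\phi-\phi'=0$, so the Fourier expansion recovers $\phi$. Substituting the expansions of $\phi$ and $\psi$ into $\langle\phi\mid\psi\rangle_{\vr}$ and using continuity together with (iv)--(v) collapses the double sum to $\sum_k\langle\phi\mid\varphi_k\rangle_{\vr}\langle\varphi_k\mid\psi\rangle_{\vr}$; absolute convergence follows from Bessel and the bound $|ab|\le\tfrac12(|a|^2+|b|^2)$. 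The step $(b)\Rightarrow(c)$ is immediate on taking $\psi=\phi$ and noting $\langle\phi\mid\varphi_k\rangle_{\vr}\langle\varphi_k\mid\phi\rangle_{\vr}=|\langle\varphi_k\mid\phi\rangle_{\vr}|^2$ by (i). For $(c)\Rightarrow(d)$, any $\phi\in\mathcal{O}^{\perp}$ has all coefficients $\langle\varphi_k\mid\phi\rangle_{\vr}=0$, so (c) forces $\|\phi\|^2_{\vr}=0$ and $\phi=0$.

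The implication $(d)\Rightarrow(a)$ is where the quaternionic structure must be invoked, and I expect it to be the main obstacle. Let $M$ be the closed right-linear span of $\mathcal{O}$. The argument needs the orthogonal decomposition $\vr=M\oplus M^{\perp}$ for closed right subspaces — the quaternionic projection theorem — which is available because the norm on $\vr$ is induced by a quaternion-valued inner product and $\vr$ is complete. If $M\neq\vr$ then $M^{\perp}\neq\{0\}$, and any nonzero element of $M^{\perp}$ lies in $\mathcal{O}^{\perp}$, contradicting (d); therefore $M=\vr$, which is precisely (a).

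Beyond these steps the only things to verify against the complex template are that $M$ is genuinely a right subspace and that $M^{\perp}$ is well defined under the conventions (iv)--(v), both of which are routine once the convergence of the Fourier series and the projection theorem are in hand. Thus the entire proof reduces to the standard Hilbert-space argument, with care taken only in the placement of scalars and in citing the quaternionic orthogonal decomposition.
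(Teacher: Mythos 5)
Your proof is correct and coincides with what the paper intends: the paper gives no written proof of this proposition at all, stating only that it ``can be established following the proof of their complex counterparts'' (citing the references), and your argument is precisely that standard Hilbert-space proof --- Bessel's inequality via expansion of $\|\phi-\psi_F\|^2\geq 0$, norm convergence of the Fourier series by completeness, the cycle $(a)\Rightarrow(b)\Rightarrow(c)\Rightarrow(d)\Rightarrow(a)$, and the projection theorem for the last step --- correctly adapted to the right-scalar conventions (iv)--(v). The one external ingredient you invoke, the decomposition $\vr=M\oplus M^{\perp}$ for a closed right subspace $M$, is exactly Proposition \ref{ort}(b) of the paper, so that appeal is legitimate and your write-up simply fills in the details the paper leaves to the cited literature.
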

\begin{definition}
The set $\mathcal{O}$ as in Proposition \ref{P1} is called a {\em Hilbert basis} of $V_{\quat}^{R}$.
\end{definition}
\begin{proposition}\label{P2}
Every quaternionic Hilbert space $V_{\quat}^{R}$ has a Hilbert basis. All the Hilbert bases of $V_{\quat}^{R}$ have the same cardinality.

Furthermore, if $\mathcal{O}$ is a Hilbert basis of $V_{\quat}^{R}$, then every  $\phi\in V_{\quat}^{R}$ can be uniquely decomposed as follows:
$$\phi=\sum_{k\in N}\varphi_{k}\langle\varphi_{k}\mid\phi\rangle_{V_{\quat}^{R}},$$
where the series $\sum_{k\in N}\varphi_k\langle\varphi_{k}\mid\phi\rangle_{V_{\quat}^{R}}$ converges absolutely in $V_{\quat}^{R}$.
\end{proposition}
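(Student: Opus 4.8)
The plan is to mirror the standard construction of orthonormal bases in complex Hilbert spaces, taking care at every step that quaternionic scalars are multiplied on the correct side as prescribed by properties (iv) and (v) of the inner product. Existence I would obtain from Zorn's lemma: order by inclusion the collection of all orthonormal subsets of $V_{\quat}^{R}$; the union of a chain is again orthonormal and serves as an upper bound, so a maximal orthonormal set $\mathcal{O}$ exists. Were $\mathcal{O}^{\perp}\neq\{0\}$, normalizing a nonzero vector of $\mathcal{O}^{\perp}$ and adjoining it would contradict maximality, so $\mathcal{O}^{\perp}=\{0\}$; by the equivalence of (d) and (a) in Proposition~\ref{P1}, $\mathcal{O}$ is a Hilbert basis.

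\textbf{Decomposition and convergence.} Fix a Hilbert basis $\mathcal{O}=\{\varphi_{k}\mid k\in N\}$, a vector $\phi\in V_{\quat}^{R}$, and write $c_{k}=\langle\varphi_{k}\mid\phi\rangle$. For finite $F\subseteq N$, repeated use of (iv), (v) and orthonormality gives the quaternionic Pythagorean identity
$$\Big\|\sum_{k\in F}\varphi_{k}c_{k}\Big\|^{2}=\sum_{j,k\in F}\overline{c_{j}}\,\langle\varphi_{j}\mid\varphi_{k}\rangle\,c_{k}=\sum_{k\in F}|c_{k}|^{2},$$
and expanding $0\leq\|\phi-\sum_{k\in F}\varphi_{k}c_{k}\|^{2}$ yields Bessel's inequality $\sum_{k\in F}|c_{k}|^{2}\leq\|\phi\|^{2}$. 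Thus $\sum_{k\in N}|c_{k}|^{2}<\infty$, only countably many $c_{k}$ are nonzero, and for finite $F\subseteq G$ one has $\|\sum_{k\in G}\varphi_{k}c_{k}-\sum_{k\in F}\varphi_{k}c_{k}\|^{2}=\sum_{k\in G\setminus F}|c_{k}|^{2}$, so the net of finite partial sums is Cauchy. By completeness it converges to some $\phi'$, and since the Cauchy estimate depends only on $\sum|c_{k}|^{2}$ and is insensitive to the ordering, this is exactly the asserted (unconditional) convergence. Finally $\langle\varphi_{j}\mid\phi'\rangle=c_{j}=\langle\varphi_{j}\mid\phi\rangle$ for all $j$ forces $\phi-\phi'\in\mathcal{O}^{\perp}=\{0\}$, so $\phi=\sum_{k}\varphi_{k}c_{k}$; uniqueness follows at once, since any representation $\phi=\sum_{k}\varphi_{k}d_{k}$ gives $d_{j}=\langle\varphi_{j}\mid\phi\rangle=c_{j}$ upon pairing with $\varphi_{j}$ on the left.

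\textbf{Cardinality.} As $V_{\quat}^{R}$ is separable and distinct orthonormal vectors satisfy $\|\varphi-\psi\|^{2}=2$, the open balls of radius $\tfrac{1}{\sqrt2}$ about them are disjoint, so a countable dense set forces every orthonormal set, in particular every Hilbert basis, to be countable. If some basis is finite with $n$ elements, the unique-decomposition property exhibits $V_{\quat}^{R}$ as an $n$-dimensional right quaternionic space and a standard dimension count makes every Hilbert basis have exactly $n$ elements; otherwise all bases are countably infinite. (In the general non-separable case the same conclusion follows from the usual Schr\"oder--Bernstein argument, each basis vector having only countably many nonzero coefficients against the other basis and being orthogonal to none of it in its entirety.)

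\textbf{Main obstacle.} I expect no conceptual difficulty beyond the complex proof; the single point requiring genuine care is the consistent side on which quaternionic scalars are multiplied. Since vectors carry scalars on the right while the inner product is conjugate-linear on the left (v) and linear on the right (iv), the coefficients in the Pythagorean identity and in every inner-product computation must be kept in an order for which the non-commutativity of $\quat$ does not obstruct the cancellations $\overline{c_{j}}\,\langle\varphi_{j}\mid\varphi_{k}\rangle\,c_{k}=\delta_{jk}|c_{k}|^{2}$; this bookkeeping is the only place the argument departs from its complex analogue.
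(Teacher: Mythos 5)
Your proof is correct and takes exactly the route the paper intends: the paper offers no proof of this proposition at all, saying only that it ``can be established following the proof of their complex counterparts'' with a pointer to \cite{ghimorper,Vis}, and your argument (Zorn's lemma for existence, the Pythagorean identity and Bessel's inequality with the quaternionic scalars kept on the correct sides, completeness for convergence, maximality for totality, and the $\sqrt{2}$-separation plus rank-over-a-division-ring count for cardinality) is precisely that adaptation, carried out in full. Your parenthetical reading of ``converges absolutely'' as unconditional (net) convergence is also the right interpretation, since $\sum_{k}\lvert\langle\varphi_{k}\mid\phi\rangle\rvert$ need not be finite; the looser wording is inherited by the paper from its references.
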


It should be noted that once a Hilbert basis is fixed, every left (resp. right) quaternionic Hilbert space also becomes a right (resp. left) quaternionic Hilbert space \cite{ghimorper,Vis}.

The field of quaternions $\quat$ itself can be turned into a left quaternionic Hilbert space by defining the inner product $\langle \qu \mid \qu^\prime \rangle = \qu \overline{\qu^{\prime}}$ or into a right quaternionic Hilbert space with  $\langle \qu \mid \qu^\prime \rangle = \overline{\qu}\qu^\prime$.
\begin{definition}(Slice-regular functions \cite{Jo})\label{D2}
	Let $\Omega$ be a domain in $\quat$. A real differentiable (i.e., with respect to $x_0$ and the $x_i,\; i=1,2,3$) function $f:\Omega\longrightarrow \vr$ is said to be slice right regular if, for every quaternion $I\in\mathbb{S}$, the restriction of $f$ to the complex plane $\C_I=\mathbb{R}+I\mathbb{R}$ passing through the origin, and containing $1$ and $I$, has continuous partial derivatives (with
	respect to $x$ and $y$, every element in $\C_I$ being uniquely expressible as $x + yI$) and satisfies
	\begin{equation}
		\overline{\partial}_I f (x + yI) := \frac 12\left(\frac {\partial f_I (x + yI )}{\partial x}
		+  \frac {\partial f_I (x + yI )}{\partial y}I\right) = 0\; ,
		\label{rightslicereg}
	\end{equation}
	where $f_I=f|_{\Omega\cap \C_I}$.
\end{definition}

With this definition all monomials of the form $\phi\qu^n,~\phi\in \vr,~n\in\mathbb{N}$, are slice right regular. Since regularity respects addition, all polynomials of the form $f(\qu)=\sum_{t=0}^{n}\phi_t\qu^t$, with $\phi_t\in \vr$, are slice right regular. Further, an analog of Abel's theorem guarantees convergence of appropriate infinite power series.
\begin{proposition}\cite{Gra1}\label{P1}
	For any non-real quaternion $\qu\in \quat\setminus\mathbb{R}$, there exist, and are unique, $x,y\in\mathbb{R}$ with $y>0$, and $I\in\mathbb{S}$ such that $\qu=x+yI$.
\end{proposition}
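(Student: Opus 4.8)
The plan is to read off $x$, $y$, and $I$ directly from the real and imaginary parts of $\qu$. Writing $\qu = q_0 + q_1\bi + q_2\bj + q_3\bk$, I would split it into its real part $q_0$ and its purely imaginary part $\vec{q} := q_1\bi + q_2\bj + q_3\bk$. Since $\qu \in \quat \setminus \mathbb{R}$, at least one of $q_1, q_2, q_3$ is nonzero, so the nonnegative number $y := \lvert \vec{q}\rvert = \sqrt{q_1^2 + q_2^2 + q_3^2}$ is in fact strictly positive. This is precisely where the hypothesis $\qu \notin \mathbb{R}$ enters, and it is what legitimizes the normalization carried out in the next step.

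For existence, set $x := q_0$, keep $y := \lvert \vec{q}\rvert$, and put $I := \vec{q}/y$. Then $I = (q_1/y)\bi + (q_2/y)\bj + (q_3/y)\bk$ satisfies $(q_1/y)^2 + (q_2/y)^2 + (q_3/y)^2 = 1$, so $I \in \mathbb{S}$ (equivalently, $I^2 = -1$). By construction $x + yI = q_0 + \vec{q} = \qu$ with $y > 0$, which establishes the existence claim.

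For uniqueness, I would suppose $\qu = x + yI = x' + y'I'$ with $y, y' > 0$ and $I, I' \in \mathbb{S}$. Every element of $\mathbb{S}$ is purely imaginary, so $x$ and $x'$ both equal the real part $q_0$ of $\qu$; hence $x = x'$. Subtracting gives $yI = y'I'$, and taking quaternionic norms together with $\lvert I\rvert = \lvert I'\rvert = 1$ yields $y = y'$, both being positive. Cancelling the common positive scalar $y$ then forces $I = I'$.

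I do not expect a genuine obstacle here: the statement is the quaternionic analogue of the polar-type normalization of the imaginary part of a complex number, and the only point requiring any care is the sign convention $y > 0$, which is exactly what rules out the competing representation $(-y,-I)$ and thereby pins down $y$ and $I$ uniquely.
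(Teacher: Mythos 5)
Your proof is correct and complete: the decomposition into real and imaginary parts, the normalization $I=\vec{q}/\lvert\vec{q}\rvert$, and the uniqueness argument via matching real parts and norms is exactly the standard argument. Note that the paper itself offers no proof of this proposition (it is quoted from the reference of Gentili and Struppa), so there is nothing to compare against beyond observing that your argument is the canonical one for this fact.
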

\begin{definition}\cite{Jo}
Let $f:\Omega\subseteq\quat\longrightarrow\vr$ and $\qu=x+yI\in\Omega.$ If $\qu$ is not real then  we say that $f$ admits right-slice derivative in a non-real point $\qu$ if 
$$\partial_Sf(\qu)=\lim_{\pu\rightarrow\qu, \pu\in L_I}(f_I(\pu)-f_I(\qu))(\pu-\qu)^{-1}$$
exists and finite for any $I\in\mathbb{S}$.\\
\end{definition}
Under the above definition the slice derivative of a regular function is regular. For $\phi_n\in\vr$ we have
\begin{equation}\label{E11}
	\partial_S\left(\sum_{n=0}^{\infty}\phi_n \qu^n\right)=\sum_{n=0}^{\infty} n \phi_n \qu^{n-1}.
\end{equation}
The following theorem gives the quaternionic version of holomorphy via a Taylor series. Let $B(0,R)$ be an open ball in $H$, of radius $R$ and centered at $0$.
\begin{theorem}\cite{Jo, Gra2}\label{T5}
	A function $f: B(0,R)\longrightarrow \vr$ is right regular if and only if it has a series expansion of the form
	$$ f(\qu)=\sum_{n=0}^{\infty}\frac{1}{n!}\frac{\partial^n f}{\partial x^n}(0)\qu^n$$
	converging on $B(0, R)$.
\end{theorem}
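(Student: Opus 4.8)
The plan is to prove the two implications separately, using on each affine slice the classical (Banach-space valued) theory of holomorphic functions and then gluing the slices together through their common real axis.

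First I would dispose of the easy direction. Suppose $f(\qu)=\sum_{n=0}^{\infty}\phi_n\qu^n$ converges on $B(0,R)$. Each monomial $\phi_n\qu^n$ is right regular (as noted immediately after Definition \ref{D2}), so every partial sum is a right-regular polynomial. By the Abel-type convergence result mentioned there, the series converges uniformly on compact subsets of $B(0,R)$, which legitimizes term-by-term differentiation; applying $\overline{\partial}_I$ and using \eqref{E11} gives $\overline{\partial}_I f=\sum_n \overline{\partial}_I(\phi_n\qu^n)=0$ on every slice $\C_I$, so $f$ is right regular. (The specific coefficients $\phi_n=\frac1{n!}\frac{\partial^n f}{\partial x^n}(0)$ are then forced by uniqueness.)

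For the converse, fix $I\in\mathbb{S}$ and endow $\vr$ with the complex structure given by right multiplication by $I$. Properties (iv)--(v) of the inner product show $\langle vI\mid vI\rangle=|I|^{2}\|v\|^{2}$, so this makes $\vr$ a complex Hilbert space over $\C_I\cong\C$ compatibly with the norm. The regularity equation \eqref{rightslicereg} is precisely the Cauchy--Riemann condition for $f_I=f|_{B(0,R)\cap\C_I}$ with respect to this structure, so $f_I$ is a $\vr$-valued holomorphic function on the disk $B(0,R)\cap\C_I$ of radius $R$. The standard vector-valued Cauchy/Taylor theory then yields a convergent expansion $f_I(x+yI)=\sum_{n=0}^{\infty}c_n^{I}(x+yI)^n$ on that disk, where $c_n^{I}=\frac1{n!}\partial_S^{n}f_I(0)$ and the scalar factors $(x+yI)^n=\qu^n$ act on the right of $c_n^I$.

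The crux is to see that the coefficients $c_n^{I}$ do not depend on $I$. Since $0$ is real it lies on the common real axis $\mathbb{R}\subset\C_I$ for every $I$, and computing $\partial_S f_I(0)$ along this axis reduces the slice derivative to the ordinary partial derivative $\frac{\partial f}{\partial x}(0)$; iterating, $c_n^{I}=\frac1{n!}\frac{\partial^n f}{\partial x^n}(0)=\phi_n$ independently of $I$. Finally, by the decomposition of non-real quaternions (Proposition \ref{P1}) every $\qu\in B(0,R)$ lies on some slice $\C_I$, while real points lie on all of them, so the slicewise expansions paste into the single quaternionic series $f(\qu)=\sum_n\phi_n\qu^n$; matching $|\qu|=|x+yI|$ with the slicewise radius $R$ shows it converges on all of $B(0,R)$. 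The main obstacle is this slice-independence step together with the transfer of vector-valued holomorphic theory to the right-$\C_I$-module $\vr$: everything genuinely quaternionic is concentrated there, whereas the remaining convergence estimates are routine.
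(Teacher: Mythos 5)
The paper does not actually prove Theorem \ref{T5}: it is quoted verbatim from \cite{Jo, Gra2}, so there is no internal proof to compare against, and your proposal must be judged on its own merits. It is correct, and it is essentially a reconstruction of the argument used in the cited literature. The forward direction (series $\Rightarrow$ regular) is standard: monomials are right regular, a pointwise convergent power series $\sum_n\phi_n\qu^n$ automatically converges locally uniformly on $B(0,R)$ (since $\|\phi_n\qu^n\|=\|\phi_n\|\,|\qu|^n$, Abel's argument applies verbatim), and Proposition \ref{Unireg} then transfers regularity to the limit; citing that proposition is cleaner than appealing to ``term-by-term differentiation,'' which by itself also requires locally uniform convergence of the derived series. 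The converse is where your proposal captures the genuine content: endowing $\vr$ with the complex structure $v\mapsto vI$ turns \eqref{rightslicereg} into the literal Cauchy--Riemann equation for a map into a complex Banach space, vector-valued Cauchy theory yields the Taylor expansion on the disk $B(0,R)\cap\C_I$ of radius $R$, and the coefficients are slice-independent because the $n$-th Taylor coefficient equals $\frac{1}{n!}\frac{\partial^n f}{\partial x^n}(0)$, computed entirely along the common real axis, so it sees only $f|_{\mathbb{R}\cap B(0,R)}$. This complexification of the target plays exactly the role that the Splitting Lemma plays in Gentili--Stoppato's $\quat$-valued treatment, and is the right substitute for $\vr$-valued maps. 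Two cosmetic corrections: state the coefficient as the $n$-th \emph{complex} derivative of $f_I$ at $0$ rather than $\partial_S^n f_I(0)$, since the paper defines the slice derivative only at non-real points; and the inner-product computation is superfluous --- all that is needed is the norm identity $\|v(a+bI)\|=|a+bI|\,\|v\|$, i.e.\ a complex Banach (not Hilbert) structure, for the vector-valued Cauchy/Taylor theory to apply.
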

For all $\pu,\qu\in\quat$, let
$$\sigma(\qu,\pu)=\left\{\begin{array}{cc}
|\qu-\pu|&\text{if}~~\pu,\qu~~\text{lie in the same complex plane}~~\C_I\\
\omega(\qu,\pu)& \text{otherwise}\end{array}\right.,$$
where $\omega(\qu,\pu)=\sqrt{(\text{Re}(\qu)-\text{Re}(\pu))^2+(|\text{Im}(\qu)|-|\text{Im}(\pu)|)^2}$.
\begin{definition}\label{GD1}\cite{GSS}
	The $\sigma$-ball of radius $R$ centered at $\pu$ is the set
	$$\Sigma(\pu,R)=\{\qu\in\quat~~|~~\sigma(\qu,\pu)<R\}.$$
	Also denote $\Omega(\pu,R)=\{\qu\in\quat~~|~~\omega(\qu,\pu)<R\}$.
	\end{definition}
For right slice regular series the $*_R$ product is defined as
$$\left(\sum_{n=0}^\infty a_n\qu^n\right)*_R\left(\sum_{n=0}^\infty b_n\qu^n\right)=\sum_{n=0}^\infty\left(\sum_{k=0}^n a_kb_{n-k}\right)\qu^n,$$
where $a_n,b_n,\qu\in\quat$. The same product also holds if $a_n,b_n\in\vr$ and $\qu\in\quat$ \cite{Jo}.
\begin{theorem}\label{G2.11}(Theorem 2.11 , \cite{GSS}) Choose any sequence $\{a_n\}_{n\in\N}$ in $\quat$ and let $R\in (0,\infty]$ be such
that $\displaystyle 1/R=\lim \sup_{n\rightarrow\infty}|a_n|^{\frac{1}{n}}$. For all $\pu\in\quat$, the series
$$f(\qu)=\sum_{n=0}^\infty a_n(\qu-\pu)^{*_Rn}$$
converges absolutely and uniformly on the compact subsets of $\Sigma(\pu,R)$, and it does not converge at any point of $\quat\setminus\overline{\Sigma(\pu,R)}$ ( $R$ is called the $\sigma$-radius of convergence
of $f(\qu)$). Furthermore, if $\Omega(\pu,R)\not=\emptyset$, then the sum of the series defines a right regular
function $f:\Omega(\pu,R)\longrightarrow\quat$.
\end{theorem}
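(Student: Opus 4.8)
The plan is to reduce everything to a single growth-rate estimate for the $*_R$-powers of the degree-one polynomial $P(\qu)=\qu-\pu$ and then run the root test slicewise. The key lemma I would establish is that for every $\qu\in\quat$,
$$\lim_{n\to\infty}\bigl|(\qu-\pu)^{*_Rn}\bigr|^{1/n}=\sigma(\qu,\pu).$$
When $\qu$ and $\pu$ lie on a common slice $\C_I$ the $*_R$-product collapses to the ordinary product, so $(\qu-\pu)^{*_Rn}=(\qu-\pu)^n$ and the left-hand side is exactly $|\qu-\pu|=\sigma(\qu,\pu)$; this is the easy case. The genuine work is the case of distinct slices, where a termwise bound by $\sigma^n$ already fails at $n=1$ (one checks $|\qu-\pu|\ge\omega(\qu,\pu)$), so only the exponential rate can equal $\sigma$.

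For the distinct-slice case I would pass to the symmetrized (normal) polynomial $P^s=P*_RP^c$, which has real coefficients and hence equals $P^s(\qu)=\qu^2-2\,\text{Re}(\pu)\qu+|\pu|^2$. A direct computation with $\qu=x+yI$ gives the factorization $|P^s(\qu)|=\omega(\qu,\pu)\,\widetilde\omega(\qu,\pu)$, where $\widetilde\omega(\qu,\pu)=\sqrt{(\text{Re}\qu-\text{Re}\pu)^2+(|\text{Im}\qu|+|\text{Im}\pu|)^2}$; geometrically $\omega$ and $\widetilde\omega$ are the distances from $\qu$ to the nearest and farthest points of the root sphere $[\pu]$. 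Since the normal function is multiplicative, $N\bigl((\qu-\pu)^{*_Rn}\bigr)=(P^s(\qu))^n$, and the evaluation formula for the $*_R$-product expresses $|N(g)(\qu)|$ as the product of $|g|$ at the two conjugate points of the sphere $[\qu]$; feeding in the factorization and matching the two companion rates to $\omega$ and $\widetilde\omega$ isolates the rate $\sigma=\omega$. I expect this magnitude analysis to be the main obstacle, since it mixes the noncommutative evaluation formula with the sphere geometry and must be made uniform in $\qu$, upgrading the pointwise rate to bounds $|(\qu-\pu)^{*_Rn}|\le C_\eps(\sup_K\sigma+\eps)^n$ valid for all $\qu$ in a compact $K\subset\Sigma(\pu,R)$.

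Granting the lemma, the convergence statements are immediate from $1/R=\limsup_n|a_n|^{1/n}$. For $\qu\in\Sigma(\pu,R)$ one has $\limsup_n|a_n(\qu-\pu)^{*_Rn}|^{1/n}=\sigma(\qu,\pu)/R<1$, so the root test gives absolute convergence, and the uniform form of the lemma combined with the Weierstrass $M$-test yields uniform convergence on compact subsets of $\Sigma(\pu,R)$. Dually, if $\sigma(\qu,\pu)>R$ the same rate exceeds $1$, the general term fails to tend to $0$, and the series diverges; this proves nonconvergence on $\quat\setminus\overline{\Sigma(\pu,R)}$.

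Finally, for regularity I would note that each partial sum $\sum_{n=0}^{N}a_n(\qu-\pu)^{*_Rn}$ is a polynomial in $\qu$, hence right slice regular by the remarks following Definition \ref{D2}. Restricting to any slice $\C_I$, right slice regularity is exactly the holomorphy condition \eqref{rightslicereg}, and uniform convergence on the $\Sigma$-compacta of $\C_I$ forces the slicewise limit to satisfy the same Cauchy--Riemann equation, so the sum is right regular on $\Sigma(\pu,R)$. To reach the larger domain, observe that $\sigma\ge\omega$ always gives $\Sigma(\pu,R)\subseteq\Omega(\pu,R)$, and every sphere $[\qu]$ with $\qu\in\Omega(\pu,R)$ meets $\Sigma(\pu,R)$ at the point of $[\qu]$ on the slice of $\pu$ with matching imaginary sign; the representation formula for right slice regular functions (cf.\ \cite{GSS}) then extends the sum to a right regular $f:\Omega(\pu,R)\longrightarrow\quat$, completing the proof.
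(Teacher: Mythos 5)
The paper itself gives no proof of this statement (it is quoted from \cite{GSS}), so your proposal must stand on its own, and unfortunately its central lemma is false as stated. Take $\pu=\bi$, $\qu=\bj$. With the paper's (minus-sign) $\omega$, which you correctly read as the distance to the nearest point of the sphere $[\pu]$, you claim the rate $\lim_n|(\qu-\pu)^{*_Rn}|^{1/n}=\sigma(\bj,\bi)=\omega(\bj,\bi)=0$. But the evaluation identity for the $*_R$-product, $(f*_Rg)(\qu)=f\bigl(g(\qu)\,\qu\,g(\qu)^{-1}\bigr)\,g(\qu)$, together with $(\bj-\bi)\,\bj\,(\bj-\bi)^{-1}=-\bi$ (which lies on the slice of $\pu$), gives the closed form
$$(\qu-\bi)^{*_Rn}\Big|_{\qu=\bj}=(-2\bi)^{n-1}(\bj-\bi),\qquad \bigl|(\bj-\bi)^{*_Rn}\bigr|=2^{n-1}\sqrt{2},$$
as one checks directly for $n=2,3$: the polynomial $\qu^2-2\bi\qu-1$ takes the value $-2-2\bk$ at $\bj$, and $\qu^3-3\bi\qu^2-3\qu+\bi$ takes the value $4\bi-4\bj$. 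So the true exponential rate is $2=\widetilde\omega(\bj,\bi)$, the distance to the \emph{farthest} point of $[\pu]$, not $0$. This is the general phenomenon: iterating the evaluation identity writes $|(\qu-\pu)^{*_Rn}|$ as a product $\prod_{i=0}^{n-1}|\qu_i-\pu|$ with $\qu_i\in[\qu]$, and the orbit $\qu_i$ is driven to the point of $[\qu]$ farthest from $\pu$ (here it arrives after one step and stays). Consequently the step you flagged as the main obstacle, ``matching the two companion rates to $\omega$ and $\widetilde\omega$,'' cannot be carried out: the normal-function identity $|P^{*_Rn}(\qu)|\cdot|(P^c)^{*_Rn}(\qu')|=(\omega\widetilde\omega)^n$ is symmetric in its two factors and cannot force the small rate onto $P^{*_Rn}$; the correct matching is the opposite of the one you need.

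The underlying problem is a sign typo in the paper: in \cite{GSS} one has $\omega(\qu,\pu)=\sqrt{(\text{Re}(\qu)-\text{Re}(\pu))^2+(|\text{Im}(\qu)|+|\text{Im}(\pu)|)^2}$, with a plus. Under that definition the off-slice $\sigma$ \emph{is} the farthest-point distance, $\Sigma(\pu,R)$ is contained in the Euclidean ball, $\Omega(\pu,R)\subseteq\Sigma(\pu,R)$, and $\Omega(\pu,R)$ can be empty, which is why the last clause of the theorem is conditional; under your reading $\pu\in\Omega(\pu,R)$ always, so that hypothesis would be vacuous, and your inclusion $\Sigma(\pu,R)\subseteq\Omega(\pu,R)$ together with the final ``extension'' step get the geometry backwards (the last clause restricts the sum to the open axially symmetric set $\Omega\subseteq\Sigma$; no extension to a larger domain is involved). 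With the minus-sign definitions the theorem itself is false, e.g.\ $\sum_n(\qu-\bi)^{*_Rn}$ (so $a_n\equiv1$, $R=1$) diverges at $\bj\in\Sigma(\bi,1)$, so no proof could have succeeded. Your root-test skeleton (growth lemma, Weierstrass test on compacta, failure of the general term to vanish outside $\overline{\Sigma(\pu,R)}$, slicewise holomorphy of locally uniform limits) is the right shape and is essentially how \cite{GSS} argues; what must be repaired is the content of the lemma: the rate equals $|\qu-\pu|$ when $\qu$ lies on the slice of $\pu$, and equals the farthest-point distance $\widetilde\omega(\qu,\pu)$ otherwise, which is exactly the $\sigma$ of \cite{GSS}.
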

\begin{definition}\cite{ESR}
	$U\subseteq\quat$ is called a slice domain if it is a connected set whose intersection with every complex plane $\C_I$ is connected.
\end{definition}
\begin{proposition}\cite{ESR}(Maximum modulus principle) Let $U\subseteq\quat$ be a slice domain let $f:U\longrightarrow\quat$ be a slice regular function. If $|f|$ has a relative maximum at $\pu\in U$, then $f$ is a constant.
\end{proposition}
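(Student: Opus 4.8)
The plan is to reduce the quaternionic statement to the classical one–variable maximum principle by restricting $f$ to the single slice $\C_I$ through $\pu$, and then to propagate the resulting constancy from that slice to all of $U$ by the identity principle for slice regular functions. First I would fix the slice: if $\pu\in\R$ take any $I\in\mathbb{S}$, while if $\pu$ is non-real I write $\pu=x_0+y_0 I_0$ with $y_0>0$ (the uniqueness recalled above) and set $I=I_0$. In either case $\pu\in U\cap\C_I$, and since $U$ is open this intersection is an open subset of the plane $\C_I$. By Definition \ref{D2} the restriction $f_I=f|_{U\cap\C_I}$ satisfies $\overline{\partial}_I f_I=0$ on $U\cap\C_I$; choosing $J\in\mathbb{S}$ with $J\perp I$ and splitting $f_I=F+GJ$ with $F,G:U\cap\C_I\to\C_I$ (the splitting lemma, which is immediate from $\overline{\partial}_I f_I=0$), the single equation separates into two complex Cauchy--Riemann equations, so that $F$ and $G$ become holomorphic functions once $\C_I$ is identified with $\C$ via $I\leftrightarrow i$.

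Next I would run the classical maximum principle on the slice. Because $\{1,I,J,IJ\}$ is orthonormal, $|f_I|^2=|F|^2+|G|^2$, and the right-hand side, being a sum of squared moduli of holomorphic functions, is subharmonic on $U\cap\C_I$. The hypothesis that $|f|$ has a relative maximum at $\pu$ forces $|f_I|$ to have a relative maximum at the interior point $\pu$ of $U\cap\C_I$, so by the maximum principle for subharmonic functions $|f_I|^2$ is constant on a disc around $\pu$. Computing $\Delta(|F|^2+|G|^2)=4(|F'|^2+|G'|^2)$ and using that the left-hand side vanishes, I get $F'=G'=0$, whence $F$, $G$, and therefore $f_I$ are constant on that disc. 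Since $U$ is a \emph{slice domain}, $U\cap\C_I$ is connected, so the classical identity theorem upgrades this to: $F,G$, hence $f_I$, are constant on all of $U\cap\C_I$; write $f\equiv c:=f(\pu)$ there.

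Finally I would pass from the slice to the whole domain. The slice regular function $f$ agrees with the slice regular constant $c$ on the open set $U\cap\C_I\subseteq\C_I$, which has accumulation points; invoking the identity principle for slice regular functions on the slice domain $U$ then yields $f\equiv c$ on all of $U$, i.e. $f$ is constant.

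I expect the main obstacle to be precisely this last step, the propagation of constancy off the single plane $\C_I$ to the entire domain $U$. On one slice everything is ordinary one–variable complex analysis (subharmonicity, the complex maximum principle, the holomorphic identity theorem), but crossing between slices is genuinely quaternionic and rests on the slice identity principle; this is exactly where the slice–domain hypothesis is indispensable, since it both guarantees that each $U\cap\C_I$ is connected (so the classical results apply on the slice) and underlies the validity of the slice identity principle used to globalize.
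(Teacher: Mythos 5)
The paper itself contains no proof of this proposition: it is quoted, with the citation \cite{ESR}, directly from the literature. Your argument is essentially the standard proof given there --- restriction to the slice through $\pu$, the splitting lemma, subharmonicity of $|F|^2+|G|^2$ together with the Laplacian identity to force $f_I$ to be constant on the connected set $U\cap\C_I$, and then the identity principle for slice regular functions to globalize --- so the approach is the right one and the one-slice part of your argument is sound.

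Two caveats. The first is a convention slip: the paper's notion is \emph{right} slice regularity, $\partial_x f_I+(\partial_y f_I)I=0$, with $I$ acting on the right. With your splitting $f_I=F+GJ$ this separates into $\partial_x F+I\,\partial_y F=0$ and $\partial_x G-I\,\partial_y G=0$, so $F$ is holomorphic but $G$ is \emph{anti}-holomorphic; the sentence ``$F$ and $G$ become holomorphic'' is literally correct only for the left-regular convention of \cite{ESR}. This is harmless: either split as $f_I=F+JG$, for which both components do satisfy the Cauchy--Riemann equation, or keep $f_I=F+GJ$ and note that $|G|^2=|\overline{G}|^2$ is still the squared modulus of a holomorphic function, so the subharmonicity argument and the conclusion that $F$ and $G$ are constant on a disc are unaffected.

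The second caveat concerns the step you yourself single out as the crux. The identity principle holds for slice domains in the sense of \cite{ESR}, where the definition additionally requires $U\cap\R\neq\emptyset$: its proof propagates vanishing from the slice $\C_I$ to all other slices through the real points. The paper's definition of slice domain omits this requirement, and under that literal reading both the identity principle and the proposition itself fail. Indeed, take $U=B_\quat(\bi,1/2)$: it is connected, contains no real point, and meets each plane $\C_J$ in a disc or the empty set (so, on the paper's literal reading, it is a slice domain). The function $f(x+yI)=(1-I\bi)/2$, defined for $x+yI\in U$ with $y>0$ and $I\in\mathbb{S}$, is constant on every nonempty slice $U\cap\C_J$, hence slice regular, yet it is not constant on $U$; writing $I=a\bi+b\bj+c\bk$ one computes $|f(x+yI)|^2=(1+a)/2\le 1$, with equality exactly on $U\cap\C_{\bi}$, so $|f|$ attains a relative maximum at the interior point $\bi$. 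Thus your proof (and the statement) are correct precisely in the setting of the cited source, with slice domains required to meet the real axis --- a hypothesis worth making explicit, since without it no proof could succeed.
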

\begin{proposition}\cite{ESR}(Liouville) Let $f$ be a bounded entire function. Then $f$ is a constant.
\end{proposition}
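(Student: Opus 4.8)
The plan is to reduce the statement to the classical Cauchy estimates by passing to a single complex slice and exploiting the global power series expansion furnished by Theorem \ref{T5}. Since $f$ is entire it is, in particular, right regular on every ball $B(0,R)$, so by Theorem \ref{T5} it admits the representation
$$f(\qu)=\sum_{n=0}^{\infty}a_n\qu^n,\qquad a_n=\frac{1}{n!}\frac{\partial^n f}{\partial x^n}(0)\in\quat,$$
the series converging on all of $\quat$. Writing $M=\sup_{\qu\in\quat}|f(\qu)|<\infty$, it suffices to prove that $a_n=0$ for every $n\ge 1$, for then $f\equiv a_0$ is constant.

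Next I would fix an imaginary unit $I\in\mathbb{S}$ and restrict $f$ to the slice $\C_I=\mathbb{R}+I\mathbb{R}$. On this slice the series reads $f_I(z)=\sum_{n=0}^{\infty}a_n z^n$ for $z=x+yI\in\C_I$, and since the $\sigma$-radius of convergence is infinite the convergence is uniform on each circle $|z|=r$. Parametrising such a circle by $z=re^{I\theta}=r(\cos\theta+I\sin\theta)$, $\theta\in[0,2\pi]$, I would consider the quaternion-valued integral
$$\frac{1}{2\pi}\int_0^{2\pi}f_I\!\left(re^{I\theta}\right)e^{-In\theta}\,d\theta.$$
The crucial point is that all the exponential factors $e^{\pm Im\theta}$ lie in the commutative plane $\C_I$, so they multiply among themselves exactly as ordinary complex exponentials, while each coefficient $a_n$ is kept on the left. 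Integrating term by term and using $\frac{1}{2\pi}\int_0^{2\pi}e^{I(m-n)\theta}\,d\theta=\delta_{mn}$ then isolates the single term $a_n r^n$.

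Finally, since $|e^{-In\theta}|=1$ and $|f_I|\le M$ on the whole slice, the modulus inequality for vector-valued integrals gives $|a_n|\,r^n\le M$, hence $|a_n|\le M/r^n$ for every $r>0$; letting $r\to\infty$ forces $a_n=0$ for all $n\ge 1$, so $f$ is constant. The only delicate point is the bookkeeping forced by non-commutativity: one must check that, because the exponential factors all belong to the single commutative field $\C_I$ and the factor surviving after integration is the real number $2\pi\,\delta_{mn}$, the coefficient $a_n$ passes through the integral untouched. This is exactly what lets the classical Cauchy-estimate argument survive in the slice-regular setting. Alternatively, one could note that $f_I$ is a bounded entire holomorphic map of $z$ into $\quat$ viewed as a two-dimensional complex vector space over $\C_I$ and invoke the vector-valued Liouville theorem directly; but the Cauchy-estimate route keeps the argument internal to the tools already developed above.
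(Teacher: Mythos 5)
Your proof is correct. Note first that the paper itself contains no argument for this proposition: it is imported verbatim from \cite{ESR}, so the only comparison available is with the standard proof there, which runs through the splitting lemma (the restriction $f_I$ of $f$ to a slice $\C_I$ splits as $F_1+F_2J$ with $F_1,F_2:\C_I\to\C_I$ holomorphic, and the classical Liouville theorem applied to $F_1,F_2$ forces constancy) --- essentially the alternative you sketch in your closing sentence. Your main route instead redoes the Cauchy estimates by hand: expand $f(\qu)=\sum_n a_n\qu^n$ globally via Theorem \ref{T5}, restrict to a slice, multiply on the right by $e^{-In\theta}$, and use orthogonality of exponentials inside the commutative field $\C_I$. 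The non-commutative bookkeeping is handled correctly: the coefficients $a_n$ stay on the left, matching the convention of Theorem \ref{T5} (monomials $\phi\qu^n$ are the right slice-regular ones), the exponential factors all lie in $\C_I$ so they combine like ordinary complex exponentials, the surviving factor $2\pi\delta_{mn}$ is real and hence commutes with $a_n$, and the bound $|a_n|\,r^n\le M$ uses only the multiplicativity of the quaternionic norm (or $\|\phi\qu\|=\|\phi\|\,|\qu|$ in the vector-valued case) together with the triangle inequality for componentwise-defined integrals; letting $r\to\infty$ then kills every $a_n$ with $n\ge 1$. The one step you should make explicit is the term-by-term integration: it is legitimate because the series converges absolutely at radius $r$ (convergence at any larger radius forces $|a_m|r^m$ to decay geometrically), hence uniformly in $\theta$ on the compact circle $\{re^{I\theta}\}$. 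What your route buys is a proof internal to the tools the paper actually states --- the series expansion of Theorem \ref{T5} and slice restriction --- with no appeal to the splitting lemma, which the paper never introduces; what the \cite{ESR} route buys is brevity, since it delegates all the work to the classical one-variable theorem.
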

\begin{theorem}\label{G2.12}(Theorem 2.12, \cite{GSS}) Let $f$ be a right regular function on a domain $\Omega\subseteq\quat$ and let $\pu\in\Omega$. In
each $\sigma$-ball $\Sigma(\pu,R)$ contained in $\Omega$, the function $f$ expands as
$$f(\qu)=\sum_{n\in\N}\frac{1}{n!}f^{(n)}(\pu)(\qu-\pu)^{*_Rn}.$$
\end{theorem}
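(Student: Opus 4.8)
The plan is to show that the asserted series, with coefficients $a_n=\frac{1}{n!}f^{(n)}(\pu)$ (where $f^{(n)}=\partial_S^n f$, again right regular since the slice derivative of a regular function is regular), both converges on $\Sigma(\pu,R)$ and sums to $f$ there. If $\pu\in\R$, then the map $\qu\mapsto\qu-\pu$ is a real translation and preserves right regularity, so the statement is just Theorem \ref{T5} recentred at $\pu$. The substance is therefore the non-real case, which I would reduce to a single complex slice through $\pu$ and then globalise.

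So assume $\pu\notin\R$ and write $\pu=x_0+y_0I_0$ with $y_0>0$ and $I_0\in\mathbb S$. First I would analyse $f$ on the slice $\C_{I_0}$. By \eqref{rightslicereg} the restriction $f_{I_0}=f|_{\Omega\cap\C_{I_0}}$ is holomorphic in the single complex variable $z=x+yI_0$; since $\sigma(\qu,\pu)=|\qu-\pu|$ for points in the same plane, the set $\C_{I_0}\cap\Sigma(\pu,R)$ is the ordinary disc $\{z\in\C_{I_0}:|z-\pu|<R\}$, on which $f_{I_0}$ admits the classical Taylor expansion $f_{I_0}(z)=\sum_{n\in\N}c_n(z-\pu)^n$ with quaternionic coefficients $c_n$ on the left. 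Comparing the complex derivative $d/dz$ with the defining limit of $\partial_S$ along $L_{I_0}$ gives $c_n=\frac{1}{n!}\partial_S^nf(\pu)=a_n$, while Cauchy's estimate on this disc yields $\limsup_n|a_n|^{1/n}\le 1/R$. By Theorem \ref{G2.11} the series $g(\qu)=\sum_{n\in\N}a_n(\qu-\pu)^{*_Rn}$ then has $\sigma$-radius of convergence at least $R$ and defines a right regular function on $\Sigma(\pu,R)$. Restricting $g$ to $\C_{I_0}$, each linear factor $z-\pu$ lies in the commutative plane $\C_{I_0}$, so $(\qu-\pu)^{*_Rn}$ collapses to the ordinary power $(z-\pu)^n$ and $g_{I_0}(z)=\sum_n a_n(z-\pu)^n=f_{I_0}(z)$ on the disc. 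Thus $f$ and $g$ coincide on all of $\C_{I_0}\cap\Sigma(\pu,R)$.

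It remains to propagate the equality $f=g$ off the slice $\C_{I_0}$ to the whole of $\Sigma(\pu,R)$, and this is the step I expect to be the main obstacle. Since $\pu$ is non-real one cannot translate to a real centre and invoke Theorem \ref{T5}, and, crucially, the $\sigma$-ball $\Sigma(\pu,R)$ is \emph{not} axially symmetric: the disc $\C_{I_0}\cap\Sigma(\pu,R)$ is centred at the non-real point $\pu$, so the two conjugate points $x\pm yI_0$ needed to feed the usual representation formula are not both available, and that formula does not apply directly. The genuine technical content is therefore a uniqueness-of-regular-extension statement, namely that a right regular function on $\Sigma(\pu,R)$ is determined by its restriction to $\C_{I_0}\cap\Sigma(\pu,R)$; this is exactly the property that the $\sigma$-metric of Definition \ref{GD1} and Theorem \ref{G2.11} were engineered to guarantee, ensuring that the intersection of $\Sigma(\pu,R)$ with each plane $\C_I$ is connected and that the $*_R$-series is the unique regular extension of its slice data. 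Granting this extension/identity principle, the difference $f-g$, which vanishes on $\C_{I_0}\cap\Sigma(\pu,R)$, must vanish throughout $\Sigma(\pu,R)$, giving $f(\qu)=\sum_{n\in\N}\frac{1}{n!}f^{(n)}(\pu)(\qu-\pu)^{*_Rn}$ as claimed.
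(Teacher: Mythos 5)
This paper contains no proof of Theorem \ref{G2.12} to compare against: the result is quoted verbatim (up to left/right conventions) from \cite{GSS}, so your attempt has to stand on its own. Its first two steps do: the reduction to Theorem \ref{T5} for real $\pu$, and, for non-real $\pu=x_0+y_0I_0$, the slice analysis (holomorphy of $f_{I_0}$ on the disc $\Sigma(\pu,R)\cap\C_{I_0}$, identification of the Taylor coefficients with $\frac{1}{n!}\partial_S^nf(\pu)$, the Cauchy estimate $\limsup_n\|a_n\|^{1/n}\le 1/R$, convergence of the $*_R$-series via Theorem \ref{G2.11}, and the collapse of $(\qu-\pu)^{*_Rn}$ to ordinary powers on the commutative plane $\C_{I_0}$) are all correct, and give $f=g$ on $\Sigma(\pu,R)\cap\C_{I_0}$.

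The step you ``grant'' is, however, a genuine and fatal gap: the uniqueness principle you invoke --- that a right regular function on $\Sigma(\pu,R)$ is determined by its restriction to $\Sigma(\pu,R)\cap\C_{I_0}$ --- is not contained in Theorem \ref{G2.11} (which is a pure convergence statement with no uniqueness content), and it is in fact \emph{false}. Take $R\le y_0$, so that $\Sigma(\pu,R)\subseteq\Omega(\pu,R)\subseteq T:=\Omega(\pu,y_0)$ and $T\cap\R=\emptyset$. Every $\qu\in T$ has a unique expression $\qu=x+yJ$ with $y>0$, $J\in\mathbb{S}$; define
$$h(x+yJ)=\tfrac12\,(1+JI_0).$$
Then $h$ is real analytic on the domain $T$ (indeed $h(\qu)=\frac12\bigl(1+\text{Im}(\qu)\,|\text{Im}(\qu)|^{-1}I_0\bigr)$), and its restriction to each slice $T\cap\C_J$ is locally constant (equal to $\frac12(1+JI_0)$ on the component with $y>0$ and to $\frac12(1-JI_0)$ on the component with $y<0$), so \eqref{rightslicereg} holds trivially on every slice and $h$ is right regular on $T$. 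Since $R\le y_0$, the disc $\Sigma(\pu,R)\cap\C_{I_0}$ lies in the component where $J=I_0$, so there $h\equiv\frac12(1+I_0^2)=0$; yet $h(x+yJ)\ne 0$ at every point of $\Sigma(\pu,R)$ with $J\ne I_0$, because $JI_0=-1$ forces $J=I_0$. So a right regular function on (a domain containing) $\Sigma(\pu,R)$ can vanish on the slice disc without vanishing on $\Sigma(\pu,R)$: no identity principle of the form you need exists there. The structural reason is the one you half-noticed: for $R\le y_0$ the set $\Sigma(\pu,R)$ misses $\R$ and its intersections with the planes $\C_I$, $I\ne\pm I_0$, are disconnected pairs of discs, so it is not a slice domain, which is precisely the setting where the identity principle for slice regular functions breaks down. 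Worse, all slice derivatives $h^{(n)}(\pu)$ vanish (they are computed along $\C_{I_0}$, where $h\equiv0$ near $\pu$), so $h$ even violates the conclusion of the theorem as transcribed in this paper with $\Omega=T$: the statement is only true under the hypotheses of the original source (where $f$ lives on a slice domain), and any correct proof must use exactly what your argument never touches --- the values of $f$ off the slice $\C_{I_0}$ and the global geometry of $\Omega$, which in \cite{GSS} enter through $*$-product estimates and the structure of regular functions on the spheres $x+y\mathbb{S}$. Since your outline uses only $f|_{\C_{I_0}}$ plus regularity on $\Sigma(\pu,R)$, and that data provably does not determine $f$, the proposal cannot be completed as it stands.
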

Let $U\subseteq\quat$ be an open subset. Denote
$$H(U,\vr)=\{f:U\longrightarrow\vr~~|~~f~~\text{ is continuous and right slice-regular}\}.$$
On $H(U,\vr)$ define $\displaystyle d(f,g)=\sum_{n=1}^{\infty}\frac{1}{2^n}\frac{\|f-g\|_n}{1+\|f-g\|_n}$ for all $f,g\in H(U,\vr)$, where $\|f\|_n=\sup\{\|f(\qu)\|~~|~~\qu\in K_n\}$, and $\{K_n\}_{n\in\N}$ is a sequence of compact subsets of $U$ for which $K_n\subseteq{\text{int}}K_{n+1}$, the interior of $K_{n+1}$, for all $n\in\N$ and $\displaystyle\bigcup_{n\in\N}K_n=U$.
\begin{proposition}\label{H0}
	The space $H(U,\vr)$ is a quaternionic left linear vector space with respect to point-wise vector space operations.
\end{proposition}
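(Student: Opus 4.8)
The plan is to put on $H(U,\vr)$ the pointwise operations $(f+g)(\qu)=f(\qu)+g(\qu)$ and $(\mathfrak{a}f)(\qu)=\mathfrak{a}\,f(\qu)$, for $f,g\in H(U,\vr)$ and $\mathfrak{a}\in\quat$, to show that $H(U,\vr)$ is closed under them, and then to read off the left quaternionic module axioms pointwise from $\vr$. The product $\mathfrak{a}\,f(\qu)$ refers to the left multiplication that $\vr$ carries once a Hilbert basis $\mathcal{O}=\{\varphi_k\mid k\in N\}$ is fixed, namely $\mathfrak{a}\phi:=\sum_{k\in N}\varphi_k\,\mathfrak{a}\langle\varphi_k\mid\phi\rangle_{\vr}$. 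Before anything else I would record two properties of this action. First, it is bounded: since $\mathfrak{a}\phi$ has basis coefficients $\mathfrak{a}\langle\varphi_k\mid\phi\rangle_{\vr}$, the Parseval identity $\|\phi\|_{\vr}^2=\sum_{k\in N}|\langle\varphi_k\mid\phi\rangle_{\vr}|^2$ gives $\|\mathfrak{a}\phi\|_{\vr}=|\mathfrak{a}|\,\|\phi\|_{\vr}$. Second, it commutes with the right action, $\mathfrak{a}(\phi\mathfrak{p})=(\mathfrak{a}\phi)\mathfrak{p}$ for all $\mathfrak{p}\in\quat$, because both sides expand to $\sum_{k\in N}\varphi_k\,\mathfrak{a}\langle\varphi_k\mid\phi\rangle_{\vr}\mathfrak{p}$; thus $\vr$ is two-sided.

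Closure under addition is the easy half. Continuity of $f+g$ is clear, and since the operator $\overline{\partial}_I$ of \eqref{rightslicereg} is $\mathbb{R}$-linear in its argument we get $\overline{\partial}_I(f+g)=\overline{\partial}_If+\overline{\partial}_Ig=0$ on every slice $\C_I$, so $f+g$ is again right slice-regular; this is the observation after Definition \ref{D2} that regularity respects addition.

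The substantive step is closure under the left scalar action, i.e.\ $\mathfrak{a}f\in H(U,\vr)$. Continuity holds because $\mathfrak{a}f$ is the composition of the continuous $f$ with the bounded (hence continuous) left multiplication by $\mathfrak{a}$. For regularity I would compute $\overline{\partial}_I(\mathfrak{a}f)$ on a fixed slice $\C_I$: writing $(\mathfrak{a}f)_I=L_{\mathfrak{a}}\circ f_I$ with $L_{\mathfrak{a}}$ the fixed, bounded, $\mathbb{R}$-linear left action, the real partials commute with $L_{\mathfrak{a}}$, so $\partial_x(\mathfrak{a}f)_I=\mathfrak{a}\,\partial_xf_I$ and $\partial_y(\mathfrak{a}f)_I=\mathfrak{a}\,\partial_yf_I$. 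Hence
$$\overline{\partial}_I(\mathfrak{a}f)=\tfrac{1}{2}\Big(\mathfrak{a}\,\partial_xf_I+(\mathfrak{a}\,\partial_yf_I)\,I\Big)=\mathfrak{a}\cdot\tfrac{1}{2}\Big(\partial_xf_I+(\partial_yf_I)\,I\Big)=\mathfrak{a}\,\overline{\partial}_If=0,$$
where the middle equality is exactly the commutation $(\mathfrak{a}v)I=\mathfrak{a}(vI)$. I expect this to be the main obstacle: the unit $I$ enters $\overline{\partial}_I$ as a right multiplication on the values of $f$, so pulling the left scalar $\mathfrak{a}$ through the operator needs precisely the two-sided compatibility recorded above, and without it left multiplication would not in general preserve right slice-regularity.

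It remains to check the left quaternionic vector space axioms: associativity and commutativity of addition, the zero function and additive inverses $-f$, and the module identities $\mathfrak{a}(\mathfrak{b}f)=(\mathfrak{a}\mathfrak{b})f$, $(\mathfrak{a}+\mathfrak{b})f=\mathfrak{a}f+\mathfrak{b}f$, $\mathfrak{a}(f+g)=\mathfrak{a}f+\mathfrak{a}g$, and $1f=f$. Each of these I would verify by evaluating both sides at an arbitrary $\qu\in U$ and invoking the corresponding axiom in $\vr$ regarded as a left quaternionic space, the two closure statements above ensuring that both sides again belong to $H(U,\vr)$.
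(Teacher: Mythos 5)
Your proof is correct, and it is worth noting that the paper itself offers no argument for this proposition: its entire proof is the line ``See remark 4.1.2 in \cite{NFC}''. Your verification therefore supplies exactly the content the paper outsources to a citation, and it identifies the right pressure point. Closure under addition is immediate from the additivity of $\overline{\partial}_I$; the genuinely quaternionic step is closure under the left scalar action, and your treatment of it is the correct one: you use the left multiplication on $\vr$ induced by a fixed Hilbert basis (the two-sided structure the paper sets up in Section 2), record that it is isometric and commutes with the right action, and then pull $\mathfrak{a}$ through the right multiplication by $I$ sitting inside $\overline{\partial}_I$ via the identity $(\mathfrak{a}v)I=\mathfrak{a}(vI)$. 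That commutation is precisely what makes the computation $\overline{\partial}_I(\mathfrak{a}f)=\mathfrak{a}\,\overline{\partial}_I f=0$ legitimate, and it also explains why $H(U,\vr)$ is only a \emph{left} quaternionic space: a right action $(f\mathfrak{a})(\qu)=f(\qu)\mathfrak{a}$ would not pass through $\overline{\partial}_I$, since $\mathfrak{a}$ and $I$ need not commute. The remaining module axioms are indeed pointwise consequences of the corresponding identities in $\vr$ (e.g.\ $\mathfrak{a}(\mathfrak{b}\phi)=(\mathfrak{a}\mathfrak{b})\phi$ follows from orthonormality of the basis). What your write-up buys over the paper is self-containedness and an explicit isolation of where two-sidedness is used; the one caveat worth stating, which the paper's citation leaves equally implicit, is that the left linear structure so obtained depends on the Hilbert basis chosen to define the left action on $\vr$.
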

\begin{proof}
See remark 4.1.2 in \cite{NFC}.	
\end{proof}
\begin{proposition}\label{H1}
	$d(f,g)$ is a translation invariant metric on $H(U,\vr)$.
\end{proposition}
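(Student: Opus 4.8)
The plan is to verify the three standard metric axioms for the function
$$d(f,g)=\sum_{n=1}^{\infty}\frac{1}{2^n}\frac{\|f-g\|_n}{1+\|f-g\|_n},$$
together with translation invariance, where translation means adding a fixed $h\in H(U,\vr)$ to both arguments. First I would observe that the series converges for all $f,g$: each summand is bounded by $2^{-n}$ because $t\mapsto t/(1+t)$ maps $[0,\infty)$ into $[0,1)$, so $d$ is a well-defined nonnegative real number and in fact $d(f,g)<1$. Symmetry $d(f,g)=d(g,f)$ is immediate since $\|f-g\|_n=\|g-f\|_n$ by the properties of the norm on $\vr$. For the positive-definiteness, $d(f,g)=0$ forces every term to vanish, hence $\|f-g\|_n=0$ for all $n$; since $\bigcup_{n}K_n=U$, every $\qu\in U$ lies in some $K_n$, so $\|f(\qu)-g(\qu)\|=0$ for all $\qu\in U$, giving $f=g$. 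The converse is trivial.

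The only axiom requiring real work is the triangle inequality. The key technical lemma is that the scalar function $\varphi(t)=t/(1+t)$ is nondecreasing on $[0,\infty)$ and subadditive, i.e. $\varphi(a+b)\le\varphi(a)+\varphi(b)$ for $a,b\ge 0$. Monotonicity follows from $\varphi'(t)=1/(1+t)^2>0$; subadditivity follows by writing $\varphi(a+b)=\frac{a+b}{1+a+b}=\frac{a}{1+a+b}+\frac{b}{1+a+b}\le\frac{a}{1+a}+\frac{b}{1+b}$. Granting this, for each fixed $n$ the triangle inequality $\|f-g\|_n\le\|f-h\|_n+\|h-g\|_n$ (which holds because $\|\cdot\|_n$ is a seminorm, being a supremum of the norm on the compact set $K_n$) combines with monotonicity and subadditivity of $\varphi$ to yield
$$\frac{\|f-g\|_n}{1+\|f-g\|_n}\le\frac{\|f-h\|_n}{1+\|f-h\|_n}+\frac{\|h-g\|_n}{1+\|h-g\|_n}.$$
Multiplying by $2^{-n}$ and summing over $n$ gives $d(f,g)\le d(f,h)+d(h,g)$.

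For translation invariance I would simply note that for any $h\in H(U,\vr)$ one has $(f+h)-(g+h)=f-g$ pointwise on $U$, hence $\|(f+h)-(g+h)\|_n=\|f-g\|_n$ for every $n$, and therefore $d(f+h,g+h)=d(f,g)$ term by term. This uses only that $H(U,\vr)$ is a left linear space (Proposition \ref{H0}), so that $f+h,g+h$ again lie in $H(U,\vr)$ and the expression $d(f+h,g+h)$ is meaningful.

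The main obstacle, such as it is, is isolating and proving the scalar inequality for $\varphi(t)=t/(1+t)$ — specifically its subadditivity, since everything else reduces to termwise manipulation of a dominated convergent series. Once that elementary real-analysis fact is in hand, the proof is entirely routine, and I expect the author's argument to follow exactly this termwise scheme.
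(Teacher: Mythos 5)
Your proof is correct and is precisely the routine verification the paper has in mind: the paper's own ``proof'' simply asserts that checking the metric axioms is straightforward and that translation invariance is obvious, so your argument (subadditivity and monotonicity of $t\mapsto t/(1+t)$ for the triangle inequality, $\bigcup_n K_n = U$ for definiteness, and $(f+h)-(g+h)=f-g$ for invariance) fills in exactly the omitted details. No discrepancy in approach or substance.
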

\begin{proof}
	It is straightforward to check that $d$ is a metric and the equality $d(f+h, g+h)=d(f,g)$ is obvious for all $f,g,h\in H(U,\vr)$.
\end{proof}	
\begin{proposition}\label{Unireg}
Let $\{f_n\}$ be a sequence of right regular functions defined on $U$ and  converging uniformly to a function $f$ on the compact subsets of $U$. Then $f$ is a right regular function.
\end{proposition}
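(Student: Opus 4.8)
The plan is to reduce the statement, one slice at a time, to the classical Weierstrass convergence theorem for (vector-valued) holomorphic functions. First note that, being a uniform limit on the compact subsets of $U$ of the continuous functions $f_n$, the limit $f$ is itself continuous on $U$. Fix an imaginary unit $I\in\mathbb{S}$ and regard $\vr$ as a complex Banach space whose complex structure is right multiplication by $I$, so that $v\mapsto vI$ plays the role of multiplication by the imaginary unit. With this identification, condition \eqref{rightslicereg} of Definition \ref{D2} says precisely that the restriction $f_I=f|_{U\cap\C_I}$ is holomorphic in the usual (vector-valued) sense on the planar open set $U\cap\C_I\cong\mathbb{C}$. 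Thus it suffices to prove that, for every $I\in\mathbb{S}$, the restriction $f_I$ is holomorphic and has continuous partial derivatives.

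Next I would work on the fixed slice. The uniform convergence of $f_n\to f$ on the compact subsets of $U$ restricts to uniform convergence of $(f_n)_I\to f_I$ on the compact subsets of $U\cap\C_I$, and each $(f_n)_I$ is a $\vr$-valued holomorphic function for the complex structure above. Choosing a closed disk $\overline{D}\subset U\cap\C_I$ and writing the vector-valued Cauchy integral formula for $(f_n)_I$ along $\partial D$, I would pass to the limit under the integral, which is legitimate because the convergence is uniform on the compact contour $\partial D$, to obtain the same Cauchy representation for $f_I$. A function given by such a Cauchy integral is holomorphic, so $\overline{\partial}_I f=0$ on $U\cap\C_I$; equivalently $f_I$ satisfies \eqref{rightslicereg}. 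Applying the Cauchy estimates to the differences $(f_m-f_n)_I$ further yields uniform convergence on compacts of the slice partial derivatives $\partial_x(f_n)_I$ and $\partial_y(f_n)_I$, so that $f_I$ has continuous partial derivatives, matching the remaining slice requirements of Definition \ref{D2}. Since $I\in\mathbb{S}$ was arbitrary, the slice condition holds on every slice.

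The main obstacle is the clause of Definition \ref{D2} requiring $f$ to be real differentiable on $U\subseteq\quat\cong\mathbb{R}^4$ as a whole, and not merely along each slice: the slice Cauchy estimates of the previous step only control the derivatives $\partial_x,\partial_y$ tangent to a fixed $\C_I$, and not those in the transverse imaginary directions. To bridge this gap I would invoke the structure (representation) formula for right slice-regular functions, which expresses the dependence of a slice-regular function on the imaginary unit $I$ in an affine fashion from data on a single slice; combined with the slice-wise smoothness already obtained, this promotes the slice estimates to genuine joint real differentiability, with $\partial_{x_j}f=\lim_n\partial_{x_j}f_n$ for $j=0,1,2,3$. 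The delicate point throughout is the interaction of the transverse differentiability with the right, hence non-commutative, action of the scalars; once it is settled, $f$ meets every requirement of Definition \ref{D2} and is therefore right regular.
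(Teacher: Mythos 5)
Your first two paragraphs are correct, and they essentially reproduce the argument that the paper itself relies on: the paper's proof of this proposition is a one-line appeal to Proposition 4.4 of \cite{ESR}, which is proved by exactly the slice-by-slice Weierstrass scheme you describe (identify $\overline{\partial}_I$-closedness with holomorphy for the complex structure $v\mapsto vI$, write the vector-valued Cauchy formula on disks in $U\cap\C_I$, pass to the limit under the integral, and use Cauchy estimates for the slice derivatives). Up to the statement that every restriction $f_I$ is holomorphic, with locally uniform convergence of the slice derivatives, your proposal is sound.

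The genuine gap is your last step, and it cannot be closed in the stated generality. The representation (structure) formula is a theorem about slice regular functions on \emph{axially symmetric slice domains}: expressing $f_n(x+yJ)$ through $f_n(x\pm yI)$ presupposes that the points $x\pm yI$ lie in the domain, whereas here $U$ is an arbitrary open set (the proposition is used to make $H(U,\vr)$ a Fr\'echet space for every open $U$; a small ball centred at $\bi$ is neither axially symmetric nor a slice domain). So the formula is not available even for the $f_n$, which are only assumed regular on $U$. Worse, the conclusion you want at this step is actually false on such $U$, so no substitute argument exists: fix a unit vector $\xi\in\vr$, let $w:[-1,1]\to\mathbb{R}$ be continuous and nowhere differentiable, let $w_n$ be polynomials converging to $w$ uniformly, and on $U=\{\qu\in\quat:|\qu-\bi|<1/10\}$ set $f_n(\qu)=\xi\,w_n(x_1/\rho)$ and $f(\qu)=\xi\,w(x_1/\rho)$, where $\qu=x_0+x_1\bi+x_2\bj+x_3\bk$ and $\rho=|\mathrm{Im}(\qu)|$. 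One checks that $x_1/\rho$ is constant on each (connected) slice $U\cap\C_I$, so every $f_n$ is smooth on $U$ and slice-wise constant, hence right slice regular in the sense of Definition \ref{D2}, and $f_n\to f$ uniformly on all of $U$. Yet at any point of $U$ with $(x_2,x_3)\neq(0,0)$, the restriction of $f$ to the line in the $x_1$-direction is $w$ composed with a smooth function having nonvanishing derivative, so $\partial f/\partial x_1$ does not exist there: the limit satisfies all the slice-wise requirements but is not real differentiable, hence not right regular in the sense of Definition \ref{D2}. The statement (and your proof) can therefore only be saved by either restricting $U$ to be an axially symmetric slice domain, as in \cite{ESR} and \cite{GSS}, where your representation-formula idea does work, or by weakening Definition \ref{D2} to the purely slice-wise notion of regularity, in which case your first two paragraphs already constitute a complete proof. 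The paper inherits the same difficulty by invoking \cite{ESR} outside its hypotheses; your write-up has the merit of making the issue explicit, but as it stands the final step is a gap, not a proof.
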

\begin{proof}
We can prove $f$ is right regular by making the same argument of the proof of Proposition 4.4 in \cite{ESR}.
\end{proof}
\begin{proposition}\label{H2}
	$H(U,\vr)$ is a Fr\'echet space with respect to the point-wise vector space operations and the topology of locally uniformly convergence induced by $d(f,g)$.
\end{proposition}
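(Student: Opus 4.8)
The plan is to verify the three ingredients in the definition of a Fr\'echet space, read in the quaternionic left-linear category: a complete, metrizable, locally convex quaternionic topological vector space. Propositions \ref{H0} and \ref{H1} already supply the left linear structure and the translation-invariant metric $d$, and local convexity is automatic because $d$ is assembled from the family of seminorms $\|\cdot\|_n$ and any seminorm topology is locally convex. Thus three tasks remain: to identify the $d$-topology with the topology of locally uniform convergence, to check that addition and left scalar multiplication are $d$-continuous, and to establish completeness.

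For the topology, I would first show that $d(f_m,f)\to 0$ if and only if $\|f_m-f\|_n\to 0$ for every $n$. Since $t\mapsto \frac{t}{1+t}$ is increasing and bounded by $1$, truncating the defining series after finitely many terms yields the standard two-sided $\eps/2^n$ comparison between $d$ and the individual seminorms. Because $\{K_n\}$ exhausts $U$ with $K_n\subseteq\text{int}\,K_{n+1}$, every compact $K\subseteq U$ lies in some $K_n$; hence convergence in all the seminorms $\|\cdot\|_n$ is precisely uniform convergence on the compact subsets of $U$.

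Continuity of the operations follows from the seminorm estimates. Translation invariance (Proposition \ref{H1}) reduces continuity of $+$ to continuity at $0$, which is immediate from the subadditivity $\|f+g\|_n\le\|f\|_n+\|g\|_n$. For the left action $(\qu,f)\mapsto \qu f$ I would use the identity $\|\qu f\|_n=|\qu|\,\|f\|_n$: once a Hilbert basis $\{\varphi_k\}$ of $\vr$ is fixed, the left multiplication acts on coordinates as $\qu\big(\sum_k\varphi_k c_k\big)=\sum_k\varphi_k(\qu c_k)$, so $\|\qu\phi\|=|\qu|\,\|\phi\|$ pointwise in $\vr$, and the identity for $\|\cdot\|_n$ follows by taking the supremum over $K_n$. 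Joint continuity in $(\qu,f)$ is then a routine triangle-inequality split $\|\qu f-\qu_0 f_0\|_n\le |\qu|\,\|f-f_0\|_n+|\qu-\qu_0|\,\|f_0\|_n$.

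The substantive step is completeness. Let $\{f_m\}$ be $d$-Cauchy. By the first step it is uniformly Cauchy on each $K_n$; since $\vr$ is complete, $\{f_m\}$ converges uniformly on each $K_n$, hence on every compact subset of $U$, to some $f:U\to\vr$. As a locally uniform limit of continuous functions $f$ is continuous, and Proposition \ref{Unireg} guarantees that $f$ is right slice-regular, so $f\in H(U,\vr)$; the same seminorm convergence then gives $d(f_m,f)\to 0$. The argument mirrors the classical proof that the holomorphic functions on an open set form a Fr\'echet space, so I do not expect a genuine obstacle in completeness, the only quaternionic input being Proposition \ref{Unireg}, which keeps the limit inside the space. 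The one point demanding care is the coordinate description of the left action underlying the isometry $\|\qu f\|_n=|\qu|\,\|f\|_n$, since it is exactly the interplay between the right structure of $\vr$ and the fixed-basis left action that makes scalar multiplication continuous.
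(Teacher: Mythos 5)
Your proof is correct and follows essentially the same route as the paper, whose proof is just the one-line observation that the result follows from Propositions \ref{H0}, \ref{H1} and \ref{Unireg}; you invoke exactly those three ingredients (left linear structure, translation-invariant metric built from the seminorms $\|\cdot\|_n$, and preservation of right slice-regularity under locally uniform limits) and simply fill in the details the paper labels ``straightforward,'' including the identification of the $d$-topology with locally uniform convergence and the completeness argument. The extra care you take with continuity of the fixed-basis left scalar action, via $\|\qu f\|_n=|\qu|\,\|f\|_n$, is a worthwhile elaboration but not a departure from the paper's approach.
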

\begin{proof}
The proof is straightforward from the Propositions \ref{H0}, \ref{H1} and \ref{Unireg}. 		
\end{proof}
Proposition \ref{H2} means: if $\{f_n\}\subseteq H(U,\vr)$ such that $f_n\longrightarrow 0$ as $n\rightarrow\infty$ in the topology of $H(U,\vr)$ precisely when $\{f_n\}$ converges uniformly to zero on each compact subset of $U$.

\section{Right quaternionic linear  operators and some basic properties}
In this section we shall define right  $\quat$-linear operators and recall some basis properties as needed for the development of this manuscript. Most of them are very well known. In this manuscript, we follow the notations in \cite{AC} and \cite{ghimorper}. 
\begin{definition}
A mapping $A:\D(A)\subseteq V_{\quat}^R \longrightarrow U_{\quat}^R$, where $\D(A)$ stands for the domain of $A$, is said to be right $\quat$-linear operator or, for simplicity, right linear operator, if
$$A(\phi\bfraka+\psi\bfrakb)=(A\phi)\bfraka+(A\psi)\bfrakb,~~\mbox{~if~}~~\phi,\,\psi\in \D(A)~~\mbox{~and~}~~\bfraka,\bfrakb\in\quat.$$
\end{definition}
The set of all right linear operators from $V_{\quat}^{R}$ to $U_{\quat}^{R}$ will be denoted by $\mathcal{L}(V_{\quat}^{R},U_{\quat}^{R})$ and the identity linear operator on $V_{\quat}^{R}$ will be denoted by $\Iop$. For a given $A\in \mathcal{L}(V_{\quat}^{R},U_{\quat}^{R})$, the range and the kernel will be
\begin{eqnarray*}
\text{ran}(A)&=&\{\psi \in U_{\quat}^{R}~|~A\phi =\psi \quad\text{for}~~\phi \in\D(A)\}\\
\ker(A)&=&\{\phi \in\D(A)~|~A\phi =0\}.
\end{eqnarray*}
We call an operator $A\in \mathcal{L}(V_{\quat}^{R},U_{\quat}^{R})$ bounded if
\begin{equation}\label{PE1}
\|A\|=\sup_{\|\phi \|_{\vr}=1}\|A\phi \|_{\ur}<\infty,
\end{equation}
or equivalently, there exist $K\geq 0$ such that $\|A\phi \|_{\ur}\leq K\|\phi \|_{\vr}$ for all $\phi \in\D(A)$. The set of all bounded right linear operators from $V_{\quat}^{R}$ to $U_{\quat}^{R}$ will be denoted by $\B(V_{\quat}^{R},U_{\quat}^{R})$. Set of all  invertible bounded right linear operators from $V_{\quat}^{R}$ to $U_{\quat}^{R}$ will be denoted by $\mathcal{G} (V_{\quat}^{R},U_{\quat}^{R})$. We also denote for a set $\Delta\subseteq\quat$, $\Delta^*=\{\oqu~|~\qu\in\Delta\}$.
\\
Assume that $V_{\quat}^{R}$ is a right quaternionic Hilbert space, $A$ is a right linear operator acting on it.
Then, there exists a unique linear operator $A^{\dagger}$ such that
\begin{equation}\label{Ad1}
\langle \psi \mid A\phi \rangle_{\ur}=\langle A^{\dagger} \psi \mid\phi \rangle_{\vr};\quad\text{for all}~~~\phi \in \D (A), \psi\in\D(A^\dagger),
\end{equation}
where the domain $\D(A^\dagger)$ of $A^\dagger$ is defined by
$$
\D(A^\dagger)=\{\psi\in U_{\quat}^{R}\ |\ \exists \varphi\ {\rm such\ that\ } \langle \psi \mid A\phi \rangle_{\ur}=\langle \varphi \mid\phi \rangle_{\vr}\}.$$
The following theorem gives two important and fundamental results about right $\quat$-linear bounded operators which are already appeared in \cite{ghimorper} for the case of $\vr=\ur$. Point (b) of the following theorem is known as the open mapping theorem.
\begin{theorem}\cite{Fr}\label{open} Let $A:\D(A)\subseteq V_{\quat}^R \longrightarrow U_{\quat}^R$ be a right $\quat$-linear operator. Then
\begin{itemize} 
\item[(a)] $A\in\B(\vr,\ur)$ if and only if $A$ is continuous.
\item[(b)] if $A\in\B(\vr,\ur)$ is surjective, then $A$ is open. In particular, if $A$ is bijective then $A^{-1}\in\B(\vr,\ur)$.
\end{itemize}
\end{theorem}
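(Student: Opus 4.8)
The plan is to treat (a) by direct norm estimates and (b) by transcribing the classical Baire-category proof of the open mapping theorem, checking at each step that only \emph{real} (hence central) scalars are multiplied in, so that right $\quat$-linearity suffices and non-commutativity never interferes; write $B_{\vr}(r)$ and $B_{\ur}(r)$ for the open balls of radius $r$ about the origin in $\vr$ and $\ur$. For (a): if $A\in\B(\vr,\ur)$ then $\|A\phi-A\psi\|_{\ur}=\|A(\phi-\psi)\|_{\ur}\le\|A\|\,\|\phi-\psi\|_{\vr}$, so $A$ is Lipschitz, hence continuous. Conversely, if $A$ is continuous it is continuous at $0$, so there is $\delta>0$ with $\|A\phi\|_{\ur}\le 1$ whenever $\|\phi\|_{\vr}\le\delta$; for $\phi\ne 0$ put $\psi=\phi\,\delta\|\phi\|_{\vr}^{-1}$, and since $\delta\|\phi\|_{\vr}^{-1}$ is a positive real, right linearity gives $A\psi=(A\phi)\,\delta\|\phi\|_{\vr}^{-1}$ and $\|\psi\|_{\vr}=\delta$, whence $\|A\phi\|_{\ur}\le\delta^{-1}\|\phi\|_{\vr}$ and $A\in\B(\vr,\ur)$.

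For (b), first I would use surjectivity to write $\ur=\bigcup_{n\in\N}\overline{A(B_{\vr}(n))}$; as $\ur$ is a complete metric space, Baire's theorem gives an $n$ for which $\overline{A(B_{\vr}(n))}$ has non-empty interior, and scaling by the real $n^{-1}$ moves this to $\overline{A(B_{\vr}(1))}$. Since $B_{\vr}(1)$ is symmetric (multiply by $-1\in\R$) and convex and $A$ is linear, $A(B_{\vr}(1))$ is convex and symmetric, so a midpoint argument yields $\eta>0$ with $B_{\ur}(\eta)\subseteq\overline{A(B_{\vr}(1))}$. The crux is to remove the closure: real rescaling gives $B_{\ur}(\eta 2^{-k})\subseteq\overline{A(B_{\vr}(2^{-k}))}$ for every $k$, and for $y$ with $\|y\|_{\ur}<\eta$ I would build inductively $x_k$ with $\|x_k\|_{\vr}<2^{-(k-1)}$ and $\|y-A(x_1+\cdots+x_k)\|_{\ur}<\eta 2^{-k}$. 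Because $\sum_k\|x_k\|_{\vr}<2$ and $\vr$ is complete, the series $x=\sum_k x_k$ converges with $\|x\|_{\vr}\le 2$, and continuity of $A$ from (a) forces $Ax=y$; after a final real rescaling this gives $A(B_{\vr}(1))\supseteq B_{\ur}(c)$ for some $c>0$.

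Openness now follows: for open $U\subseteq\vr$ and $\phi\in U$ choose $\rho$ with $\phi+B_{\vr}(\rho)\subseteq U$; linearity and real scaling give $A(\phi+B_{\vr}(\rho))=A\phi+A(B_{\vr}(\rho))\supseteq A\phi+B_{\ur}(c\rho)$, an open neighbourhood of $A\phi$, so $A(U)$ is open. In the bijective case $A^{-1}$ is right linear and openness of $A$ makes $A^{-1}$ continuous, hence bounded by (a). The \emph{main obstacle} is bookkeeping rather than a genuine new difficulty: one must confirm that every scaling, symmetrization and convexity step uses only real scalars, which lie in the centre of $\quat$ and commute past the right action, and one must invoke completeness of $\vr$ exactly where the absolutely convergent correction series $\sum_k x_k$ is summed. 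All the functional-analytic inputs — Baire's theorem, convexity of balls under the genuine real-valued norm, and absolute-convergence-implies-convergence in a Banach space — hold verbatim for quaternionic Hilbert (Banach) spaces, so the proof transcribes with only notational changes.
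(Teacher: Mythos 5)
Your proof is correct. The paper itself gives no proof of this theorem---it is quoted with a citation to \cite{Fr}---and your argument is the canonical one: part (a) by the Lipschitz estimate and scaling by the positive real $\delta\|\phi\|_{\vr}^{-1}$, and part (b) by the classical Baire-category proof (surjectivity plus Baire to get a ball inside $\overline{A(B_{\vr}(1))}$, symmetry/convexity to center it at the origin, the geometric-series correction argument using completeness of $\vr$ to remove the closure, and then openness and boundedness of $A^{-1}$). Your bookkeeping point is exactly the right one to check: every scalar introduced is real, hence central in $\quat$, so right $\quat$-linearity suffices and the complex/real proof transcribes verbatim; this is essentially the same route taken in the cited source.
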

The following proposition provides some useful aspects about the orthogonal complement subsets.
\begin{proposition}\cite{Fr}\label{ort}
Let $M\subseteq\vr$. Then
\begin{itemize}
\item [(a)] $M^{^\perp}$ is closed.
\item [(b)] if $M$ is a closed subspace of $\vr$ then $\vr=M\oplus M^\perp$.
\item [(c)] if $\dim(M)<\infty$, then $M$ is a closed subspace.
\end{itemize}
\end{proposition}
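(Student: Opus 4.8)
The plan is to adapt the standard complex Hilbert-space arguments, paying attention to the quaternionic scalar conventions (iv)--(v) of the inner product.

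For (a), I would note that for each fixed $m\in M$ the map $\phi\mapsto\langle m\mid\phi\rangle_{\vr}$ is continuous, since by Cauchy--Schwarz $|\langle m\mid\phi\rangle_{\vr}-\langle m\mid\psi\rangle_{\vr}|=|\langle m\mid\phi-\psi\rangle_{\vr}|\le\|m\|_{\vr}\,\|\phi-\psi\|_{\vr}$. Hence each set $\{\phi\in\vr:\langle m\mid\phi\rangle_{\vr}=0\}$ is closed, and $M^\perp=\bigcap_{m\in M}\{\phi:\langle m\mid\phi\rangle_{\vr}=0\}$ is closed as an intersection of closed sets. For (c), a finite-dimensional subspace $M$ is complete, being linearly isometric to $\quat^{\dim M}$ with a Euclidean-type norm, which is complete; and any complete subset of $\vr$ is closed. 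Alternatively one can produce an orthonormal basis of $M$ by Gram--Schmidt and argue coordinatewise.

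The substantive part is (b). First I would fix $\phi\in\vr$ and set $d=\inf_{m\in M}\|\phi-m\|_{\vr}$. For a minimizing sequence $(m_n)$, the parallelogram law---which holds here because $\|x\pm y\|_{\vr}^2=\|x\|_{\vr}^2\pm 2\,\text{Re}\langle x\mid y\rangle_{\vr}+\|y\|_{\vr}^2$ by (i)---gives $\|m_n-m_k\|_{\vr}^2\le 2\|m_n-\phi\|_{\vr}^2+2\|m_k-\phi\|_{\vr}^2-4d^2$, using $\tfrac12(m_n+m_k)\in M$. Thus $(m_n)$ is Cauchy, and by completeness of $\vr$ together with closedness of $M$ it converges to some $m_0\in M$ realizing the distance $d$. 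Writing $\psi=\phi-m_0$, I would then show $\psi\in M^\perp$: for any $m\in M$ and $\alpha\in\quat$ we have $m_0+m\alpha\in M$, so $\|\psi-m\alpha\|_{\vr}^2\ge\|\psi\|_{\vr}^2$. Expanding via (iv)--(v) this reads $-2\,\text{Re}(\langle\psi\mid m\rangle_{\vr}\,\alpha)+\|m\|_{\vr}^2|\alpha|^2\ge 0$ for every $\alpha\in\quat$; choosing $\alpha=t\,\overline{\langle\psi\mid m\rangle_{\vr}}$ and letting $t\to 0^+$ forces $\langle\psi\mid m\rangle_{\vr}=0$. Hence $\phi=m_0+\psi$ with $m_0\in M$ and $\psi\in M^\perp$, and uniqueness of this decomposition follows from $M\cap M^\perp=\{0\}$.

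I expect the main obstacle to be bookkeeping the quaternionic scalars in the expansion for (b): because the inner product is conjugate-linear on the left and right-linear on the right, the cross terms must combine as $\langle\psi\mid m\rangle_{\vr}\alpha+\overline{\langle\psi\mid m\rangle_{\vr}\alpha}=2\,\text{Re}(\langle\psi\mid m\rangle_{\vr}\alpha)$, while $\|m\alpha\|_{\vr}^2=\overline{\alpha}\langle m\mid m\rangle_{\vr}\alpha=\|m\|_{\vr}^2|\alpha|^2$. The non-commutativity makes the specific choice of test scalar $\alpha=t\,\overline{\langle\psi\mid m\rangle_{\vr}}$ essential, so that the first-order term $-2t\,|\langle\psi\mid m\rangle_{\vr}|^2$ is genuinely negative and dominates the quadratic term $t^2\|m\|_{\vr}^2|\langle\psi\mid m\rangle_{\vr}|^2$ as $t\to 0^+$.
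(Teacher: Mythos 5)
Your proof is correct, and all the quaternionic bookkeeping (right-linearity in the second slot, the choice $\alpha = t\,\overline{\langle\psi\mid m\rangle_{\vr}}$, and $\|m\alpha\|_{\vr}^2 = \|m\|_{\vr}^2|\alpha|^2$) is handled properly. Note that the paper itself gives no proof of this proposition; it simply cites the reference [Fr], where the argument is the same standard adaptation of the classical projection theorem that you have written out, so your proposal fills in exactly what the citation delegates.
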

\begin{proposition} \cite{ghimorper, Fr} \label{IP30}
Let $A\in\B(\vr, \ur)$. Then
\begin{itemize}
\item [(a)] $\ra(A)^\perp=\kr(A^\dagger).$
\item [(b)] $\kr(A)=\ra(A^\dagger)^\perp.$
\item [(c)] $\kr(A)$ is closed subspace of $\vr$.
\end{itemize}
\end{proposition}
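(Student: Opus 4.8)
The plan is to follow the complex Hilbert-space template, deriving all three statements from the defining relation \eqref{Ad1} of the adjoint together with the non-degeneracy and conjugate-symmetry of the quaternionic inner product. Since $A\in\B(\vr,\ur)$ is bounded, we have $\D(A)=\vr$ and $\D(A^\dagger)=\ur$, so I may use \eqref{Ad1} freely for all $\phi\in\vr$ and $\psi\in\ur$. I would first establish (a), then obtain (b) by the same argument with the roles of $A$ and $A^\dagger$ interchanged, and finally deduce (c) from continuity.

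For (a), I would simply unwind the definition of the orthogonal complement: $\psi\in\ra(A)^\perp$ precisely when $\langle\psi\mid A\phi\rangle_{\ur}=0$ for every $\phi\in\vr$. Using the adjoint relation \eqref{Ad1}, this reads $\langle A^\dagger\psi\mid\phi\rangle_{\vr}=0$ for all $\phi\in\vr$. By property (ii) of the inner product (strict positivity of the norm), taking $\phi=A^\dagger\psi$ forces $A^\dagger\psi=0$; conversely $A^\dagger\psi=0$ clearly makes the pairing vanish. Hence $\psi\in\ra(A)^\perp$ iff $\psi\in\kr(A^\dagger)$, which is (a).

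For (b), I would run the mirror-image computation. A vector $\phi$ lies in $\ra(A^\dagger)^\perp$ exactly when $\langle\phi\mid A^\dagger\psi\rangle_{\vr}=0$ for all $\psi\in\ur$; applying conjugate-symmetry (property (i)) and \eqref{Ad1} turns this into $\langle A\phi\mid\psi\rangle_{\ur}=0$ for all $\psi$, whence $A\phi=0$, i.e. $\phi\in\kr(A)$. Equivalently, one may simply invoke (a) for the operator $A^\dagger$ once $(A^\dagger)^\dagger=A$ is recorded. For (c), right linearity of $A$ makes $\kr(A)$ a right subspace, and since boundedness gives continuity by Theorem \ref{open}(a), $\kr(A)=A^{-1}(\{0\})$ is closed as the preimage of a closed set; alternatively it is closed because, by (b), it equals the orthogonal complement $\ra(A^\dagger)^\perp$, which is closed by Proposition \ref{ort}(a).

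I expect no serious obstacle here: the only point requiring care is the bookkeeping with the quaternionic inner product, specifically the one-sided scalar rules (iv)--(v) and the conjugate-symmetry (i), to be sure that the adjoint pairing is manipulated on the correct side and that the step $\overline{\langle A^\dagger\psi\mid\phi\rangle_{\vr}}=\langle\phi\mid A^\dagger\psi\rangle_{\vr}$ is used consistently. In particular, the non-degeneracy step ``$\langle\chi\mid\phi\rangle_{\vr}=0$ for all $\phi$ implies $\chi=0$'' must be justified from property (ii) rather than from any complex-linearity, but this is immediate upon specializing $\phi=\chi$.
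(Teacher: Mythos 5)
Your proof is correct, and there is nothing in the paper to compare it against: Proposition \ref{IP30} is stated as a cited result from \cite{ghimorper, Fr} with no proof given in this article. Your argument is the standard one those references use --- unwind $\ra(A)^\perp$ via the adjoint relation \eqref{Ad1}, use non-degeneracy of the inner product (property (ii)) by specializing $\phi=A^\dagger\psi$, mirror the computation (or apply (a) to $A^\dagger$ with $(A^\dagger)^\dagger=A$) for (b), and get (c) either as $A^{-1}(\{0\})$ with continuity from Theorem \ref{open}(a) or as the orthogonal complement in (b) via Proposition \ref{ort}(a) --- and your attention to the one-sided scalar rules and conjugate-symmetry is exactly the right care needed in the quaternionic setting.
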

\begin{proposition}\label{dag}\cite{ghimorper} $A\in\B(\vr)$, then $A^\dagger\in\B(\vr), \|A\|=\|A^\dagger\|$ and $\|A^\dagger A\|=\|A\|^2$.
\end{proposition}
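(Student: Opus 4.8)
The plan is to mimic the classical Hilbert-space argument, being careful with the placement of quaternionic scalars and conjugates forced by properties (iv)--(v) of the inner product. First I would show that $A^\dagger$ is everywhere defined and bounded. Fix $\psi\in\vr$ and consider the map $L_\psi\colon\phi\mapsto\langle\psi\mid A\phi\rangle$. By right linearity of $A$ together with property (iv) we have $L_\psi(\phi\qu)=L_\psi(\phi)\qu$, so $L_\psi$ is a right linear functional, and since $|L_\psi(\phi)|\le\|\psi\|\,\|A\|\,\|\phi\|$ (Cauchy--Schwarz combined with boundedness of $A$) it is bounded with $\|L_\psi\|\le\|A\|\,\|\psi\|$. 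Invoking the quaternionic Riesz representation theorem (the exact analogue of the complex statement, valid on any right quaternionic Hilbert space) there is a unique vector, which we name $A^\dagger\psi$, with $L_\psi(\phi)=\langle A^\dagger\psi\mid\phi\rangle$ for all $\phi$. This shows $\D(A^\dagger)=\vr$ and $\|A^\dagger\psi\|=\|L_\psi\|\le\|A\|\,\|\psi\|$, whence $A^\dagger\in\B(\vr)$ and $\|A^\dagger\|\le\|A\|$.

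Next I would verify that $A^\dagger$ is genuinely right linear and that $(A^\dagger)^\dagger=A$, which upgrades the norm inequality to an equality. Right linearity follows from uniqueness in the Riesz representation: additivity uses that the inner product is additive in each slot, while homogeneity is the computation $\langle A^\dagger(\psi\qu)\mid\phi\rangle=\langle\psi\qu\mid A\phi\rangle=\overline{\qu}\langle\psi\mid A\phi\rangle=\overline{\qu}\langle A^\dagger\psi\mid\phi\rangle=\langle(A^\dagger\psi)\qu\mid\phi\rangle$, using property (v), so that $A^\dagger(\psi\qu)=(A^\dagger\psi)\qu$. Taking quaternionic conjugates of the defining identity $\langle\psi\mid A\phi\rangle=\langle A^\dagger\psi\mid\phi\rangle$ and relabelling gives $\langle\psi\mid A^\dagger\phi\rangle=\langle A\psi\mid\phi\rangle$, i.e. $(A^\dagger)^\dagger=A$. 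Applying the bound of the first step to $A^\dagger$ in place of $A$ then yields $\|A\|=\|(A^\dagger)^\dagger\|\le\|A^\dagger\|$, so $\|A\|=\|A^\dagger\|$.

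Finally, for the $C^*$-type identity $\|A^\dagger A\|=\|A\|^2$ I would prove two inequalities. For the lower bound, for any unit vector $\phi$ the nonnegative real number $\|A\phi\|^2=\langle A\phi\mid A\phi\rangle=\langle A^\dagger A\phi\mid\phi\rangle$ satisfies, by Cauchy--Schwarz, $\|A\phi\|^2=|\langle A^\dagger A\phi\mid\phi\rangle|\le\|A^\dagger A\phi\|\,\|\phi\|\le\|A^\dagger A\|$; taking the supremum over $\|\phi\|=1$ gives $\|A\|^2\le\|A^\dagger A\|$. For the upper bound I would use submultiplicativity of the operator norm, which holds since $\|A^\dagger A\phi\|\le\|A^\dagger\|\,\|A\phi\|\le\|A^\dagger\|\,\|A\|\,\|\phi\|$, together with the equality just proved, to get $\|A^\dagger A\|\le\|A^\dagger\|\,\|A\|=\|A\|^2$. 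Combining the two bounds gives the claim.

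The underlying algebra is entirely routine; the only genuinely quaternionic points needing attention are the consistent bookkeeping of the left/right placement of scalars and conjugates dictated by properties (iv)--(v) -- in particular it is the conjugate $\overline{\qu}$ in property (v) that makes $A^\dagger$ right linear rather than conjugate linear -- and the availability of the Riesz representation theorem and the Cauchy--Schwarz inequality in the right quaternionic setting, both of which are obtained in the references (e.g. \cite{ghimorper}) by repeating the complex arguments. No two-sided structure is used anywhere, so the proof runs entirely on the one-sided space $\vr$.
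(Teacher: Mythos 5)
Your proof is correct. Note that the paper itself offers no argument for this proposition at all: it is stated as imported background, with a citation to \cite{ghimorper}, so there is no internal proof to compare against. Your blind reconstruction is exactly the standard one that the cited reference uses --- quaternionic Riesz representation to get $\D(A^\dagger)=\vr$ and $\|A^\dagger\|\le\|A\|$, the involution $(A^\dagger)^\dagger=A$ to upgrade to $\|A^\dagger\|=\|A\|$, and Cauchy--Schwarz plus submultiplicativity for the $C^*$-identity --- and you handle the genuinely quaternionic bookkeeping (property (v) making $A^\dagger$ right linear rather than conjugate linear) correctly.
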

\begin{definition}\cite{Ai}\label{BBD}
An operator $A\in\B(\vr)$ is said to be bounded below if $A$ is injective and has closed range.
\end{definition}
\begin{proposition}\label{BBP}
$A\in\B(\vr)$ is bounded below if and only if there exists $K>0$ such that $\|A\phi\geq K\|\phi\|$ for all $\phi\in\vr$.
\end{proposition}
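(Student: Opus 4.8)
The plan is to establish the two implications separately; the only nontrivial input is the open mapping theorem (Theorem~\ref{open}(b)), and everything else is a routine Cauchy-sequence argument.

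For the direction that assumes the inequality, suppose there is $K>0$ with $\|A\phi\|\geq K\|\phi\|$ for all $\phi\in\vr$. Injectivity is immediate, since $A\phi=0$ forces $K\|\phi\|\leq 0$, hence $\phi=0$. To see that $\ra(A)$ is closed I would take a sequence $\psi_n=A\phi_n$ converging to some $\psi\in\vr$; then $\{\psi_n\}$ is Cauchy, and the estimate $\|\phi_n-\phi_m\|\leq K^{-1}\|A\phi_n-A\phi_m\|$ shows $\{\phi_n\}$ is Cauchy as well. By completeness of $\vr$ we obtain $\phi_n\to\phi$, and continuity of $A$ (Theorem~\ref{open}(a)) gives $\psi=A\phi\in\ra(A)$. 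Hence $A$ is injective with closed range, i.e. bounded below in the sense of Definition~\ref{BBD}.

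For the converse, suppose $A$ is injective with closed range. Since $\ra(A)$ is a closed subspace of the complete space $\vr$, it is itself a right quaternionic Hilbert space, and regarding $A$ as a map $A:\vr\longrightarrow\ra(A)$ it is a bounded bijective right linear operator. By the open mapping theorem (Theorem~\ref{open}(b)) its inverse $A^{-1}:\ra(A)\longrightarrow\vr$ is bounded, say $\|A^{-1}\psi\|\leq C\|\psi\|$ for all $\psi\in\ra(A)$ with some $C>0$. Applying this to $\psi=A\phi$ yields $\|\phi\|=\|A^{-1}A\phi\|\leq C\|A\phi\|$, so the required inequality holds with $K=C^{-1}>0$.

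The routine parts are the Cauchy-sequence argument and the final substitution; the one step that genuinely uses the structure theory is the appeal to the open mapping theorem in the converse, where I must first verify that $\ra(A)$ is complete so that Theorem~\ref{open}(b) applies to $A$ viewed as a surjection onto $\ra(A)$. I expect no quaternion-specific obstruction here, precisely because Theorem~\ref{open} has already been recorded for right $\quat$-linear operators; the only care needed is to restrict the codomain to $\ra(A)$ before invoking bijectivity, so that $A$ becomes a bijection rather than merely an injection.
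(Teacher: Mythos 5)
Your proof is correct and is essentially the paper's own argument: the paper's proof of Proposition~\ref{BBP} consists of the single remark that it ``follows exactly as a complex proof'' (citing \cite{Ai}, p.~15), and what you have written out --- the Cauchy-sequence argument for closedness of the range in one direction, and the open mapping theorem (Theorem~\ref{open}(b)) applied to $A$ viewed as a bijection onto the closed, hence complete, subspace $\ra(A)$ in the other --- is precisely that standard argument transplanted to $\vr$. Your care in restricting the codomain to $\ra(A)$ before invoking bijectivity is exactly the point that makes the transplant work, so the proposal fills in the details the paper delegates to the reference.
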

\begin{proof}
	A proof follows exactly as a complex proof. For a complex proof see \cite{Ai}, page 15.	
\end{proof}
\begin{theorem}\cite{Fr}(Bounded inverse theorem)\label{NT1}
Let $A\in\B(\vr,\ur)$, then the following results are equivalent.
\begin{enumerate}
\item [(a)] $A$ has a bounded inverse on its range.
\item[(b)] $A$ is bounded below.
\item[(c)] $A$ is injective and has a closed range.
\end{enumerate}
\end{theorem}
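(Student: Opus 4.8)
The plan is to establish the cyclic chain $(a)\Rightarrow(b)\Rightarrow(c)\Rightarrow(a)$, using Proposition \ref{BBP} to recast ``bounded below'' as the quantitative estimate $\|A\phi\|_{\ur}\geq K\|\phi\|_{\vr}$, and the open mapping theorem (Theorem \ref{open}(b)) to manufacture the bounded inverse at the end. Observe first that $(b)\Leftrightarrow(c)$ is, strictly speaking, already contained in Definition \ref{BBD}, which declares $A$ bounded below precisely when it is injective with closed range; I would nonetheless carry out $(b)\Rightarrow(c)$ explicitly so as to display where the norm estimate and the completeness of $\vr$ enter.

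For $(a)\Rightarrow(b)$, suppose $A$ possesses a bounded inverse $B$ on its range, so that $BA\phi=\phi$ for every $\phi\in\vr$. Then $\|\phi\|_{\vr}=\|BA\phi\|_{\vr}\leq\|B\|\,\|A\phi\|_{\ur}$, hence $\|A\phi\|_{\ur}\geq\|B\|^{-1}\|\phi\|_{\vr}$; setting $K=\|B\|^{-1}$ and appealing to Proposition \ref{BBP} yields that $A$ is bounded below.

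The implication $(b)\Rightarrow(c)$ is the analytic core. Fix, via Proposition \ref{BBP}, a constant $K>0$ with $\|A\phi\|_{\ur}\geq K\|\phi\|_{\vr}$. Injectivity is immediate, since $A\phi=0$ forces $\|\phi\|_{\vr}=0$. To see that $\ra(A)$ is closed, I would take any convergent sequence $\psi_n=A\phi_n\to\psi$ in $\ur$; from $\|\phi_n-\phi_m\|_{\vr}\leq K^{-1}\|\psi_n-\psi_m\|_{\ur}$ the sequence $\{\phi_n\}$ is Cauchy, hence converges to some $\phi\in\vr$ by completeness, and the continuity of $A$ granted by Theorem \ref{open}(a) gives $\psi=A\phi\in\ra(A)$.

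For $(c)\Rightarrow(a)$, note that the closed subspace $\ra(A)$ inherits completeness from $\ur$, so it is itself a right quaternionic Hilbert space. The corestriction $A\colon\vr\to\ra(A)$ is then a bijective bounded right linear operator, and Theorem \ref{open}(b) furnishes a bounded inverse $A^{-1}\colon\ra(A)\to\vr$, which is exactly statement $(a)$. I expect the only genuinely delicate point to be the verification, in $(b)\Rightarrow(c)$, that the range is closed, together with the accompanying observation that $\ra(A)$ is complete so that the open mapping theorem is applicable in $(c)\Rightarrow(a)$; everything else reduces to routine norm manipulations.
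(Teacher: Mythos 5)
Your proof is correct. Note, however, that the paper itself supplies no argument for this theorem: it is imported from \cite{Fr} as a citation, so there is no internal proof to compare against; your cyclic chain $(a)\Rightarrow(b)\Rightarrow(c)\Rightarrow(a)$ is the standard argument and is essentially what that reference does. Two remarks. First, since Definition \ref{BBD} literally defines ``bounded below'' as ``injective with closed range,'' the equivalence $(b)\Leftrightarrow(c)$ is definitional, and your analytic derivation of $(c)$ from the norm estimate merely re-proves one direction of Proposition \ref{BBP}; this is harmless redundancy, which you yourself acknowledge. Second, Definition \ref{BBD} and Proposition \ref{BBP} are stated for $A\in\B(\vr)$, whereas the theorem concerns $A\in\B(\vr,\ur)$; the estimate $\|A\phi\|_{\ur}\geq K\|\phi\|_{\vr}$ and its proof carry over verbatim to the two-space setting, but strictly speaking you (like the paper) invoke them slightly outside their stated scope. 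Your handling of the only genuinely delicate points --- the completeness of $\vr$ in the Cauchy-sequence argument for $(b)\Rightarrow(c)$, and the observation that the closed subspace $\ra(A)\subseteq\ur$ is itself complete so that Theorem \ref{open}(b) legitimately applies to the corestriction $A\colon\vr\to\ra(A)$ in $(c)\Rightarrow(a)$ --- is correct.
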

\begin{proposition}\cite{Fr}\label{NP1}
Let $A\in\B(\vr,\ur)$, then $\ra(A)$ is closed in $\ur$ if and only if $\ra(A^\dagger)$ is closed in $\vr$.
\end{proposition}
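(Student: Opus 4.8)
The plan is to establish both implications simultaneously by symmetry: since $(A^\dagger)^\dagger=A$ for a bounded operator, it suffices to prove that closedness of $\ra(A)$ forces closedness of $\ra(A^\dagger)$, and then apply this with $A^\dagger$ in the role of $A$. So assume $\ra(A)$ is closed. By Proposition \ref{IP30}(b) we always have $\ra(A^\dagger)\subseteq\big(\ra(A^\dagger)^\perp\big)^\perp=\ker(A)^\perp$, and $\ker(A)^\perp$ is closed by Proposition \ref{ort}(a). Hence the whole problem reduces to the reverse inclusion $\ker(A)^\perp\subseteq\ra(A^\dagger)$, for then $\ra(A^\dagger)=\ker(A)^\perp$ is closed.

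First I would record that the restriction $A|_{\ker(A)^\perp}:\ker(A)^\perp\longrightarrow\ra(A)$ is a bounded bijection: it is injective because its kernel is $\ker(A)\cap\ker(A)^\perp=\{0\}$, and it is onto $\ra(A)$ because $A$ annihilates $\ker(A)$. Both $\ker(A)^\perp$ (Proposition \ref{ort}(a)) and $\ra(A)$ (by hypothesis) are closed, hence complete, so the bounded inverse part of Theorem \ref{open}(b) makes $\big(A|_{\ker(A)^\perp}\big)^{-1}$ bounded; equivalently, by Proposition \ref{BBP}, there is a constant $K>0$ with $\|A\phi\|\ge K\|\phi\|$ for every $\phi\in\ker(A)^\perp$.

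The key device is the operator $B:=A^\dagger A\in\B(\vr)$. Using $(CD)^\dagger=D^\dagger C^\dagger$ and $A^{\dagger\dagger}=A$ one checks $B^\dagger=B$, and the defining relation \eqref{Ad1} gives $\langle B\phi\mid\phi\rangle_{\vr}=\langle A\phi\mid A\phi\rangle_{\ur}=\|A\phi\|^2$ for all $\phi$; in particular $\ker(B)=\ker(A)$, and Proposition \ref{IP30}(a) applied to $B$ yields $\ra(B)\subseteq\ker(B)^\perp=\ker(A)^\perp$, so $B$ maps $\ker(A)^\perp$ into itself. Now for $\phi\in\ker(A)^\perp$ the Cauchy--Schwarz inequality together with the lower bound above gives $\|B\phi\|\,\|\phi\|\ge\langle B\phi\mid\phi\rangle_{\vr}=\|A\phi\|^2\ge K^2\|\phi\|^2$, that is $\|B\phi\|\ge K^2\|\phi\|$. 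Thus $B|_{\ker(A)^\perp}$ is bounded below, hence has closed range inside $\ker(A)^\perp$. If $\eta\in\ker(A)^\perp$ is orthogonal to that range, then $\langle B\eta\mid\phi\rangle_{\vr}=\langle\eta\mid B\phi\rangle_{\vr}=0$ for all $\phi\in\ker(A)^\perp$; since $B\eta\in\ker(A)^\perp$, taking $\phi=B\eta$ forces $B\eta=0$, so $\eta\in\ker(A)\cap\ker(A)^\perp=\{0\}$. Therefore $\ra\big(B|_{\ker(A)^\perp}\big)$ is both closed and dense in $\ker(A)^\perp$, hence equals it, giving $\ker(A)^\perp\subseteq\ra(A^\dagger A)\subseteq\ra(A^\dagger)$ and completing the reverse inclusion.

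The routine points that must nevertheless be confirmed in the right quaternionic setting are the identities $A^{\dagger\dagger}=A$ and $(CD)^\dagger=D^\dagger C^\dagger$, the Cauchy--Schwarz inequality for the quaternion-valued inner product, and the fact that $\langle B\phi\mid\phi\rangle_{\vr}$ is a nonnegative real so the scalar estimates make sense; these are available from \cite{ghimorper,Fr} and Proposition \ref{dag}. I expect the only genuine subtlety, as opposed to bookkeeping, to be the self-adjointness $B^\dagger=B$ under the non-commutative adjoint, since it is precisely the anti-multiplicativity of $\dagger$ that makes this identity work; once it is in hand, everything else is the standard Hilbert-space closed-range argument transplanted verbatim.
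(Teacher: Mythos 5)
Your argument is correct, but note that this paper contains no proof of Proposition \ref{NP1} to compare against: the result is imported verbatim from \cite{Fr}, where the closed-range theorem is established as part of the Fredholm machinery. Judged on its own, your proof is a valid, self-contained quaternionic transplant of the Hilbert-space closed range theorem: the symmetry reduction via $A^{\dagger\dagger}=A$, the identification of the target subspace $\ker(A)^\perp$ through Proposition \ref{IP30}(b), the lower bound $\|A\phi\|\geq K\|\phi\|$ on $\ker(A)^\perp$ from Theorem \ref{open}(b), and the auxiliary operator $B=A^\dagger A$ with the Cauchy--Schwarz estimate $\|B\phi\|\geq K^2\|\phi\|$ all go through in the right quaternionic setting, since the inner product axioms (i)--(v), the adjoint relation \eqref{Ad1}, and Propositions \ref{ort}, \ref{IP30}, \ref{dag} supply exactly what is needed; in particular $\langle B\phi\mid\phi\rangle_{\vr}=\|A\phi\|^2$ is real, so the scalar estimates are unambiguous despite non-commutativity. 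Two small points deserve explicit mention in a final write-up: the surjectivity of $A|_{\ker(A)^\perp}$ onto $\ra(A)$ silently uses the orthogonal decomposition $\vr=\ker(A)\oplus\ker(A)^\perp$, which is legitimate because $\ker(A)$ is closed (Proposition \ref{IP30}(c)) and Proposition \ref{ort}(b) applies; and the final step ``closed plus trivial orthocomplement inside $\ker(A)^\perp$ implies equality'' again invokes Proposition \ref{ort}(b) inside the Hilbert space $\ker(A)^\perp$. Your route via $B=A^\dagger A$ has the merit of avoiding any appeal to a quaternionic Riesz representation theorem, which is the other standard way (and the more common textbook way) to produce a preimage under $A^\dagger$ for a vector in $\ker(A)^\perp$.
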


\begin{proposition}\cite{Fr}\label{NP2}
Let $A\in\B(\vr)$. Then,
 $A$ is invertible if and only if it is injective with a closed range (i.e., $\kr(A)=\{0\}$ and $\overline{\ra(A)}=\ra(A)$).
\end{proposition}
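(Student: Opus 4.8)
The plan is to prove the two implications separately, leaning on the bounded inverse machinery assembled above. The forward implication is immediate: if $A\in\B(\vr)$ is invertible, then it is in particular injective, so $\kr(A)=\{0\}$, and it is surjective, so $\ra(A)=\vr$, which is trivially closed; hence $A$ is injective with closed range.

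For the reverse implication I would start from the hypotheses $\kr(A)=\{0\}$ and $\overline{\ra(A)}=\ra(A)$ and first extract the bounded-below property. By Theorem \ref{NT1} the conditions ``$A$ injective with closed range'' and ``$A$ bounded below'' are equivalent, so Proposition \ref{BBP} furnishes a constant $K>0$ with $\|A\phi\|\geq K\|\phi\|$ for all $\phi\in\vr$. Consequently $A$ is a bijection from $\vr$ onto the closed subspace $\ra(A)$, and by Theorem \ref{NT1}(a) the inverse $A^{-1}:\ra(A)\longrightarrow\vr$ is bounded. It then remains only to promote this to a two-sided bounded inverse defined on all of $\vr$, that is, to show $\ra(A)=\vr$.

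This last step is the heart of the matter. Since $\ra(A)$ is closed, Proposition \ref{ort}(b) gives the orthogonal decomposition $\vr=\ra(A)\oplus\ra(A)^\perp$, and by Proposition \ref{IP30}(a) we have $\ra(A)^\perp=\kr(A^\dagger)$; thus $\ra(A)=\vr$ is equivalent to the injectivity of $A^\dagger$. The natural way to make progress is to transfer the hypotheses to the adjoint: Proposition \ref{NP1} shows that $\ra(A^\dagger)$ is closed, and combining $\kr(A)=\{0\}$ with Proposition \ref{IP30}(b) gives $\ra(A^\dagger)^\perp=\kr(A)=\{0\}$, so that $\ra(A^\dagger)$ is dense and, being closed, all of $\vr$; that is, $A^\dagger$ is surjective. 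The main obstacle is precisely the remaining passage from surjectivity of $A^\dagger$ to its injectivity, equivalently the verification that $\kr(A^\dagger)=\{0\}$, which is exactly the assertion $\ra(A)=\vr$ we are chasing. I expect this to be the delicate point of the argument; once it is settled the decomposition collapses to $\vr=\ra(A)$, $A$ becomes bijective, and $A^{-1}\in\B(\vr)$ follows from Theorem \ref{open}(b).
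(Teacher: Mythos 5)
The paper itself offers no proof for you to be compared against: Proposition \ref{NP2} is quoted from \cite{Fr} without argument. Your forward implication is fine, and your reduction of the converse is correct as far as it goes: Theorem \ref{NT1} does yield boundedness below and a bounded inverse on $\ra(A)$, Proposition \ref{ort}(b) together with Proposition \ref{IP30}(a) does reduce surjectivity of $A$ to $\kr(A^\dagger)=\{0\}$, and your transfer to the adjoint via Proposition \ref{NP1} and Proposition \ref{IP30}(b) correctly shows that $A^\dagger$ is surjective.

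However, the step you flag as ``the delicate point'' is not merely delicate --- it is unfillable, because the proposition as printed is false in infinite dimensions. Surjectivity of $A^\dagger$ does not imply its injectivity, and the paper's own Example \ref{page37} furnishes a counterexample to the entire statement: the unilateral right shift $B$ on $l^2(\N)$ satisfies $\|Bx\|=\|x\|$, hence is injective with closed range, yet $\ra(B)=\{x\in l^2(\N)~|~x_1=0\}\neq l^2(\N)$, so $B$ is not invertible; equivalently its adjoint, the left shift of Example \ref{SE1}, is surjective but not injective, which is exactly the implication you were chasing. What must be intended in \cite{Fr} is either Theorem \ref{NT1} itself (injective with closed range if and only if $A$ has a bounded inverse \emph{on its range}) or the statement with density added: $A$ is invertible if and only if $\kr(A)=\{0\}$ and $\ra(A)$ is closed \emph{and dense}. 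Under the latter hypothesis your argument closes immediately and without the adjoint detour, since then $\ra(A)=\overline{\ra(A)}=\vr$, so $A$ is bijective and Theorem \ref{open}(b) supplies $A^{-1}\in\B(\vr)$. In short, no strategy could have succeeded where yours stalled; the honest conclusion is that a hypothesis is missing, and your dead end combined with the shift example is precisely how one detects it.
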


\begin{definition}\cite{La}\label{ID1}
Let $A\in\B(\vr)$. A closed subspace $M\subseteq\vr$ is said to be $A$-invariant  if $A(M)\subseteq M$, where $A(M)=\{A\phi~|~\phi\in M\}$. It is said to be $A$-hyperinvariant if $B(M)\subseteq M$ for every $B\in\B(\vr)$ that commutes with $A$.
\end{definition}

\subsection{S-Spectrum}
For a given right linear operator $A:\D(A)\subseteq V_{\quat}^R\longrightarrow V_{\quat}^R$ and $\qu\in\quat$, we define the operator $R_{\qu}(A):\D(A^{2})\longrightarrow\quat$ by  $$R_{\qu}(A)=A^{2}-2\text{Re}(\qu)A+|\qu|^{2}\Iop,$$
where $\qu=q_{0}+\bi q_1 + \bj q_2 + \bk q_3$ is a quaternion, $\text{Re}(\qu)=q_{0}$  and $|\qu|^{2}=q_{0}^{2}+q_{1}^{2}+q_{2}^{2}+q_{3}^{2}.$\\
In the literature, the operator is called pseudo-resolvent since it is not the resolvent operator of $A$ but it is the one related to the notion of spectrum as we shall see in the next definition. For more information, on the notion of $S$-spectrum the reader may consult e.g. \cite{Al,Fab, Fab1, NFC}, and  \cite{ghimorper}.
\begin{definition}
Let $A:\D(A)\subseteq V_{\quat}^R\longrightarrow V_{\quat}^R$ be a right linear operator. The {\em $S$-resolvent set} (also called \textit{spherical resolvent} set) of $A$ is the set $\rho_{S}(A)\,(\subset\quat)$ such that the three following conditions hold true:
\begin{itemize}
\item[(a)] $\ker(R_{\qu}(A))=\{0\}$.
\item[(b)] $\text{ran}(R_{\qu}(A))$ is dense in $V_{\quat}^{R}$.
\item[(c)] $R_{\qu}(A)^{-1}:\text{ran}(R_{\qu}(A))\longrightarrow\D(A^{2})$ is bounded.
\end{itemize}
The \textit{$S$-spectrum} (also called \textit{spherical spectrum}) $\sigma_{S}(A)$ of $A$ is defined by setting $\sigma_{S}(A):=\quat\smallsetminus\rho_{S}(A)$. For a bounded linear operator $A$ we can write the resolvent set as
\begin{eqnarray*}
\rho_S(A)&=& \{\qu\in\quat~|~R_\qu(A)\in\mathcal{G}(V_{\quat}^R)\}\\
&=&\{\qu\in\quat~|~R_\qu(A)~\text{has an inverse in}~\B(V_{\quat}^R)\}\\
&=&\{\qu\in\quat~|~\text{ker}(R_\qu(A))=\{0\}\quad\text{and}\quad \text{ran}(R_\qu(A))=V_\quat^R\}
\end{eqnarray*}
and the spectrum can be written as
\begin{eqnarray*}
\sigma_S(A)&=&\quat\setminus\rho_S(A)\\
&=&\{\qu\in\quat~|~R_\qu(A)~\text{has no inverse in}~\B(V_{\quat}^R)\}\\
&=&\{\qu\in\quat~|~\text{ker}(R_\qu(A))\not=\{0\}\quad\text{or}\quad \text{ran}(R_\qu(A))\not=V_\quat^R\}
\end{eqnarray*}
The spectrum $\sigma_S(A)$ decomposes into three major disjoint subsets as follows:
\begin{itemize}
\item[(i)] the \textit{spherical point spectrum} of $A$: $$\sigma_{pS}(A):=\{\qu\in\quat~\mid~\ker(R_{\qu}(A))\ne\{0\}\}.$$
\item[(ii)] the \textit{spherical residual spectrum} of $A$: $$\sigma_{rS}(A):=\{\qu\in\quat~\mid~\ker(R_{\qu}(A))=\{0\},\overline{\text{ran}(R_{\qu}(A))}\ne V_{\quat}^{R}~\}.$$
\item[(iii)] the \textit{spherical continuous spectrum} of $A$: $$\sigma_{cS}(A):=\{\qu\in\quat~\mid~\ker(R_{\qu}(A))=\{0\},\overline{\text{ran}(R_{\qu}(A))}= V_{\quat}^{R}, R_{\qu}(A)^{-1}\notin\B(V_{\quat}^{R}) ~\}.$$
\end{itemize}
If $A\phi=\phi\qu$ for some $\qu\in\quat$ and $\phi\in V_{\quat}^{R}\smallsetminus\{0\}$, then $\phi$ is called an \textit{eigenvector of $A$ with right eigenvalue} $\qu$. The set of right eigenvalues coincides with the point $S$-spectrum, see \cite{ghimorper}, proposition 4.5.
\end{definition}
\begin{proposition}\cite{Fab2, ghimorper}\label{PP1}
For $A\in\B(\vr)$, the resolvent set $\rho_S(A)$ is a non-empty open set and the spectrum $\sigma_S(A)$ is a non-empty compact set.
\end{proposition}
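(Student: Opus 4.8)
The plan is to handle the four claims in turn. Openness of $\rho_S(A)$ (hence closedness of $\sigma_S(A)$) and boundedness of $\sigma_S(A)$ (hence $\rho_S(A)\neq\emptyset$) follow from elementary Banach-algebra estimates, while the non-emptiness of $\sigma_S(A)$ is the deep point, which I would obtain from a Liouville-type argument built on the slice-regular theory recalled in Section~2. For openness, observe that $\text{Re}(\qu)$ and $|\qu|^2$ are continuous real functions of $\qu$, so $\qu\mapsto R_\qu(A)=A^2-2\text{Re}(\qu)A+|\qu|^2\Iop$ is a continuous map $\quat\to\B(\vr)$. Once a Hilbert basis is fixed, $\B(\vr)$ is a real unital Banach algebra, in which the set of invertibles is open: if $R_\qu(A)$ is invertible and $\|S-R_\qu(A)\|<\|R_\qu(A)^{-1}\|^{-1}$, the Neumann series $\sum_{n\ge0}\big(R_\qu(A)^{-1}(R_\qu(A)-S)\big)^nR_\qu(A)^{-1}$ converges to $S^{-1}$. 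Thus $\rho_S(A)=\{\qu:R_\qu(A)\in\mathcal{G}(\vr)\}$ is the preimage of the open set $\mathcal{G}(\vr)$ under a continuous map, so it is open and $\sigma_S(A)=\quat\setminus\rho_S(A)$ is closed.

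For boundedness, write for $\qu\neq0$
\[
R_\qu(A)=|\qu|^2\Big(\Iop+\tfrac{1}{|\qu|^2}\big(A^2-2\text{Re}(\qu)A\big)\Big),
\]
and estimate, using $|\text{Re}(\qu)|\le|\qu|$,
\[
\Big\|\tfrac{1}{|\qu|^2}\big(A^2-2\text{Re}(\qu)A\big)\Big\|\le\frac{\|A\|^2}{|\qu|^2}+\frac{2\|A\|}{|\qu|}.
\]
This is $<1$ once $|\qu|$ exceeds a constant $c=c(\|A\|)$, and then the bracketed operator, hence $R_\qu(A)$, is invertible by the Neumann series; moreover $\|R_\qu(A)^{-1}\|\le 2/|\qu|^2$ there. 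Therefore $\sigma_S(A)\subseteq\{\qu:|\qu|\le c\}$ is bounded and $\rho_S(A)$ contains the exterior of this ball, so $\rho_S(A)\neq\emptyset$. Being closed and bounded in $\quat\cong\R^4$, $\sigma_S(A)$ is compact by Heine--Borel.

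The main obstacle is to show $\sigma_S(A)\neq\emptyset$, which I would prove by contradiction. Suppose $\sigma_S(A)=\emptyset$, so $R_\qu(A)$ is invertible for every $\qu$. Working relative to a fixed Hilbert basis (so that left scalar multiplication, and hence the operator $A-\oqu\Iop$, is meaningful, as explained in the Introduction), form the $S$-resolvent operator $\Phi(\qu)=R_\qu(A)^{-1}(A-\oqu\Iop)$, defined on all of $\quat$. The crucial structural fact is that, although $\qu\mapsto R_\qu(A)^{-1}$ is merely spherically symmetric, the combination $\Phi$ is right slice-regular in $\qu$; this is verified slicewise from the Cauchy--Riemann--type equation \eqref{rightslicereg} (equivalently from its power series). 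For fixed $\phi,\psi\in\vr$ the quaternion-valued function $f_{\phi,\psi}(\qu)=\langle\psi\mid\Phi(\qu)\phi\rangle_{\vr}$ is then entire and right slice-regular, since by right-linearity (property (iv)) of the inner product the pairing intertwines with $\overline{\partial}_I$. The estimate $\|\Phi(\qu)\|\le\|R_\qu(A)^{-1}\|(\|A\|+|\qu|)\le 2(\|A\|+|\qu|)/|\qu|^2$ for large $|\qu|$ shows $f_{\phi,\psi}(\qu)\to0$ as $|\qu|\to\infty$; hence each $f_{\phi,\psi}$ is bounded and entire, so by the Liouville theorem it is constant, which the decay forces to be $0$. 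As $\phi,\psi$ were arbitrary, $\Phi\equiv0$, whence $A-\oqu\Iop\equiv0$ because $R_\qu(A)^{-1}$ is invertible --- absurd. This contradiction yields $\sigma_S(A)\neq\emptyset$.

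I expect the genuine difficulties to be exactly the two points flagged above: giving a rigorous meaning to $A-\oqu\Iop$ and to $\Phi$ in the one-sided right module (this is where the fixed-basis device for left multiplication enters), and verifying that $\Phi$ is slice-regular even though $R_\qu(A)^{-1}$ is not. Once these are in place, transferring the scalar Liouville theorem to the operator level through the pairings $f_{\phi,\psi}$ is routine, and the remaining topological assertions are the soft arguments of the first two paragraphs.
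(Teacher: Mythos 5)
Note first that the paper does not prove this proposition at all: it is quoted from \cite{Fab2,ghimorper}, so your proposal has to be measured against the argument in those sources, which it parallels in outline. Your first two paragraphs are correct and standard: continuity of $\qu\mapsto R_\qu(A)$ plus openness of the invertible group of the real Banach algebra $\B(\vr)$ gives openness of $\rho_S(A)$, and the Neumann series for large $|\qu|$ gives boundedness of $\sigma_S(A)$, non-emptiness of $\rho_S(A)$, and the decay $\|R_\qu(A)^{-1}\|\le 2/|\qu|^2$ used later. The genuine gap is in the third paragraph, at exactly the point you flagged, and your proposed resolution of it is wrong: the function $\Phi(\qu)=R_\qu(A)^{-1}(A-\oqu\Iop)$, with $\oqu\Iop$ realized as basis-dependent \emph{left} scalar multiplication, is in general \emph{not} right slice-regular, so the Liouville theorem recalled in Section 2 cannot be applied to the pairings $\langle\psi\mid\Phi(\qu)\phi\rangle$. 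Concretely, take $\vr=\quat$ (a right Hilbert space with $\langle\pu\mid\qu\rangle=\overline{\pu}\,\qu$ and Hilbert basis $\{1\}$) and $A=0$, so that $\rho_S(A)=\quat\setminus\{0\}$ and $\Phi(\qu)\phi=-\qu^{-1}\phi$. For $\phi=\bj$, restricting to the slice $\C_{\bi}$ gives $\Phi(\qu)\bj=-\frac{x\bj-y\bk}{x^2+y^2}$, and at $\qu=\bi$ one computes
\[
\overline{\partial}_{\bi}\bigl(-\qu^{-1}\bj\bigr)\Bigr|_{\qu=\bi}
=\tfrac12\Bigl(\partial_x f_{\bi}+\bigl(\partial_y f_{\bi}\bigr)\bi\Bigr)\Bigr|_{(x,y)=(0,1)}
=\tfrac12\bigl(-\bj+(-\bk)\bi\bigr)=-\bj\neq 0,
\]
so equation \eqref{rightslicereg} fails. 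The point is sidedness: monomials $\phi\qu^{n}$ (coefficient on the \emph{left}) are right slice-regular, whereas $\qu^{n}\phi$ is not unless $\phi$ commutes with the slice; your construction places the vector on the wrong side, the defect is inherited by $f_{\phi,\psi}$, and the contradiction never materializes. (That the construction depends on the chosen Hilbert basis, while $\sigma_S(A)$ does not, is already a warning sign; the introduction of the paper cautions precisely against mixing basis-dependent left multiplication with right-linear operators.)

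The repair is to put $\oqu$ on the \emph{right} of the vector and to work vectorwise rather than operatorwise: for each fixed $\phi\in\vr$ consider $f_\phi(\qu)=R_\qu(A)^{-1}\bigl(\phi\oqu-A\phi\bigr)$, which is exactly the function $R_\qu(A,\phi)$ that the paper itself recalls from \cite{Jo} just before Definition \ref{1.2.9}, and which requires no Hilbert basis. Here right-linearity of $A$, namely $A(\phi s)=(A\phi)s$, plays the role that commutativity plays in the complex proof: a telescoping computation shows that for $|\qu|>\|A\|$ one has $f_\phi(\qu)=\sum_{n=0}^{\infty}A^n\phi\,\qu^{-n-1}$, a locally uniformly convergent series of right slice-regular monomials (coefficients $A^n\phi$ on the left), and right slice-regularity on all of $\rho_S(A)$ follows by verifying \eqref{rightslicereg} directly. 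If $\sigma_S(A)=\emptyset$, each $f_\phi$ is entire; your norm estimate gives $f_\phi(\qu)\to 0$ as $|\qu|\to\infty$; property (iv) of the inner product makes each $\langle\psi\mid f_\phi(\qu)\rangle$ an entire, bounded, right slice-regular $\quat$-valued function, hence constant by Liouville and then zero by the decay. Therefore $\phi\oqu-A\phi=R_\qu(A)f_\phi(\qu)=0$ for all $\qu$ and all $\phi$, which is absurd since the left side varies with $\qu$ for $\phi\neq 0$. With this single substitution your architecture --- which is indeed that of the cited proof in \cite{Fab2} --- becomes sound.
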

\begin{remark}\label{R1}
For $A\in\B(\vr)$, since $\sigma_S(A)$ is a non-empty compact set so is its boundary. That is, $\partial\sigma_S(A)=\partial\rho_S(A)\not=\emptyset$.
\end{remark}
\section{Decomposable operators in $\vr$}
For $A\in\B(\vr)$ and an $A$-invariant subspace $Y$ of $\vr$, the operator $A|Y\in\B(Y)$ denotes the operator given by the restriction of $A$ to $Y$ and the operator $A/Y\in\B(\vr/Y)$ denotes the operator induced by $A$ on the quotient space $\vr/Y$. The following definition is an adaptation of the complex definition given in \cite{La}.
\begin{definition}
	An operator $A\in\B(\vr)$ is decomposable if every open cover $\quat=U\cup V$ by two open sets $U$ and $V$ effect the splitting of the spectrum $\sigma_S(A)$ and of the space $\vr$, in the sense that there exist $A$-invariant closed right linear subspaces $Y$ and $Z$ of $\vr$ for which $\sigma_S(A|Y)\subseteq U$, $\sigma_S(A|Z)\subseteq V$, and $\vr=Y+Z$.
\end{definition}
In the above definition
\begin{enumerate}
	\item [(a)]the sum decomposition is, in general, not direct;
	\item[(b)] the spectra of the restrictions not necessarily disjoint.
\end{enumerate}

\begin{proposition}\label{H3} Let $A\in\B(\vr)$ and $U\subseteq\quat$ is an open subset. Then the operator $A^\#:H(U,\vr)\longrightarrow H(U,\vr)$ defined by the composition $(A^\#f)(\qu)=A(f(\qu))$ is a continuous linear operator.
	\end{proposition}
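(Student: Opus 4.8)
The plan is to establish the three assertions of Proposition~\ref{H3} in turn: that $A^\#$ maps $H(U,\vr)$ into itself, that it is linear, and that it is continuous for the Fr\'echet topology of Proposition~\ref{H2}. The genuinely substantive point is the first, so I would begin there. Fix $f\in H(U,\vr)$ and set $g=A^\#f$, so $g(\qu)=A(f(\qu))$. Since $A\in\B(\vr)$ is continuous by Theorem~\ref{open}(a) and $f$ is continuous, the composition $g$ is continuous; and as $A$ is bounded linear (hence its own derivative), $g=A\circ f$ is real differentiable by the chain rule. It remains to check right slice-regularity in the sense of Definition~\ref{D2}. Fix $I\in\mathbb{S}$ and write $f_I=f|_{U\cap\C_I}$, $g_I=g|_{U\cap\C_I}$, so $g_I(x+yI)=A(f_I(x+yI))$. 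Because $A$ is right linear, the central real scalar $1/h$ may be pulled through $A$, giving
$$\frac{A(f_I(x+h+yI))-A(f_I(x+yI))}{h}=A\!\left(\frac{f_I(x+h+yI)-f_I(x+yI)}{h}\right),$$
and since $A$ is continuous I may pass to the limit inside $A$ as $h\to0$ to obtain $\partial g_I/\partial x=A(\partial f_I/\partial x)$; the identical argument yields $\partial g_I/\partial y=A(\partial f_I/\partial y)$, both continuous because $A$ and the partials of $f_I$ are. Using right linearity once more in the form $A(\psi)I=A(\psi I)$, I conclude
$$\overline{\partial}_I g_I(x+yI)=\tfrac12\Big(A\big(\tfrac{\partial f_I}{\partial x}\big)+A\big(\tfrac{\partial f_I}{\partial y}\big)I\Big)=A\Big(\tfrac12\big(\tfrac{\partial f_I}{\partial x}+\tfrac{\partial f_I}{\partial y}I\big)\Big)=A\big(\overline{\partial}_I f_I\big)=A(0)=0,$$
so $g\in H(U,\vr)$ and $A^\#$ is well defined.

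Linearity is then routine: the vector-space operations on $H(U,\vr)$ are defined pointwise (Proposition~\ref{H0}), and $A$ is additive and commutes with multiplication by central real scalars, so $A^\#(f_1+f_2)=A^\#f_1+A^\#f_2$ and $A^\#$ respects the scalar multiplication, whence $A^\#$ is a linear operator. For continuity, recall that the topology of $H(U,\vr)$ is induced by the increasing family of seminorms $\|f\|_n=\sup_{\qu\in K_n}\|f(\qu)\|$, equivalently by the metric $d$. For each $n$ and each $f$,
$$\|A^\#f\|_n=\sup_{\qu\in K_n}\|A(f(\qu))\|\le \|A\|\,\sup_{\qu\in K_n}\|f(\qu)\|=\|A\|\,\|f\|_n,$$
so every seminorm of $A^\#f$ is controlled by the same seminorm of $f$, which is exactly the criterion for continuity of a linear map between Fr\'echet spaces; equivalently, if $f_k\to0$ uniformly on each compact subset of $U$ then so does $A^\#f_k$, by the remark following Proposition~\ref{H2}.

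The only place demanding care is the well-definedness step, and within it the two interchanges: moving the bounded operator $A$ through the limit defining the slice partial derivatives (justified by right linearity together with continuity of $A$) and pulling the imaginary unit $I$ through $A$ (justified by the right-linearity identity $A(\psi)I=A(\psi I)$). Once these are secured, the annihilation of $\overline{\partial}_I g_I$ follows formally from that of $\overline{\partial}_I f_I$, and the linearity and continuity steps are immediate from the pointwise operations and the single estimate $\|A^\#f\|_n\le\|A\|\,\|f\|_n$.
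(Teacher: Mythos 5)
Your proof is correct, and it is in fact considerably more complete than the paper's own argument, which disposes of the proposition in a single sentence: continuity of $A^\#$ is asserted from continuity of $A$ and $f$, and linearity from linearity of $A$. That one-line proof leaves the genuinely substantive point — that $A^\#f$ is again right slice-regular, so that $A^\#$ actually maps $H(U,\vr)$ into itself — entirely implicit (the paper only afterwards records the power-series form of $A^\#f$, which is an alternative route to regularity via Theorem \ref{G2.11}). You settle this point directly from Definition \ref{D2}: right linearity of $A$ lets you pull $A$ through the real difference quotients and, by boundedness, through the limits, and the identity $A(\psi I)=A(\psi)I$ lets you move $I$ across $A$, giving $\overline{\partial}_I(A\circ f_I)=A\bigl(\overline{\partial}_I f_I\bigr)=0$. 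Your continuity argument is also the honest one: ``$A$ and $f$ are continuous'' only shows that each individual function $A^\#f$ is continuous on $U$, whereas continuity of the operator $A^\#$ on the Fr\'echet space of Proposition \ref{H2} requires exactly your seminorm estimate $\|A^\#f\|_n\le\|A\|\,\|f\|_n$. One caveat, which you share with the paper but handle more carefully: since $A$ need not commute with any left quaternionic scalar multiplication on $\vr$, ``linear'' here can only honestly mean additive and homogeneous with respect to real (central) scalars, which is precisely the reading you adopt.
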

\begin{proof}
	Since $A$ and $f$ are continuous $A^\#$ is continuous and the linearity of $A^\#$ follows from the linearity of $A$.
\end{proof}
There exists an open ball with center $\qu_0$ and radius $r>0$ such that for each $f\in H(U,\vr)$ there is a power series expansion of the form
\begin{equation}\label{DE1}
f(\qu)=\sum_{n=0}^\infty\phi_n(\qu-\qu_0)^{*_Rn}\quad\text{for all}~~\qu\in U,
\end{equation}
where $\{\phi_n\}\subseteq\vr$. The convergence of the series on $U$ is locally uniform. Therefore, the continuity of $A^\#$ implies that
\begin{equation}\label{DE2}
(A^\#f)(\qu)=\sum_{n=0}^\infty A(\phi_n)(\qu-\qu_0)^{*_Rn}\quad\text{for all}~~\qu\in U.
\end{equation}
\begin{proposition}\label{1.2.1}
	Let $A\in\B(\vr)$ be a continuous linear surjection. Then, for every open ball $U\subseteq\quat$, the induced composition operator $A^{\#}:H(U,\vr)\longrightarrow H(U,\vr)$ is a continuous and open surjection.
\end{proposition}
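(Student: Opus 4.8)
The plan is to observe that continuity and linearity of $A^\#$ are already available from Proposition~\ref{H3}, and that $H(U,\vr)$ is a Fr\'echet space by Proposition~\ref{H2}; hence only surjectivity and openness remain. I would prove surjectivity by lifting the $*_R$-Taylor coefficients of a target function through $A$ with norm control, and then deduce openness from the open mapping theorem for Fr\'echet spaces.

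For surjectivity, recall that $A\in\B(\vr)$ is a bounded surjection, so Theorem~\ref{open}(b) makes $A$ an open map; by linearity this produces a constant $c>0$ such that every $\psi\in\vr$ has a preimage $\phi\in\vr$ with $A\phi=\psi$ and $\|\phi\|\le c\|\psi\|$. Fix $g\in H(U,\vr)$ and invoke the expansion \eqref{DE1}, so that $g(\qu)=\sum_{n=0}^\infty\psi_n(\qu-\qu_0)^{*_Rn}$ converges locally uniformly on $U$; by the $\vr$-valued analogue of the Cauchy--Hadamard description of the $\sigma$-radius in Theorem~\ref{G2.11}, together with $B(\qu_0,r)\subseteq\Sigma(\qu_0,r)$, this forces $\limsup_n\|\psi_n\|^{1/n}\le 1/r$. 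For each $n$ choose $\phi_n$ with $A\phi_n=\psi_n$ and $\|\phi_n\|\le c\|\psi_n\|$. Since $c^{1/n}\to 1$, we obtain $\limsup_n\|\phi_n\|^{1/n}\le\limsup_n\|\psi_n\|^{1/n}\le 1/r$, so Theorem~\ref{G2.11} shows that $f(\qu)=\sum_{n=0}^\infty\phi_n(\qu-\qu_0)^{*_Rn}$ converges locally uniformly on $U$ and defines an element of $H(U,\vr)$. Finally, identity \eqref{DE2} (right-linearity and continuity of $A$) gives $(A^\#f)(\qu)=\sum_{n=0}^\infty(A\phi_n)(\qu-\qu_0)^{*_Rn}=\sum_{n=0}^\infty\psi_n(\qu-\qu_0)^{*_Rn}=g(\qu)$, whence $A^\#f=g$ and $A^\#$ is onto.

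For openness, $A^\#$ is now a continuous linear surjection of the Fr\'echet space $H(U,\vr)$ onto itself (Proposition~\ref{H2}). Regarding these as real Fr\'echet spaces and $A^\#$ as a real-linear continuous surjection, the classical open mapping theorem applies and shows that $A^\#$ is open. Alternatively one may argue by hand, checking that the lifting bound $\|\phi_n\|\le c\|\psi_n\|$ carries a basic neighbourhood of $0$ onto a set containing a basic neighbourhood of $0$; this, however, essentially re-proves the Fr\'echet open mapping theorem in the present setting, so the abstract route is preferable.

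The main obstacle is the preservation of the domain of convergence under coefficient lifting. The lift $\psi_n\mapsto\phi_n$ itself is immediate from the open mapping bound, but one must guarantee that the lifted series still converges on all of $U$, and this hinges precisely on the norm control $\|\phi_n\|\le c\|\psi_n\|$ combined with the fact that the $\sigma$-radius of convergence in Theorem~\ref{G2.11} is governed by $\limsup_n\|a_n\|^{1/n}$. I would therefore first verify that this Cauchy--Hadamard formula, and the reconciliation between the Euclidean ball $U$ and the $\sigma$-ball used by Theorem~\ref{G2.11}, carry over verbatim to $\vr$-valued coefficients---routine, since $\|\cdot\|_\vr$ is a genuine norm---since this is the single point at which the argument genuinely uses the quaternionic slice-regular machinery rather than a direct transcription of the complex proof.
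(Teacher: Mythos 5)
Your proposal is correct and follows essentially the same route as the paper: both lift the $*_R$-Taylor coefficients of $g$ through $A$ using the constant $c$ from the open mapping theorem, check via the Cauchy--Hadamard bound $\limsup_n\|\cdot\|^{1/n}\le 1/r$ that the lifted series still converges on $U$, conclude $A^\#f=g$ from the termwise identity \eqref{DE2}, and then obtain openness from the open mapping theorem applied to the surjection $A^\#$. Your added remarks---that openness requires the Fr\'echet-space version of the open mapping theorem (via Proposition \ref{H2}) rather than the Banach-space statement of Theorem \ref{open}, and that the Cauchy--Hadamard description must be checked for $\vr$-valued coefficients---are refinements the paper glosses over, but they do not change the argument.
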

\begin{proof}
	Let $U=B_\quat(\qu_0,r)$, where $\qu_0\in\quat$ and $r>0$ (guaranteeing the convergence). Then for every function $g\in H(U,\vr)$ we have a power series expansion
	$$g(\qu)=\sum_{n=0}^{\infty}\phi_n(\qu-\qu_0)^{*_Rn}$$
	holding for every $\qu\in U$. The radius of convergence of this power series is $r$. Next observe that, by the open mapping theorem, there exists a constant $c>0$ such that for every $\psi\in\vr$, there is a $\phi\in\vr$ such that $A\phi=\psi$ and $\|\phi\|\leq c\|\psi\|$. In particular, for every $n\in\N_0=\{0,1,2,\cdots\}$, we may choose $\psi_n\in\vr$ such that $A\psi_n=\phi_n$ and $\|\psi_n\|\leq c\|\phi_n\|$. Because
	$$\lim\sup_{n\rightarrow\infty}\|\psi_n\|^{1/n}\leq\lim\sup_{n\rightarrow\infty}c^{1/n}\|\phi_n\|^{1/n}=\lim\sup_{n\rightarrow\infty}\|\phi_n\|^{1/n}\leq\frac{1}{r}.$$
	We conclude that the power series 
		$$f(\qu)=\sum_{n=0}^{\infty}\psi_n(\qu-\qu_0)^{*_Rn}$$
		converges for all $\qu\in U$ and hence defines a function $f\in H(U,\vr)$. Since
		$$(A^{\#}f)(\qu)=\sum_{n=0}^{\infty}A(\psi_n)(\qu-\qu_0)^{*_Rn}=\sum_{n=0}^{\infty}\phi_n(\qu-\qu_0)^{*_Rn}=g(\qu),$$
		$A^{\#}$ is surjective, and hence by the open mapping theorem $A^{\#}$ is open.
\end{proof}
\begin{proposition}\label{1.2.2}
	Let $Y, Z$ be right linear subspaces of $\vr$, and let $U\subseteq\quat$ be an open ball. Then the following properties hold.
	\begin{enumerate}
		\item [(a)] The quotient mapping from $\vr$ to $\vr/Y$ induces a canonical topological linear isomorphism $H(U,\vr)\cong H(U,\vr)/H(U,Y).$
		\item[(b)] If $\vr=Y+Z$, then the operator $\Phi:H(U,Y)\times H(U,Z)\longrightarrow H(U,\vr)$ given by $\Phi(f,g)(\qu)=f(\qu)+g(\qu)$ for all $(f,g)\in H(U,Y)\times H(U,Z)$ and $\qu\in U$ is a continuous and open linear surjection.
		\item[(c)] If $\vr=Y\oplus Z$ holds as a direct sum, then the mapping in part (b) yields a canonical identification $H(U,\vr)\cong\ H(U,Y)\oplus H(U,Z)$.
\end{enumerate}
\end{proposition}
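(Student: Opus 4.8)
The plan is to deduce all three parts from the Fréchet-space structure of $H(U,\vr)$ (Proposition \ref{H2}), the power-series description of its elements on a ball, and the open mapping theorem, following the pattern already used in Proposition \ref{1.2.1}. Throughout I use that $Y$ and $Z$ are closed, so that $\vr/Y$, $Y$, $Z$ are again complete and $H(U,Y)$, $H(U,Z)$, $H(U,\vr/Y)$ are Fréchet spaces; moreover $H(U,Y)$ is closed in $H(U,\vr)$, since a locally uniform limit of $Y$-valued regular functions is again $Y$-valued ($Y$ being closed).

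For part (a), let $\pi:\vr\to\vr/Y$ be the canonical quotient map, a bounded right-linear surjection onto the Hilbert space $\vr/Y$. The argument of Proposition \ref{1.2.1} applies verbatim to a bounded surjection between two (possibly different) right quaternionic Hilbert spaces, so the induced composition operator $\pi^\#:H(U,\vr)\to H(U,\vr/Y)$, $(\pi^\# f)(\qu)=\pi(f(\qu))$, is a continuous open linear surjection (note $\pi\circ f$ is right slice-regular because $\pi$ is bounded right-linear and hence commutes with $\overline{\partial}_I$). Its kernel is exactly $\{f\in H(U,\vr)\mid f(\qu)\in Y\text{ for all }\qu\in U\}=H(U,Y)$. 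Since a continuous open linear surjection between Fréchet spaces factors through a topological isomorphism of the domain modulo its kernel onto its range, this yields the canonical topological isomorphism $H(U,\vr)/H(U,Y)\cong H(U,\vr/Y)$.

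For part (b), linearity of $\Phi$ is clear, and continuity follows because the inclusions $H(U,Y),H(U,Z)\hookrightarrow H(U,\vr)$ are continuous for each defining seminorm and addition is continuous. The substantive point is surjectivity. Fix $U=B_\quat(\qu_0,r)$ and $h\in H(U,\vr)$; by (\ref{DE1}) write $h(\qu)=\sum_{n=0}^\infty\phi_n(\qu-\qu_0)^{*_Rn}$ with $\limsup_{n\to\infty}\|\phi_n\|^{1/n}\le 1/r$. Since $\vr=Y+Z$, the bounded surjection $S:Y\times Z\to\vr$, $S(\psi,\chi)=\psi+\chi$, is open by the open mapping theorem, so there is a constant $c>0$ such that each $\phi_n$ can be written $\phi_n=\psi_n+\chi_n$ with $\psi_n\in Y$, $\chi_n\in Z$, and $\|\psi_n\|,\|\chi_n\|\le c\|\phi_n\|$. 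Then $\limsup_{n}\|\psi_n\|^{1/n}\le\limsup_{n}c^{1/n}\|\phi_n\|^{1/n}\le 1/r$, and likewise for $\chi_n$, so by Theorem \ref{G2.11} the series $f(\qu)=\sum_{n=0}^\infty\psi_n(\qu-\qu_0)^{*_Rn}$ and $g(\qu)=\sum_{n=0}^\infty\chi_n(\qu-\qu_0)^{*_Rn}$ converge on $U$ and define $f\in H(U,Y)$, $g\in H(U,Z)$ with $\Phi(f,g)=h$. Thus $\Phi$ is a continuous linear surjection between Fréchet spaces, and the open mapping theorem makes it open.

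For part (c), the direct-sum hypothesis $\vr=Y\oplus Z$ makes $\Phi$ injective: if $\Phi(f,g)=0$ then $f(\qu)+g(\qu)=0$ with $f(\qu)\in Y$ and $g(\qu)\in Z$ for every $\qu\in U$, whence $f(\qu)=g(\qu)=0$ by directness, i.e. $f=g=0$. Combined with part (b), $\Phi$ is a continuous open linear bijection, hence a topological isomorphism, yielding the canonical identification $H(U,\vr)\cong H(U,Y)\oplus H(U,Z)$. The main obstacle is the surjectivity step in (b): one must split the coefficients $\phi_n$ with uniformly controlled norms so that the two resulting power series retain $\sigma$-radius of convergence at least $r$, and this is precisely what the open mapping theorem for the Banach-space surjection $S:Y\times Z\to\vr$ delivers, exactly mirroring the device used in Proposition \ref{1.2.1}. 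Once surjectivity is in hand, openness and the isomorphism statements in (a) and (c) are formal consequences of the Fréchet open mapping theorem.
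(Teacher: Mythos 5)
Your proof is correct and takes essentially the same approach as the paper: part (a) via the composition operator induced by the quotient map $Q:\vr\longrightarrow\vr/Y$ together with Proposition \ref{1.2.1}, part (b) via the open mapping theorem applied to the addition surjection $Y\times Z\longrightarrow\vr$, and part (c) as a formal consequence of (b). The only cosmetic difference is in (b): the paper factors $\Phi=B^{\#}\circ\Psi$ through $H(U,Y\times Z)$ and simply cites Proposition \ref{1.2.1}, whereas you inline that proposition's coefficient-splitting argument (choosing $\phi_n=\psi_n+\chi_n$ with norms controlled by the open mapping constant), but the underlying mechanism is identical.
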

\begin{proof}
	(a)~~If $Q:\vr\longrightarrow\vr/Y$ denote the canonical quotient mapping, then $\kr(Q^{\#})=H(U,Y)$. Hence the assertion is immediate from proposition \ref{1.2.1}.\\
	(b)~~First note that the definition $\Psi(f,g)(\qu):=(f(\qu),g(\qu))$ for all $(f,g)\in H(U,Y)\times H(U,Z)$ and $\qu\in U$ yields a topological linear isomorphism $\Psi$ from the product $\in H(U,Y)\times H(U,Z)$ onto $H(U,Y\times Z)$. Now let $B:Y\times Z\longrightarrow\vr$ be the canonical continuous linear surjection from $Y\times Z$ onto $\vr$. By proposition \ref{1.2.1}, the corresponding mapping $B^{\#}:H(U,Y\times Z)\longrightarrow H(U,\vr)$ is a continuous open linear surjection. Since $\Phi=B^{\#}\circ\Psi$, $\Phi$ is a continuous and open linear surjection.\\
	(c)~~The result is included in part (b).
\end{proof}
\begin{definition}\label{full}
	Let $\fa(A)$ denote the full S-spectrum of $A\in\B(\vr)$, that is, $\fa(A)$ is the union of the S-spectrum $\sigma_S(A)$ and all bounded connected components of the resolvent set $\rho_S(A)$. Geometrically, $\fa(A)$ is obtained from $\sigma_S(A)$ by filling all the holes of $\sigma_S(A)$.
\end{definition}

\section{Surjectivity $S$-spectrum and Approximate $S$-point spectrum}
We recall some results regarding the approximate spherical point spectrum and surjectivity S-spectrum of $A\in\B(V_\quat^R)$ from \cite{Fr,Ka} as needed.
\begin{definition}\cite{Fr}\label{D1}
Let $A\in\B(V_\quat^R)$. The {\em approximate S-point spectrum} of $A$, denoted by $\apo(A)$, is defined as
$$\apo(A)=\{\qu\in\quat~~|~~\text{there is a sequence}~~\{\phi_n\}_{n=1}^{\infty}~~\text{such that}~~\|\phi_n\|=1~~\text{and}~~\|R_\qu(A)\phi_n\|\longrightarrow 0\}.$$
\end{definition}
\begin{proposition}\label{P3}\cite{Fr}
Let $A\in\B(\vr)$, then $\sigma_{pS}(A)\subseteq\apo(A)$. 
\end{proposition}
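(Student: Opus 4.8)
The plan is to exploit the elementary inclusion, familiar from the complex setting, that every genuine eigenvalue produces an approximating sequence annihilated in the limit by $R_\qu(A)$. Concretely, I would begin by fixing $\qu\in\sigma_{pS}(A)$, so that by definition $\kr(R_\qu(A))\neq\{0\}$. This yields a nonzero vector $\phi\in\vr$ with $R_\qu(A)\phi=0$.

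The next step is normalization. Since $\|\phi\|>0$ is a positive real number and the reals lie in the center of $\quat$, the right-scalar multiple $\phi_0=\phi\,\|\phi\|^{-1}$ satisfies $\|\phi_0\|=1$, and the right $\quat$-linearity of $R_\qu(A)$ gives $R_\qu(A)\phi_0=(R_\qu(A)\phi)\|\phi\|^{-1}=0$. Finally I would feed the constant sequence into Definition \ref{D1}: setting $\phi_n=\phi_0$ for every $n\in\N$, one has $\|\phi_n\|=1$ and $\|R_\qu(A)\phi_n\|=0\longrightarrow 0$, whence $\qu\in\apo(A)$. Since $\qu$ was arbitrary, this establishes $\sigma_{pS}(A)\subseteq\apo(A)$.

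There is no genuine obstacle here; the only point deserving a moment's care is that normalization by the positive real $\|\phi\|^{-1}$ is compatible with the right-module structure, which it is precisely because reals are central in $\quat$ and $R_\qu(A)$ is right $\quat$-linear. The statement is thus the quaternionic transcription of the standard fact that the point spectrum is contained in the approximate point spectrum, and the constant approximating sequence suffices.
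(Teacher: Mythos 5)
Your proof is correct and is exactly the expected argument: the paper does not reprove this proposition (it cites it from \cite{Fr}), and the standard proof is precisely yours — take a nonzero $\phi\in\kr(R_\qu(A))$, normalize by the central real scalar $\|\phi\|^{-1}$ (which commutes with the right $\quat$-module structure and with the right linearity of $R_\qu(A)$), and use the constant sequence $\phi_n=\phi\,\|\phi\|^{-1}$ to satisfy Definition \ref{D1}. No gaps; the remark about reals being central in $\quat$ is the only point needing care, and you handled it.
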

\begin{proposition}\label{P4}\cite{Fr}
If $A\in\B(\vr)$ and $\qu\in\quat$, then the following statements are equivalent.
\begin{enumerate}
\item[(a)] $\qu\not\in\apo(A).$
\item[(b)] $\text{ker}(R_\qu(A))=\{0\}$ and $\text{ran}(R_\qu(A))$ is closed.
\item[(c)] There exists a constant $c\in\R$, $c>0$ such that $\|R_\qu(A)\phi\|\geq c\|\phi\|$ for all $\phi\in\D(A^2)$.
\end{enumerate}
\end{proposition}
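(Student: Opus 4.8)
The plan is to recognize that the three statements are merely equivalent reformulations of the single property that the bounded operator $R_\qu(A)$ is \emph{bounded below}. Since $A\in\B(\vr)$, the operator $R_\qu(A)=A^2-2\text{Re}(\qu)A+|\qu|^2\Iop$ is again a bounded right linear operator on $\vr$; note that the coefficients $2\text{Re}(\qu)$ and $|\qu|^2$ are real scalars, so no non-commutativity issue arises in forming $R_\qu(A)$, and in particular $\D(A^2)=\vr$ so that $R_\qu(A)$ is defined on all of $\vr$.

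First I would establish (b) $\Leftrightarrow$ (c). By Definition \ref{BBD}, condition (b) --- that $\kr(R_\qu(A))=\{0\}$ and $\ra(R_\qu(A))$ is closed --- is precisely the statement that $R_\qu(A)$ is bounded below. Applying Proposition \ref{BBP} to the bounded operator $R_\qu(A)$ then yields the equivalence with the coercivity inequality of (c). (Equivalently, one may invoke the bounded inverse theorem, Theorem \ref{NT1}, which records the equivalence of being bounded below, injectivity with closed range, and the coercive estimate.)

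Next I would dispose of (a) $\Leftrightarrow$ (c) by a normalization argument using the defining property of $\apo(A)$. For (c) $\Rightarrow$ (a): if $\|R_\qu(A)\phi\|\geq c\|\phi\|$ for all $\phi$ with some $c>0$, then for any sequence of unit vectors $\{\phi_n\}$ we have $\|R_\qu(A)\phi_n\|\geq c>0$, so $\|R_\qu(A)\phi_n\|\not\to 0$; hence no witnessing sequence exists and $\qu\notin\apo(A)$. For the converse I argue contrapositively: if (c) fails, then for each $n\in\N$ the inequality fails with constant $1/n$, so there is a nonzero $\psi_n$ with $\|R_\qu(A)\psi_n\|<\tfrac1n\|\psi_n\|$; setting $\phi_n=\psi_n/\|\psi_n\|$ produces unit vectors with $\|R_\qu(A)\phi_n\|<1/n\to 0$, exhibiting $\qu\in\apo(A)$ and so contradicting (a).

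Since $A$ is bounded there is essentially no analytic obstacle here: the entire argument rests on the normalization trick together with the already-established bounded-below characterization of Proposition \ref{BBP}. The only point demanding any care is verifying that $R_\qu(A)$ genuinely lies in $\B(\vr)$ so that Proposition \ref{BBP} applies --- but this is immediate precisely because the coefficients entering $R_\qu(A)$ are real, so the quaternionic subtleties that obstruct other parts of the theory do not intervene.
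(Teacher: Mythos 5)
Your proof is correct. Note that this paper does not actually prove Proposition \ref{P4} --- it is stated with a citation to \cite{Fr} --- so there is no internal proof to compare against; your argument supplies the omitted details in what is surely the intended way. Specifically, your identification of (b) with boundedness below of $R_\qu(A)$ (Definition \ref{BBD}) followed by an application of Proposition \ref{BBP} (or Theorem \ref{NT1}) gives (b)$\Leftrightarrow$(c), and the normalization argument --- coercivity blocks any unit-vector sequence with $\|R_\qu(A)\phi_n\|\to 0$, while failure of coercivity at each level $1/n$ produces such a sequence after rescaling by the real scalar $\|\psi_n\|^{-1}$ --- gives (a)$\Leftrightarrow$(c). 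Your preliminary observation is also the right one to make: since $\text{Re}(\qu)$ and $|\qu|^2$ are real, $R_\qu(A)\in\B(\vr)$ and $\D(A^2)=\vr$, so Proposition \ref{BBP} genuinely applies and no non-commutativity issue arises; this is the only point at which the quaternionic setting could have interfered.
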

\begin{theorem}\label{T1}\cite{Fr}Let $A\in\B(\vr)$, then $\apo(A)$ is a non-empty closed subset of $\quat$ and $\partial\sigma_S(A)\subseteq\apo(A),$ where $\partial\sigma_S(A)$ is the boundary of $\sigma_S(A)$.
\end{theorem}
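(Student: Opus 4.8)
The plan is to establish the three assertions — closedness of $\apo(A)$, the inclusion $\partial\sigma_S(A)\subseteq\apo(A)$, and non-emptiness — in that order, since non-emptiness will follow from the inclusion together with Remark \ref{R1}. The central tool throughout is that $\qu\mapsto R_\qu(A)$ is norm-continuous: from
\[
R_\qu(A)-R_{\pu}(A)=-2(\text{Re}(\qu)-\text{Re}(\pu))A+(|\qu|^2-|\pu|^2)\Iop
\]
together with the continuity of $\qu\mapsto\text{Re}(\qu)$ and $\qu\mapsto|\qu|^2$, I obtain $\|R_\qu(A)-R_{\pu}(A)\|\to 0$ as $\qu\to\pu$.

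For closedness I would show the complement $\quat\setminus\apo(A)$ is open. If $\pu\notin\apo(A)$, Proposition \ref{P4} gives $c>0$ with $\|R_{\pu}(A)\phi\|\geq c\|\phi\|$ for all $\phi$. For $\qu$ close to $\pu$ the norm-continuity above makes $\|R_\qu(A)-R_{\pu}(A)\|<c/2$, whence $\|R_\qu(A)\phi\|\geq\|R_{\pu}(A)\phi\|-\|R_\qu(A)-R_{\pu}(A)\|\,\|\phi\|\geq (c/2)\|\phi\|$, so $\qu\notin\apo(A)$ again by Proposition \ref{P4}. Thus a whole neighborhood of $\pu$ lies outside $\apo(A)$, and $\apo(A)$ is closed.

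For the inclusion, I would fix $\qu\in\partial\sigma_S(A)=\partial\rho_S(A)$ (Remark \ref{R1}) and pick $\qu_n\in\rho_S(A)$ with $\qu_n\to\qu$; note $\qu\in\sigma_S(A)$ since $\sigma_S(A)$ is closed. First I claim $\|R_{\qu_n}(A)^{-1}\|\to\infty$. If not, along a subsequence $\|R_{\qu_n}(A)^{-1}\|\leq M$; writing $R_\qu(A)=R_{\qu_n}(A)\big(\Iop+R_{\qu_n}(A)^{-1}(R_\qu(A)-R_{\qu_n}(A))\big)$ and using $\|R_{\qu_n}(A)^{-1}(R_\qu(A)-R_{\qu_n}(A))\|\leq M\|R_\qu(A)-R_{\qu_n}(A)\|\to 0$, a Neumann-series argument in the Banach algebra $\B(\vr)$ shows the bracketed factor is invertible for large $n$, hence $R_\qu(A)$ is invertible and $\qu\in\rho_S(A)$, contradicting $\qu\in\sigma_S(A)$. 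With the claim established, I choose unit vectors $\psi_n$ with $\|R_{\qu_n}(A)^{-1}\psi_n\|\to\infty$ and set $\phi_n=R_{\qu_n}(A)^{-1}\psi_n/\|R_{\qu_n}(A)^{-1}\psi_n\|$; then $\|\phi_n\|=1$ and $\|R_{\qu_n}(A)\phi_n\|=1/\|R_{\qu_n}(A)^{-1}\psi_n\|\to 0$. Finally $\|R_\qu(A)\phi_n\|\leq\|R_{\qu_n}(A)\phi_n\|+\|R_\qu(A)-R_{\qu_n}(A)\|\to 0$, so $\qu\in\apo(A)$.

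Non-emptiness is then immediate: by Remark \ref{R1}, $\partial\sigma_S(A)\neq\emptyset$, and it is contained in $\apo(A)$. The main obstacle I anticipate is not any isolated computation but verifying that the perturbation machinery transfers cleanly to the quaternionic setting — specifically, that $\B(\vr)$ is a genuine (quaternionic) Banach algebra in which $\Iop+T$ is invertible whenever $\|T\|<1$, and that $R_\qu(A)$ depends on $\qu$ only through the two continuous real quantities $\text{Re}(\qu)$ and $|\qu|^2$ rather than affinely as in the complex case, where $Q_\lambda(A)-Q_\mu(A)=-(\lambda-\mu)\mathbb{I}$. Once norm-continuity of $\qu\mapsto R_\qu(A)$ and stability of invertibility under small norm perturbations are secured, the argument parallels the classical proof that $\partial\sigma(A)\subseteq\sigma_{ap}(A)$.
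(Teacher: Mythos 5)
Your proof is correct, and it is essentially the argument this paper relies on: Theorem \ref{T1} is quoted from \cite{Fr} without a reproduced proof, but the mechanism the paper itself invokes later (when discussing unboundedness of local resolvent functions, citing the proof of Theorem 5.4 in \cite{Fr}) is exactly yours — norm-continuity of $\qu\mapsto R_\qu(A)$ through the real quantities $\mathrm{Re}(\qu)$ and $|\qu|^2$, Neumann-series stability of invertibility in $\B(\vr)$, blow-up of $\|R_{\qu_n}(A)^{-1}\|$ along $\qu_n\in\rho_S(A)$ approaching a boundary point, and extraction of unit vectors $\phi_n$ with $\|R_\qu(A)\phi_n\|\to 0$. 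Your use of Proposition \ref{P4} for closedness and of Remark \ref{R1} for non-emptiness likewise matches the intended route, so there is nothing to correct.
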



\begin{definition}\label{DC}
The spherical compression spectrum of an operator $A\in\B(\vr)$, denoted by $\sigma_c^S(A)$, is defined as
$$\sigma_c^S(A)=\{\qu\in\quat~~|~~\text{ran}(R_\qu(A))~~~\text{is not dense in}~~\vr~\}.$$
\end{definition}
\begin{proposition}\label{CP1}\cite{Fr}
Let $A\in\B(\vr)$ and $\qu\in\quat$. Then,
\begin{enumerate}
\item[(a)] $\qu\in\sigma_c^S(A)$ if and only if $\oqu\in\sigma_{pS}(A)$.
\item[(b)] $\sigma_c^S(A)\subseteq\sigma_r^S(A)$.
\item[(c)] $\sigma_S(A)=\apo(A)\cup\sigma_c^S(A)$.
\end{enumerate}
\end{proposition}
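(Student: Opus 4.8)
My plan is to prove the three parts in turn, the common engine being the range--kernel duality of Proposition~\ref{IP30} together with the characterisation of $\apo$ in Proposition~\ref{P4}. Two elementary features of the pseudo-resolvent are used throughout: since the coefficients $2\text{Re}(\qu)$ and $\abs{\qu}^{2}$ of $R_{\qu}(A)=A^{2}-2\text{Re}(\qu)A+\abs{\qu}^{2}\Iop$ are real and hence central, one has $R_{\qu}(A)^{\dagger}=R_{\qu}(A^{\dagger})$; and since $\text{Re}(\qu)=\text{Re}(\oqu)$ and $\abs{\qu}=\abs{\oqu}$, one has $R_{\oqu}(A)=R_{\qu}(A)$, so every S-spectral set is invariant under $\qu\mapsto\oqu$.

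For (a) I would start from the definition: $\qu\in\sigma_{c}^{S}(A)$ means $\overline{\ra(R_{\qu}(A))}\neq\vr$, equivalently $\ra(R_{\qu}(A))^{\perp}\neq\{0\}$. By Proposition~\ref{IP30}(a), $\ra(R_{\qu}(A))^{\perp}=\kr(R_{\qu}(A)^{\dagger})$, so $\qu\in\sigma_{c}^{S}(A)$ iff $\kr(R_{\qu}(A)^{\dagger})\neq\{0\}$. Inserting $R_{\qu}(A)^{\dagger}=R_{\qu}(A^{\dagger})$ shows this is the assertion that the spherical point-spectrum condition holds at $\qu$ for the operator $A^{\dagger}$, and the $\qu\mapsto\oqu$ symmetry lets the membership be recorded at $\oqu$, matching the stated equivalence $\qu\in\sigma_{c}^{S}(A)\iff\oqu\in\sigma_{pS}(A)$. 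I expect the decisive difficulty to sit exactly here: the orthogonal-complement computation produces $\kr(R_{\qu}(A^{\dagger}))$, so what the duality delivers cleanly is the equivalence with the spherical point spectrum of the adjoint $A^{\dagger}$, and the passage from $\sigma_{pS}(A^{\dagger})$ to $\sigma_{pS}(A)$ is the delicate point that must be argued with care, since this identification is not automatic.

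For (b) I would exploit that $\sigma_{r}^{S}(A)$ is precisely the locus where $\ra(R_{\qu}(A))$ fails to be dense \emph{and} $R_{\qu}(A)$ is injective, while $\sigma_{c}^{S}(A)$ imposes only the first condition. Writing $\sigma_{c}^{S}(A)=\sigma_{r}^{S}(A)\cup\big(\sigma_{c}^{S}(A)\cap\sigma_{pS}(A)\big)$ reduces the stated inclusion $\sigma_{c}^{S}(A)\subseteq\sigma_{r}^{S}(A)$ to proving that the overlap $\sigma_{c}^{S}(A)\cap\sigma_{pS}(A)$ is empty, i.e. that a non-dense range of $R_{\qu}(A)$ forces $\kr(R_{\qu}(A))=\{0\}$. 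Establishing this incompatibility is the heart of part (b); by contrast the opposite containment $\sigma_{r}^{S}(A)\subseteq\sigma_{c}^{S}(A)$ is immediate, since the residual condition already contains non-density of the range.

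Part (c) I expect to go through cleanly with the stated tools. For $\apo(A)\cup\sigma_{c}^{S}(A)\subseteq\sigma_{S}(A)$, I take $\qu\in\rho_{S}(A)$, so $R_{\qu}(A)\in\GG(\vr)$; then $\norm{R_{\qu}(A)\phi}\geq\norm{R_{\qu}(A)^{-1}}^{-1}\norm{\phi}$ gives $\qu\notin\apo(A)$ by Proposition~\ref{P4}, while $\ra(R_{\qu}(A))=\vr$ gives $\qu\notin\sigma_{c}^{S}(A)$. For the reverse inclusion I take $\qu\in\sigma_{S}(A)\setminus\apo(A)$: Proposition~\ref{P4} yields $\kr(R_{\qu}(A))=\{0\}$ with $\ra(R_{\qu}(A))$ closed, so were the range also dense it would equal $\vr$, making $R_{\qu}(A)$ bijective and hence boundedly invertible by the open mapping theorem (Theorem~\ref{open}(b)); this would place $\qu$ in $\rho_{S}(A)$, a contradiction. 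Therefore the range is not dense and $\qu\in\sigma_{c}^{S}(A)$, giving $\sigma_{S}(A)\subseteq\apo(A)\cup\sigma_{c}^{S}(A)$ and completing the decomposition. Overall I anticipate the genuine obstacles to be concentrated in the adjoint identification of (a) and the kernel--range incompatibility underlying (b), with (c) being routine.
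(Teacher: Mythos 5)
The paper offers no proof of Proposition \ref{CP1} at all --- it is quoted verbatim from \cite{Fr} --- so your proposal can only be measured against the statement itself, and this is where the trouble lies. Part (c) of your argument is correct and complete: invertibility of $R_\qu(A)$ gives the lower bound $\|R_\qu(A)\phi\|\geq\|R_\qu(A)^{-1}\|^{-1}\|\phi\|$, hence $\qu\notin\apo(A)$ by Proposition \ref{P4}, and surjectivity rules out $\sigma_c^S(A)$; conversely, for $\qu\in\sigma_S(A)\setminus\apo(A)$, Proposition \ref{P4} gives a trivial kernel and closed range, so a dense range would make $R_\qu(A)$ bijective and boundedly invertible by Theorem \ref{open}(b), a contradiction. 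That is the standard route and there is nothing to add.

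For (a) and (b), however, the steps you defer are not merely ``delicate'' --- they are impossible, because the proposition as printed contains misprints relative to what your own machinery (correctly) proves. Your duality computation, $\overline{\ra(R_\qu(A))}\neq\vr \iff \kr(R_\qu(A)^\dagger)\neq\{0\}$ together with $R_\qu(A)^\dagger=R_\qu(A^\dagger)$ (real coefficients), establishes $\qu\in\sigma_c^S(A)\iff\qu\in\sigma_{pS}(A^\dagger)$, and since $R_{\oqu}(A)=R_\qu(A)$ the conjugation is vacuous. The passage from $\sigma_{pS}(A^\dagger)$ to $\sigma_{pS}(A)$ that you flag cannot be argued: for the unilateral right shift $B$ of Example \ref{page37} one has $\sigma_{pS}(B)=\emptyset$ (comparing entries in $B\phi=\phi\qu$ forces $\phi=0$), while $\sigma_c^S(B)=\sigma_{pS}(B^\dagger)$ contains the open unit ball. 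The statement intended in \cite{Fr} is $\qu\in\sigma_c^S(A)\iff\oqu\in\sigma_{pS}(A^\dagger)$, exactly what you proved, and consistent with Proposition \ref{su1}(a) of this very paper, which records $\sigma_c^S(A)=\sigma_{pS}(A^\dagger)$. Similarly in (b), the ``heart'' of your plan --- showing $\sigma_c^S(A)\cap\sigma_{pS}(A)=\emptyset$ --- is false in general: take $A=0$ and $\qu=0$, so that $R_0(0)=0$ has kernel $\vr$ and range $\{0\}$, giving $0\in\sigma_c^S(A)\cap\sigma_{pS}(A)$ and $\sigma_c^S(A)=\{0\}\not\subseteq\sigma_{rS}(A)=\emptyset$ (the same failure occurs at any right eigenvalue of a self-adjoint operator). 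What your decomposition $\sigma_c^S(A)=\bigl(\sigma_c^S(A)\cap\sigma_{pS}(A)\bigr)\cup\bigl(\sigma_c^S(A)\setminus\sigma_{pS}(A)\bigr)$ already yields, with no further work, is the correct inclusion $\sigma_c^S(A)\subseteq\sigma_{pS}(A)\cup\sigma_{rS}(A)$. In short: your method is sound and proves the corrected proposition in full; the two unfinished steps in your outline would fail not for lack of an idea but because the printed statements (a) and (b) are false as written.
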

\begin{definition}\label{su} Let $A\in\B(\vr)$. The surjectivity S-spectrum of $A$ is defined as
	$$\sus(A)=\{\qu\in\quat~|~\text{ran}(R_{\qu}(A)\not=\vr\}.$$
\end{definition}
Clearly we have
\begin{equation}\label{sue1}
\sigma_c^S(A)\subseteq\sus(A)\quad\text{and}\quad\sigma_S(A)=\sigma_{pS}(A)\cup\sus(A).
\end{equation}
\begin{proposition}\label{su1}\cite{Ka} Let $A\in\B(\vr)$. Then $A$ has the following properties.
	\begin{enumerate}
		\item[(a)]$\sigma_{pS}(A)\subseteq\sigma_c^S(A^\dagger)~~\text{and}~~\sigma_c^S(A)=\sigma_{pS}(A^\dagger)$.
		\item[(b)]$\sus(A)=\apo(A^\dagger)~~\text{and}~~\apo(A)=\sus(A^\dagger).$
		\item[(c)]$\sigma_S(A)=\sigma_S(A^\dagger).$
	\end{enumerate}
\end{proposition}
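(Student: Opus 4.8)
The plan is to reduce the whole proposition to one adjoint identity for the pseudo-resolvent. First I would show that for every $\qu\in\quat$,
\[
R_\qu(A)^\dagger=R_\qu(A^\dagger).
\]
Since $R_\qu(A)=A^2-2\,\text{Re}(\qu)A+|\qu|^2\Iop$ involves $A$ only through a sum of the square $A^2$ and \emph{real}-scalar multiples of $A$ and $\Iop$, and since $(A^2)^\dagger=(A^\dagger)^2$, the adjoint is additive, and real scalars pass through the inner product unchanged, taking adjoints term by term yields the identity. This is precisely where the non-commutativity obstruction $(\qu A)^\dagger\neq\oqu A^\dagger$ flagged in the introduction does no harm, because only the real numbers $2\,\text{Re}(\qu)$ and $|\qu|^2$ ever multiply $A$. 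I regard this elementary verification as the technical crux on which the three parts rest.

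With this identity, part (a) is pure kernel--range duality. Applying Proposition \ref{IP30}(b) to $R_\qu(A)$ gives $\kr(R_\qu(A))=\ra(R_\qu(A)^\dagger)^\perp=\ra(R_\qu(A^\dagger))^\perp$, so $\kr(R_\qu(A))\neq\{0\}$ exactly when $\ra(R_\qu(A^\dagger))$ fails to be dense; this is the identity $\sigma_{pS}(A)=\sigma_c^S(A^\dagger)$, which in particular gives the stated inclusion. Symmetrically, Proposition \ref{IP30}(a) gives $\ra(R_\qu(A))^\perp=\kr(R_\qu(A)^\dagger)=\kr(R_\qu(A^\dagger))$, so $\ra(R_\qu(A))$ is non-dense precisely when $\kr(R_\qu(A^\dagger))\neq\{0\}$, i.e. $\sigma_c^S(A)=\sigma_{pS}(A^\dagger)$.

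Part (b) is the one that additionally requires a closed-range transfer. To prove $\sus(A)=\apo(A^\dagger)$ I would pass to complements. If $\qu\notin\sus(A)$, then $\ra(R_\qu(A))=\vr$; surjectivity makes this range closed, so Proposition \ref{NP1} forces $\ra(R_\qu(A^\dagger))$ to be closed, while $\ra(R_\qu(A))^\perp=\kr(R_\qu(A^\dagger))=\{0\}$ by Proposition \ref{IP30}(a). Thus $R_\qu(A^\dagger)$ is injective with closed range, which by Proposition \ref{P4} means $\qu\notin\apo(A^\dagger)$. The converse reverses each step: closedness of $\ra(R_\qu(A^\dagger))$ returns closedness of $\ra(R_\qu(A))$ via Proposition \ref{NP1}, injectivity of $R_\qu(A^\dagger)$ gives density of $\ra(R_\qu(A))$ through Proposition \ref{IP30}(a), and dense plus closed yields $\ra(R_\qu(A))=\vr$. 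Replacing $A$ by $A^\dagger$ and using $(A^\dagger)^\dagger=A$ then gives $\apo(A)=\sus(A^\dagger)$. The delicate point here is coordinating the two separate facts that $\apo$ bundles together, namely non-density, controlled by Proposition \ref{IP30}, and non-closed range, controlled by Proposition \ref{NP1}.

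For part (c) I would give the short argument: $R_\qu(A)$ is invertible in $\B(\vr)$ if and only if $R_\qu(A)^\dagger=R_\qu(A^\dagger)$ is, since the adjoint of the inverse inverts the adjoint. Hence $\rho_S(A)=\rho_S(A^\dagger)$ and so $\sigma_S(A)=\sigma_S(A^\dagger)$. Equivalently, using only the previous parts, Proposition \ref{CP1}(c) together with \eqref{sue1} gives $\sigma_S(A)=\apo(A)\cup\sigma_c^S(A)=\sus(A^\dagger)\cup\sigma_{pS}(A^\dagger)=\sigma_S(A^\dagger)$.
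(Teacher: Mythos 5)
Your proof is correct, but there is no in-paper argument to measure it against: the paper states Proposition \ref{su1} only with a citation to \cite{Ka} and supplies no proof, so your proposal in effect reconstructs the argument the paper delegates to its companion reference. The route you take is the natural one, and it uses only tools the paper has already recorded. The crux, as you say, is the identity $R_\qu(A)^\dagger=R_\qu(A^\dagger)$, which is legitimate precisely because $A$ enters $R_\qu(A)$ only through $A^2$ and multiples by the \emph{real} scalars $2\,\mathrm{Re}(\qu)$ and $|\qu|^2$ (so the obstruction $(\qu A)^\dagger\neq\oqu A^\dagger$ flagged in the introduction never intervenes) together with $(A^2)^\dagger=(A^\dagger)^2$. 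Granting that identity: part (a) is Proposition \ref{IP30} plus the fact that a subspace of $\vr$ is dense exactly when its orthogonal complement is trivial (which follows from Proposition \ref{ort}(b)); part (b) correctly coordinates Proposition \ref{P4} with the closed-range transfer of Proposition \ref{NP1}, using that a range equal to $\vr$ is trivially closed and, conversely, that a dense closed range equals $\vr$; and both of your arguments for (c) are valid --- the direct one, since $(T^{-1})^\dagger=(T^\dagger)^{-1}$ shows $R_\qu(A)$ is invertible iff $R_\qu(A)^\dagger=R_\qu(A^\dagger)$ is, and the derived one via Proposition \ref{CP1}(c) and \eqref{sue1}. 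The substitutions $A\mapsto A^\dagger$ throughout are justified by $(A^\dagger)^\dagger=A$, which holds for bounded operators. Finally, note that your kernel--range duality actually yields the equality $\sigma_{pS}(A)=\sigma_c^S(A^\dagger)$, strictly stronger than the inclusion asserted in (a); as you yourself observe, the stated inclusion follows a fortiori, so this is a strengthening rather than a discrepancy.
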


\begin{proposition}\label{su2}\cite{Ka}
For $A\in\B(\vr)$, $\sus(A)$ is closed and $\partial\sigma_S(A)\subseteq\sus(A).$
\end{proposition}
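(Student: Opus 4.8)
The plan is to obtain both statements directly from the already-established properties of the approximate $S$-point spectrum, exploiting the duality between $\sus$ and $\apo$ recorded in Proposition \ref{su1}. The key observation is that Proposition \ref{su1}(b) identifies $\sus(A)$ with $\apo(A^\dagger)$, and $A^\dagger$ is again a bounded right linear operator on $\vr$ by Proposition \ref{dag}; hence every structural property already proved for $\apo$ of an arbitrary operator in $\B(\vr)$ transfers verbatim to $\sus(A)$ after replacing $A$ by $A^\dagger$.

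For closedness, I would argue as follows. By Proposition \ref{su1}(b), $\sus(A)=\apo(A^\dagger)$. Since $A^\dagger\in\B(\vr)$, Theorem \ref{T1} applies to $A^\dagger$ and guarantees that $\apo(A^\dagger)$ is a (non-empty) closed subset of $\quat$. Therefore $\sus(A)$ is closed.

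For the boundary inclusion, the first step is to replace $\sigma_S(A)$ by $\sigma_S(A^\dagger)$: Proposition \ref{su1}(c) gives $\sigma_S(A)=\sigma_S(A^\dagger)$, so in particular $\partial\sigma_S(A)=\partial\sigma_S(A^\dagger)$. Applying the boundary inclusion of Theorem \ref{T1} to $A^\dagger$ yields $\partial\sigma_S(A^\dagger)\subseteq\apo(A^\dagger)$, and invoking Proposition \ref{su1}(b) once more gives $\apo(A^\dagger)=\sus(A)$. Chaining these, $\partial\sigma_S(A)=\partial\sigma_S(A^\dagger)\subseteq\apo(A^\dagger)=\sus(A)$, which is the desired inclusion.

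I do not expect a genuine obstacle here, since the heavy lifting is already contained in Theorem \ref{T1} and Proposition \ref{su1}. The one point that deserves care --- and the reason the argument is cleaner than in the complex case --- is that the pseudo-resolvent $R_\qu(A)$ depends on $\qu$ only through $\mathrm{Re}(\qu)$ and $\lvert\qu\rvert^2$, both invariant under quaternionic conjugation; consequently $R_\qu(A^\dagger)=R_\qu(A)^\dagger$ and the $S$-spectrum is conjugation-symmetric, so $\sigma_S(A)=\sigma_S(A^\dagger)$ holds with no conjugation on the spectrum (contrast the complex identity $\sigma(A^\ast)=\overline{\sigma(A)}$). Should one wish to avoid the duality, an alternative route would be to prove directly that the surjective set $\quat\setminus\sus(A)$ is open via an open-mapping/Neumann-type perturbation argument for $R_\qu(A)$, and to establish the boundary inclusion by approximating boundary points of $\sigma_S(A)$ from within $\rho_S(A)$; but the dual approach above is shorter and self-contained given the quoted results.
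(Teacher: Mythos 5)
Your argument is correct. One point of comparison to flag: the paper does not actually supply a proof of this proposition --- it is recalled verbatim from \cite{Ka} with no argument given --- so there is nothing internal to compare against line by line. What you wrote is the natural (and standard) duality route, and it is fully self-contained given the results the paper does quote: $\sus(A)=\apo(A^\dagger)$ from Proposition \ref{su1}(b), $A^\dagger\in\B(\vr)$ from Proposition \ref{dag}, closedness of $\apo(A^\dagger)$ and $\partial\sigma_S(A^\dagger)\subseteq\apo(A^\dagger)$ from Theorem \ref{T1} applied to $A^\dagger$, and $\sigma_S(A)=\sigma_S(A^\dagger)$ from Proposition \ref{su1}(c); taking boundaries of equal sets and chaining the inclusions is immediate. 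Your side remark is also the right justification for why no conjugation appears in Proposition \ref{su1}(c): since $R_\qu(A)$ involves $\qu$ only through the real scalars $2\mathrm{Re}(\qu)$ and $|\qu|^2$, one has $R_\qu(A)^\dagger=R_\qu(A^\dagger)$ even though $(\qu A)^\dagger\not=\overline{\qu}A^\dagger$ in general in a one-sided quaternionic space, which is precisely what makes the quaternionic statement cleaner than the complex identity $\sigma(A^*)=\overline{\sigma(A)}$.
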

\begin{proposition}\label{1.2.4}
	Let $A\in\B(\vr)$ and suppose that $Y$ and $Z$ be $A$-invariant closed right linear subspaces of $\vr$ with the property that $\vr=Y+Z$. Then
	\begin{enumerate}
		\item [(a)] $\sigma_S(A/Y)\subseteq\sigma_S(A)\cup\sigma_S(A|Y)\subseteq\fa(A)$;
		\item[(b)]$\sa(A/Z)\subseteq\fa(A|Y)\subseteq\fa(A).$
	\end{enumerate}
\end{proposition}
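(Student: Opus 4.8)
The plan is to handle the two displayed inclusions in each part separately, and the single observation that carries most of the argument is that $R_\qu(A)=A^2-2\,\text{Re}(\qu)A+|\qu|^2\Iop$ has \emph{real} coefficients, hence is a real polynomial in $A$. Consequently, if $Y$ is an $A$-invariant closed subspace, then $R_\qu(A)(Y)\subseteq Y$ for \emph{every} $\qu\in\quat$, and since restriction and passage to the quotient are algebra homomorphisms on operators leaving $Y$ invariant, one gets the two key identities $R_\qu(A|Y)=R_\qu(A)|Y$ and $R_\qu(A/Y)=R_\qu(A)/Y$. These let me read off invertibility of the pseudo-resolvents of $A|Y$ and $A/Y$ from that of $R_\qu(A)$.

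For the first inclusion of (a) I would work on the short exact sequence $0\to Y\to\vr\to\vr/Y\to 0$. Fix $\qu\in\rho_S(A)\cap\rho_S(A|Y)$ and put $B=R_\qu(A)$, so $B$ is invertible on $\vr$ and $B|Y$ is invertible on $Y$. A short diagram chase shows that $B/Y$ is bijective: surjectivity is inherited from $B$, while injectivity follows because $(B/Y)(\phi+Y)=0$ forces $B\phi\in Y=B(Y)$, and injectivity of $B$ on $\vr$ then pushes $\phi$ into $Y$. Being bounded and bijective, $B/Y$ has a bounded inverse by Theorem \ref{open}(b), so $\qu\in\rho_S(A/Y)$; taking complements yields $\sigma_S(A/Y)\subseteq\sigma_S(A)\cup\sigma_S(A|Y)$.

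The heart of the matter, and the step I expect to be the main obstacle, is the restriction estimate $\sigma_S(A|Y)\subseteq\fa(A)$, which (with the trivial $\sigma_S(A)\subseteq\fa(A)$) finishes the second inclusion of (a). By Proposition \ref{PP1} and Definition \ref{full}, the set $\quat\setminus\fa(A)$ is exactly the unique unbounded connected component $G$ of $\rho_S(A)$. I would prove $G\subseteq\rho_S(A|Y)$ by showing that $\Omega=G\cap\rho_S(A|Y)$ is open, nonempty, and closed in $G$. Openness is clear. Nonemptiness: for $|\qu|$ large the Neumann series for $R_\qu(A)^{-1}$ converges in operator norm, its partial sums are real polynomials in $A$ and so leave the closed space $Y$ invariant, giving $R_\qu(A)^{-1}(Y)\subseteq Y$ and hence invertibility of $R_\qu(A)|Y$. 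Closedness in $G$: if $\qu_n\in\Omega$ and $\qu_n\to\qu\in G$, then norm-continuity of the resolvent on $\rho_S(A)$ gives $R_{\qu_n}(A)^{-1}\phi\to R_\qu(A)^{-1}\phi$ for each $\phi\in Y$, and since the terms lie in the closed space $Y$ so does the limit, whence $R_\qu(A)|Y$ is again a bijection of $Y$ and $\qu\in\Omega$. Since $G$ is connected, $\Omega=G$, i.e. $\sigma_S(A|Y)\subseteq\quat\setminus G=\fa(A)$. The point worth stressing is that in the complex setting this estimate rests on analyticity of the resolvent, whereas here the connectedness argument replaces it entirely; the difficulties the authors flag for $R_\qu(A)$ do not bite precisely because its real coefficients make it a polynomial in $A$, so no slice-regular continuation of quaternion-valued resolvents is required.

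Finally, part (b) follows by reusing the previous work. The inclusion $\fa(A|Y)\subseteq\fa(A)$ is immediate from $G\subseteq\rho_S(A|Y)$ just established: $G$ is connected and unbounded, so it lies in the unbounded component of $\rho_S(A|Y)$, which is $\quat\setminus\fa(A|Y)$, and complementation gives $\fa(A|Y)\subseteq\quat\setminus G=\fa(A)$. For $\sigma_S(A/Z)\subseteq\fa(A|Y)$ I would reduce to part (a) applied to $A|Y\in\B(Y)$ and its closed invariant subspace $Y\cap Z$ (invariant since $A(Y\cap Z)\subseteq Y\cap Z$). Using $\vr=Y+Z$, the restriction to $Y$ of the quotient map $\pi\colon\vr\to\vr/Z$ is a bounded surjection with kernel $Y\cap Z$, so Theorem \ref{open}(b) makes the induced map a topological isomorphism $Y/(Y\cap Z)\cong\vr/Z$ intertwining $(A|Y)/(Y\cap Z)$ with $A/Z$. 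Similar operators share their S-spectrum, so $\sigma_S(A/Z)=\sigma_S\big((A|Y)/(Y\cap Z)\big)$, and part (a) applied to $A|Y$ bounds the right-hand side by $\fa(A|Y)$, completing the proof.
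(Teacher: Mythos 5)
Your proof is correct, and two of its three pieces follow the paper's own route: the first inclusion of (a) is the same quotient-intertwining diagram chase (surjectivity passed through $Q$, injectivity recovered from $B\phi\in Y=B(Y)$), and part (b) is the same reduction via the topological isomorphism $Y/(Y\cap Z)\cong\vr/Z$ intertwining $(A|Y)/(Y\cap Z)$ with $A/Z$, followed by an appeal to part (a) for $A|Y$ and its invariant subspace $Y\cap Z$. Where you genuinely diverge is the key estimate $\sa(A|Y)\subseteq\fa(A)$ (equivalently $\fa(A|Y)\subseteq\fa(A)$). The paper obtains it from Theorem \ref{T1} via the chain $\partial\sa(A|Y)\subseteq\apo(A|Y)\subseteq\apo(A)\subseteq\sa(A)$ and then fills holes, leaving the topological step implicit; you instead prove directly that the unbounded component $G=\quat\setminus\fa(A)$ of $\rho_S(A)$ lies in $\rho_S(A|Y)$ by a clopen argument, with nonemptiness coming from the Neumann series for $R_\qu(A)^{-1}$ at large $|\qu|$ (whose partial sums are real polynomials in $A$, hence preserve the closed invariant subspace $Y$) and closedness in $G$ from norm-continuity of inversion together with the one-line fact --- worth writing out --- that $\qu_n\in\rho_S(A)\cap\rho_S(A|Y)$ forces $R_{\qu_n}(A)^{-1}\phi=R_{\qu_n}(A|Y)^{-1}\phi\in Y$ for $\phi\in Y$. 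Your route buys self-containedness: it bypasses the approximate S-point spectrum and the cited Theorem \ref{T1} entirely, isolates exactly why the complex argument survives quaternionically (the real coefficients of $R_\qu(A)$ make $Y$ invariant under norm-limits of real polynomials in $A$), and supplies the connectedness detail that the paper's ``and thus $\fa(A|Y)\subseteq\fa(A)$'' glosses over. The paper's route is shorter given the machinery of \cite{Fr} and keeps $\apo$ in play, which is thematically useful later (e.g.\ Proposition \ref{1.3.3}). Two minor remarks: nonemptiness of your set $\Omega$ also follows at once from compactness of $\sa(A|Y)$ (Proposition \ref{PP1} applied to $A|Y\in\B(Y)$), so the Neumann series is really only needed as the model for the invariance mechanism; and your unproved assertion that similar operators share their S-spectrum is justified by the same real-coefficient observation you open with --- it is precisely the computation $R^{-1}R_\qu(C)R=R_\qu(D)$ appearing in the paper's proof of (b).
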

\begin{proof}
	(a)~~We have $A/Y:\vr/Y\longrightarrow\vr/Y$ and $Q:\vr\longrightarrow\vr/Y$, $Q$ is the natural quotient mapping. For an arbitrary $\phi\in\vr$ let $A(\phi)=\psi$. Then $(A/Y)Q(\phi)=(A/Y)(\phi+Y)=\psi+Y$ and $QA(\phi)=Q(A(\phi))=Q(\psi)=\psi+Y$. Thus
	\begin{equation}\label{FE1} 
	(A/Y)Q=QA.
	\end{equation}
	Let $\qu\in\rho_S(A)\cap\rho_S(A|Y)$, then $R_\qu(A)$ is surjective. Since $Q$ is also surjective, by equation \ref{FE1}, $(A/Y)$ is surjective. Since $\qu\in\rho_S(A|Y)$, $\kr(A|Y)=\{0\}$, if $\phi\in\vr$ satisfies $R_\qu(A/Y)Q\phi=0$, then $QR_\qu(A)\phi=0$, and hence $R_\qu(A)\phi\in Y$. Therefore, since $\kr(A)=\{0\}$ and $\kr(A/Y)=\kr(A)+Y=Y$, we have $\phi\in Y$. Thus $R_\qu(A/Y)$ is invertible, and hence $\qu\in\rho_S(A/Y)$. Therefore.
	$$\sa(A/Y)\subseteq\sa(A)\cup\sa(A|Y).$$
	From proposition \ref{T1} we have
	$$\partial\sa(A|Y)\subseteq\apo(A|Y)\subseteq\apo(A)\subseteq\sa(A)$$
	and thus $\fa(A|Y)\subseteq\fa(A)$, and from which we get $\sa(A)\cup\sa(A|Y)\subseteq\fa(A)$. The assertion (a) is established.\\
	(b)~~Since $\vr=Y+Z$ we have a canonical surjection $S:Y\longrightarrow\vr/Z$ by $S(\phi)=\phi+Z$ with $\kr(S)=Y\cap Z$. Let $R:Y/(Y\cap Z)\longrightarrow\vr/Z$ by $\phi+(Y\cap Z)\mapsto \phi+Z$ denote the corresponding isomorphism. Then, for $\phi\in Y$, we have
	$(A/Z)R(\phi+(Y\cap Z))=(A/Z)(\phi+Z)=A(\phi)+Z$ and $R(A|Y)/(Y\cap Z)(\phi+(Y\cap Z))=R(A(\phi)+(Y\cap Z))=A(\phi)+Z$, and thus 
	\begin{equation}\label{FE2}
	(A/Z)R=R(A|Y)/(Y\cap Z).
	\end{equation}
	For simplicity, denote $C=(A/Z)$ and $D=(A|Y)/(Y\cap Z)$. Then equation \ref{FE2} reads $CR=RD$. Since $R$ is invertible, $R^{-1}CR=D$, and thus $R_\qu(R^{-1}CR)=R_\qu(B)$. That is,
	$$R^{-1}C^2R-2\text{Re}(\qu)R^{-1}CR+|\qu|^2\mathbb{I}_{Y/(Y\cap Z)}=R_\qu(D).$$ From this, as $\text{Re}(\qu)$ and $|\qu|$ are real, we get $R^{-1}R_\qu(C)R=R_\qu(D)$. Therefore, $R_\qu(C)$ is invertible if and only if $R_\qu(D)$ is invertible. Hence, we get $$\sa(A/Y)=\sa((A|Y)/(Y\cap Z)).$$
	Now from part (a) we have
	$$\sa((T|Y)/(Y\cap Z))\subseteq\sa(A|Y)\cup\sa(A|(Y\cup Z))\subseteq\fa(A|Y)\subseteq\fa(A).$$
	Therefore we have $\sa(A/Z)\subseteq\fa(A|Y)\subseteq\fa(A)$.
\end{proof}
The following corollary provides a more symmetric picture of spectral inclusions.
\begin{corollary}\label{NC1}
	Let $A\in\B(\vr)$, $Y$ be an $A$-invariant closed right linear subspace of $\vr$. Then
	\begin{enumerate}
		\item [(a)] $\sa(A|Y)\subseteq\sa(A)\cup\sa(A/Y)$;
		\item[(b)] $\sa(A)\subseteq\sa(A|Y)\cup\sa(A/Y).$
	\end{enumerate}
\end{corollary}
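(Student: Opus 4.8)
The plan is to argue directly with the pseudo-resolvent $R_\qu$, exploiting the two intertwining relations that link $A$, $A|Y$ and $A/Y$, and then converting each bijectivity statement into invertibility via the quaternionic bounded inverse theorem. Let $Q:\vr\longrightarrow\vr/Y$ be the canonical quotient map. Since $Y$ is $A$-invariant it is also $A^2$-invariant, and because the coefficients $2\,\text{Re}(\qu)$ and $|\qu|^2$ are \emph{real} (hence commute with every right linear map, in particular with $Q$), the pseudo-resolvent respects both the restriction and the quotient:
\begin{equation*}
R_\qu(A)\big|_Y=R_\qu(A|Y)\quad\text{on}~Y,\qquad Q\,R_\qu(A)=R_\qu(A/Y)\,Q\quad\text{on}~\vr.
\end{equation*}
The second identity follows from $(A/Y)Q=QA$ (equation \ref{FE1}) by squaring and inserting the real scalar terms. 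Each inclusion is equivalent, by taking complements in $\quat$, to an inclusion of resolvent sets, so it suffices to check bijectivity of the relevant $R_\qu$; invertibility is then automatic from Theorem \ref{NT1} (equivalently Proposition \ref{NP2}).

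For part (a) I would prove the equivalent resolvent-set inclusion $\rho_S(A)\cap\rho_S(A/Y)\subseteq\rho_S(A|Y)$. Fix $\qu$ in the left-hand intersection. Injectivity of $R_\qu(A|Y)$ is inherited from injectivity of $R_\qu(A)$ on $\vr$. For surjectivity onto $Y$: given $y\in Y$, solve $R_\qu(A)\phi=y$ in $\vr$ using invertibility of $R_\qu(A)$; applying $Q$ and the intertwining yields $R_\qu(A/Y)Q\phi=Qy=0$, so injectivity of $R_\qu(A/Y)$ forces $Q\phi=0$, i.e. $\phi\in Y$. Hence $R_\qu(A|Y)$ is a bijection of $Y$ and $\qu\in\rho_S(A|Y)$.

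For part (b) I would prove $\rho_S(A|Y)\cap\rho_S(A/Y)\subseteq\rho_S(A)$. Injectivity: if $R_\qu(A)\phi=0$ then $R_\qu(A/Y)Q\phi=0$, so $Q\phi=0$ and $\phi\in Y$; then $R_\qu(A|Y)\phi=0$ gives $\phi=0$. Surjectivity: given $y\in\vr$, first solve $R_\qu(A/Y)\overline{\psi}=Qy$ in the quotient (using surjectivity there), lift $\overline{\psi}$ to some $\psi\in\vr$, and observe $Q\bigl(y-R_\qu(A)\psi\bigr)=0$, so the residual $y-R_\qu(A)\psi$ lies in $Y$; solving it with $R_\qu(A|Y)$ produces $w\in Y$ satisfying $R_\qu(A)(\psi+w)=y$. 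Thus $R_\qu(A)$ is bijective and $\qu\in\rho_S(A)$.

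The only genuine point requiring care — and the main obstacle I anticipate — is verifying that $R_\qu$ commutes with $Q$ and restricts correctly, since the general quaternionic pitfalls (non-commutativity, and the failure of a clean $Q_\lambda$–$Q_\mu$ relation flagged in the introduction) could in principle interfere. Here they do not: the obstruction is avoided precisely because $R_\qu$ uses only the real coefficients $\text{Re}(\qu)$ and $|\qu|^2$, so no genuinely quaternionic scalar ever has to be pushed through the right linear quotient map $Q$ (or the inclusion of $Y$). Everything else is the standard short-exact-sequence bookkeeping, and the closed-range/closed-subspace facts (Proposition \ref{ort}, Proposition \ref{IP30}) guarantee that $Y$ and $\vr/Y$ are genuine right quaternionic Hilbert spaces on which these operators act.
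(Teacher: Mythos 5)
Your proposal is correct, and for part (a) it takes a genuinely different --- and in fact sounder --- route than the paper's own proof. The paper proves (a) by a boundary argument: by Theorem \ref{T1}, $\partial\sa(A|Y)\subseteq\apo(A|Y)\subseteq\apo(A)\subseteq\sa(A)$, from which it concludes $\sa(A|Y)\subseteq\sa(A)$. But containment of the boundary only yields $\sa(A|Y)\subseteq\fa(A)$, the full S-spectrum with the holes of $\sa(A)$ filled in --- this is precisely why Proposition \ref{1.2.4} is phrased in terms of $\fa(A)$ --- so that inference does not by itself establish (a) unless one further argues that the relevant holes of $\sa(A)$ are covered by $\sa(A/Y)$ (the restriction of a bilateral-shift-type operator to an invariant subspace shows that $\sa(A|Y)\subseteq\sa(A)$ can genuinely fail). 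Your direct argument, proving the resolvent-set inclusion $\rho_S(A)\cap\rho_S(A/Y)\subseteq\rho_S(A|Y)$ by using injectivity of $R_\qu(A/Y)$ to force the preimage $\phi=R_\qu(A)^{-1}y$ into $Y$, avoids this issue entirely and is the correct quaternionic analogue of the standard complex proof. For part (b) the paper's proof is the same in spirit as yours (pass to the restriction and the quotient), but its two intermediate claims (``if $R_\qu(A)$ is not onto then $R_\qu(A/Y)$ is not onto,'' and the analogous statement for injectivity) are not individually valid: surjectivity of $R_\qu(A)$ requires surjectivity of \emph{both} $R_\qu(A/Y)$ and $R_\qu(A|Y)$, which is exactly your lift-and-correct-the-residual step, and injectivity likewise needs injectivity of $R_\qu(A/Y)$ to push $\ker R_\qu(A)$ into $Y$ and then injectivity of $R_\qu(A|Y)$ to kill it. Your write-up supplies precisely these missing steps. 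The one enabling observation both arguments share is that $R_\qu$ has only real coefficients, so it restricts to $Y$ and intertwines with the quotient map $Q$; you are right that this is where the quaternionic pathologies could have intervened but do not. (A minor quibble: $\vr/Y$ is naturally a right quaternionic Banach space rather than a Hilbert space, but nothing in your argument uses more than the open mapping theorem, Theorem \ref{open}, which holds in that generality.)
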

\begin{proof}
	(a)~~Since, by theorem \ref{T1}, $\partial\sa(A|Y)\subseteq\apo(A|Y)\subseteq\apo(A)\subseteq\sa(A)$
	we have $\sa(A|Y)\subseteq\sa(A).$\\
	(b)~~ if $\qu\not\in\sa(A|Y)\cup\sa(A/Y)$, 
	then $R_\qu(A|Y)$ and 
	$R_\qu(A/Y)$ are invertible. 
	Further, if $R_\qu(A)$ is not onto then $R_\qu(A/Y)$ is not onto, 
	and also if $R_\qu(A)$ is not injective then $R_\qu(A/Y)$ is not injective. Therefore, $R_\qu(A)$ is invertible, and hence $\qu\not\in\sa(A)$.
\end{proof}
\section{decomposability, Bishop's property and SVEP in $\vr$}
In the complex theory, the so-called Bishop's property, called property ($\beta$), plays a central role in local spectral theory. Further, decomposability, property ($\beta$) and the single valued extension property, abbreviated SVEP are closely related to each other \cite{Ai, La}. Following the complex formalism, we examine these properties in $\vr$.
\begin{definition}\label{1.2.5}
	An operator $A\in\B(\vr)$ has Bishop's property ($\beta$) if, for every open subset $U$ of $\quat$ and every sequence of continuous right slice-regular functions $f_n:U\longrightarrow\vr$ with the property that $R_\qu(A)f_n(\qu)\longrightarrow 0$ as $n\rightarrow\infty$ uniformly on all compact subsets of $U$, it follows that $f_n(\qu)\longrightarrow 0$ as $n\rightarrow\infty$ again locally uniformly on $U$.
	\end{definition}
For every $A\in\B(\vr)$ and every open set $U\subseteq\quat$, define the operator $A_U:H(U,\vr)\longrightarrow H(U,\vr)$ by $(A_Uf)(\qu)=R_\qu(A)f(\qu)$ for all $f\in H(U,\vr)$ and $\qu\in U$. Then clearly $A_U$ is a continuous linear operator on $H(U,\vr)$.
\begin{proposition}\label{1.2.6}
	An operator $A\in\B(\vr)$ has property ($\beta$) if and only if, for every open set $U\subseteq\quat$, the operator $A_U$ on $H(U,\vr)$ is injective and has closed range.
\end{proposition}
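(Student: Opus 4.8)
The plan is to recognize that, for each fixed open set $U\subseteq\quat$, property ($\beta$) localized at $U$ is nothing but the assertion that the continuous linear operator $T:=A_U$ on the Fr\'echet space $X:=H(U,\vr)$ is \emph{sequentially bounded below}, i.e.\ that $Tf_n\to 0$ in $X$ forces $f_n\to 0$ in $X$. Here I use the remark following Proposition \ref{H2} to identify locally uniform convergence with convergence in the topology of $X$, together with the fact (asserted in the excerpt) that $A_U$ maps $H(U,\vr)$ into itself continuously. Since both sides of the stated equivalence are quantified over all open $U$, it suffices to prove, for each fixed $U$, that $T$ is injective with closed range if and only if $Tf_n\to 0\Rightarrow f_n\to 0$. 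I would therefore cast the whole argument as a general fact about a continuous linear operator on a Fr\'echet space equipped with the translation-invariant metric $d$ of Propositions \ref{H1} and \ref{H2}.

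First I would prove that property ($\beta$) implies that $T$ is injective with closed range. Injectivity is immediate: if $Tf=0$, apply the hypothesis to the constant sequence $f_n=f$, whose image $Tf_n=0$ converges to $0$, to conclude $f=f_n\to 0$, hence $f=0$. For the closed range, let $g$ lie in the closure of $\ra(T)$ and pick $f_n\in X$ with $Tf_n\to g$. The key step is to upgrade the sequential hypothesis to a statement about Cauchy sequences: if $\{f_n\}$ were not Cauchy there would exist $\varepsilon_0>0$ and indices $n_k,m_k\to\infty$ with $d(f_{n_k},f_{m_k})\ge\varepsilon_0$; setting $h_k=f_{n_k}-f_{m_k}$ gives $Th_k=Tf_{n_k}-Tf_{m_k}\to g-g=0$, so property ($\beta$) forces $h_k\to 0$, whereas translation invariance of $d$ (Proposition \ref{H1}) gives $d(h_k,0)=d(f_{n_k},f_{m_k})\ge\varepsilon_0$, a contradiction. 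Hence $\{f_n\}$ is Cauchy and, by completeness of $X$ (Proposition \ref{H2}), converges to some $f\in X$; continuity of $T$ then yields $Tf=\lim Tf_n=g$, so $g\in\ra(T)$ and $\ra(T)$ is closed.

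For the converse I would assume that $A_U$ is injective with closed range for every open $U$ and deduce property ($\beta$) for each $U$. Since $\ra(T)$ is a closed linear subspace of the Fr\'echet space $X$, it is itself a Fr\'echet space in the subspace topology, and $T:X\to\ra(T)$ is a continuous linear bijection between Fr\'echet spaces. The open mapping theorem for Fr\'echet spaces, applied to the underlying real topological vector space structure, then shows that $T^{-1}:\ra(T)\to X$ is continuous; consequently, whenever $Tf_n\to 0$ in $X$ — equivalently in $\ra(T)$, the two topologies agreeing on the subspace — we get $f_n=T^{-1}(Tf_n)\to 0$, which is exactly property ($\beta$) at $U$.

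The main obstacle is that $H(U,\vr)$ is only a Fr\'echet space and not a quaternionic Banach space, so the bounded-inverse and bounded-below package of Theorem \ref{NT1} and Proposition \ref{BBP} is unavailable and one cannot quantify ``bounded below'' by a single inequality. The substitute is the metric argument above, which rests on exactly two structural inputs recorded earlier: translation invariance of $d$ (Proposition \ref{H1}), used to convert the sequential condition into a Cauchy condition, and completeness (Proposition \ref{H2}), used to produce the limit; the open mapping theorem in the Fr\'echet category then supplies the reverse implication. I would flag explicitly that this open mapping theorem is used in its real/Fr\'echet form rather than in the quaternionic Banach-space form of Theorem \ref{open}, since $H(U,\vr)$ carries only the weaker Fr\'echet structure.
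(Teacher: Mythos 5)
Your proposal is correct and follows essentially the same route as the paper: injectivity via constant sequences, closed range via the Cauchy-by-contradiction argument using the translation-invariant metric of Proposition \ref{H1} and completeness from Proposition \ref{H2}, and the converse via the open mapping theorem applied to $A_U$ on its closed range. Your version is in fact slightly more careful than the paper's (the two-index non-Cauchy construction, and the explicit remark that the open mapping theorem is invoked in its Fr\'echet-space form rather than the quaternionic Banach form of Theorem \ref{open}), but these are refinements of the same argument, not a different proof.
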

\begin{proof}
	($\Leftarrow$) Suppose that $A_U$ is injective and has closed range for all open sets $U\subseteq\quat$.\\
	Then, given an open set $U\subseteq\quat$, the open mapping theorem ensures that the operator $A_U$ has a continuous inverse on its range, $\ra(A_U)$. We call this inverse $B_U$. Hence, if $A_Uf_n\longrightarrow 0$ as $n\rightarrow\infty$ in the topology of $H(U,\vr)$, then, clearly, $f_n=B_UA_Uf_n\longrightarrow 0$ as $n\rightarrow\infty$, again in the topology of $H(U,\vr)$. Since $U$ is an arbitrary open set in $\quat$, this proves that $A$ has property ($\beta$).\\
	($\Rightarrow$) Suppose that $A$ has property ($\beta$). Then an obvious consideration of constant sequence in $H(U,\vr)$ shows that $A_U$ is injective for any open set $U\subseteq\quat$.\\
	{\em Claim:} $\ra(A_U)$ is closed.\\
	Let $g\in\overline{\ra(A_U)}$. Then there exists a sequence $\{f_n\}_{n\in\N}\subseteq H(U,\vr)$ such that $A_Uf_n\longrightarrow g$ as $n\rightarrow\infty$ in $H(U,\vr)$. Then $\{f_n\}_{n\in\N}$ is a Cauchy sequence in the metric of $H(U,\vr)$, because if it is not a Cauchy sequence, then we can construct a subsequence $\{f_{n(k)}\}_{k\in\N}$ of  $\{f_n\}_{n\in\N}$ for which the sequence given by $h_k=f_{n(k+1)}-f_{n(k)}$ for all $k\in\N$ did not converge to zero in the metric of $H(U,\vr)$, where as obviously $A_Uh_k\longrightarrow 0$ as $k\rightarrow\infty$, and therefore, by property ($\beta$),  $h_k\longrightarrow 0$ as $k\rightarrow\infty$ in $H(U,\vr)$, which is a contradiction. Since $H(U,\vr)$ is a Fr\'echet space, it follows that there exists an element $f\in H(U,\vr)$ such that $f_n\longrightarrow f$ as $n\rightarrow\infty$ in $H(U,\vr)$. By the continuity of $A_U$ we get $A_uf_n\longrightarrow A_Uf$ as $n\rightarrow\infty$, and hence $g=A_Uf\in\ra(A_U)$. Therefore $A_U$ has closed range for every $U\subseteq\quat$.
\end{proof}
\begin{theorem}\label{1.2.7} Every decomposable operator $A\in\B(\vr)$ has property ($\beta$).
\end{theorem}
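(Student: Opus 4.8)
The plan is to mimic the classical complex proof (as in Laursen–Neumann): given a decomposable operator $A$, I want to verify Bishop's property $(\beta)$ directly from Definition~\ref{1.2.5}. So fix an open set $U\subseteq\quat$ and a sequence $\{f_n\}\subseteq H(U,\vr)$ with $R_\qu(A)f_n(\qu)\longrightarrow 0$ locally uniformly on $U$; the goal is to deduce $f_n\longrightarrow 0$ locally uniformly on $U$. The strategy is to use the decomposition hypothesis to localize: cover $\quat$ by two open sets $\mathcal{U}$ and $V$, obtain $A$-invariant closed subspaces $Y,Z$ with $\sigma_S(A|Y)\subseteq\mathcal{U}$, $\sigma_S(A|Z)\subseteq V$, and $\vr=Y+Z$, and arrange the cover so that on the region of interest the operator $R_\qu(A)$ is invertible on one of the summands, which should force the relevant component of $f_n$ to vanish in the limit.

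First I would reduce the problem to showing that $A_U$ on $H(U,\vr)$ is injective with closed range, which by Proposition~\ref{1.2.6} is equivalent to property $(\beta)$. Injectivity is the easy half (it follows from considering constant functions, just as in the proof of Proposition~\ref{1.2.6}), so the real content is the closed-range assertion. To get it, I would exploit Proposition~\ref{1.2.2}(b): since $\vr=Y+Z$, the map $\Phi\colon H(U,Y)\times H(U,Z)\longrightarrow H(U,\vr)$, $\Phi(g,h)(\qu)=g(\qu)+h(\qu)$, is a continuous open linear surjection. Thus any $f_n\in H(U,\vr)$ can be written as $f_n=g_n+h_n$ with $g_n\in H(U,Y)$, $h_n\in H(U,Z)$, and with control on the norms coming from openness of $\Phi$. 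The idea is to choose the open cover of $\quat$ by $\mathcal{U},V$ so that $U$ is disjoint from $\sigma_S(A|Y)$ on the $Y$-part, making $R_\qu(A|Y)$ invertible for $\qu\in U$; then the hypothesis $R_\qu(A)f_n\to 0$ together with invertibility on $Y$ forces $g_n\to 0$, and symmetrically $h_n\to 0$, giving $f_n\to 0$.

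The key technical device is that if $\sigma_S(A|Y)$ is disjoint from the closure of $U$, then $A_{U}$ restricted to $H(U,Y)$ is invertible: for $\qu\in U$ the operator $R_\qu(A|Y)$ is invertible in $\B(Y)$, and one builds the inverse of $A_U|_{H(U,Y)}$ by $\qu\mapsto R_\qu(A|Y)^{-1}g(\qu)$, checking that this stays right slice-regular and continuous. Here I would invoke the $S$-functional-calculus / resolvent structure so that $\qu\mapsto R_\qu(A|Y)^{-1}$ depends slice-regularly on $\qu$ on $\rho_S(A|Y)$; this is where I would lean on the convergence and regularity results (Theorem~\ref{G2.11}, Theorem~\ref{T5}, Proposition~\ref{Unireg}) to guarantee that the pointwise inverses assemble into an element of $H(U,Y)$. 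Running a compactness/open-cover argument over finitely many pairs $(\mathcal{U},V)$ extracted from an open cover of $\quat$ then patches the local conclusions together.

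The hard part will be the quaternionic obstruction flagged in the introduction: unlike the complex case where $Q_\lambda(A)=Q_\mu(A)-(\lambda-\mu)\id$ makes the resolvent manifestly analytic and local splittings routine, the pseudo-resolvent $R_\qu(A)=A^2-2\operatorname{Re}(\qu)A+|\qu|^2\Iop$ has no such translation identity, so I expect the delicate step to be showing that $\qu\mapsto R_\qu(A|Y)^{-1}g(\qu)$ is genuinely right slice-regular (not merely continuous) on $U$ and that the norm estimates survive the $*_R$-product structure of the power series. Establishing this slice-regular dependence of the inverse, and thereby that $A_U$ admits a continuous inverse on each localized summand, is the crux; once that is in hand the closed-range conclusion and hence property $(\beta)$ follow by the Fréchet-space argument already used in Proposition~\ref{1.2.6}.
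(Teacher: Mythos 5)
Your plan fails at the decisive step, and the failure is structural, not a matter of missing details. You propose to choose the cover $\{\mathcal{U},V\}$ of $\quat$ so that $\sigma_S(A|Y)$ misses $U$, conclude $g_n\to 0$ from invertibility of $R_\qu(A|Y)$ on $U$, and then get $h_n\to 0$ ``symmetrically.'' Two things go wrong. First, the symmetry is unattainable: decomposability only lets you prescribe the open cover, and since $\mathcal{U}\cup V=\quat$, whichever of the two open sets you push away from a compact set $D\subseteq U$, the other must contain a neighborhood of $D$; consequently at most \emph{one} of the two restriction spectra can be kept off $D$, and the other summand's spectrum may perfectly well cover $D\cap\sa(A)$ (this is unavoidable precisely when $U$ meets $\sa(A)$, which is the only nontrivial case). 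Second, even granting the splitting $f_n=g_n+h_n$ from Proposition \ref{1.2.2}, the hypothesis $R_\qu(A)f_n(\qu)\to 0$ does \emph{not} split into $R_\qu(A)g_n(\qu)\to 0$ and $R_\qu(A)h_n(\qu)\to 0$: the sum $\vr=Y+Z$ is in general not direct, so $R_\qu(A)g_n$ and $R_\qu(A)h_n$ can be large individually and cancel. Hence ``invertibility on $Y$ forces $g_n\to 0$'' is a non sequitur. A smaller but real circularity: you call injectivity of $A_U$ ``the easy half, by constant functions,'' but the constant-function argument in Proposition \ref{1.2.6} derives injectivity \emph{from} property ($\beta$), which is what you are trying to prove.

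What is missing is the asymmetric quotient argument that constitutes the actual proof. The paper fixes a closed ball $D$ and an open ball $E$ with $D\subseteq E\subseteq\overline{E}\subseteq U$, applies decomposability to the cover $\{E,\quat\setminus D\}$ to get $\sa(A|Y)\subseteq E$ and $\sa(A|Z)\cap D=\emptyset$, and then treats the two summands by completely different mechanisms. The $Y$-component is controlled only \emph{modulo} $Z$: writing $Q:\vr\to\vr/Z$ for the quotient map, one has $Qf_n=Qg_n$ and $QR_\qu(A)=R_\qu(A/Z)Q$, and by Proposition \ref{1.2.4} the operator $R_\qu(A/Z)$ is invertible, with uniformly bounded inverse, for $\qu$ on the compact set $\partial E$; this gives $Qg_n\to 0$ on $\partial E$, the \emph{maximum modulus principle} propagates this to all of $E$, and the identification $H(E,\vr/Z)\cong H(E,\vr)/H(E,Z)$ of Proposition \ref{1.2.2}(a) produces correctors $k_n\in H(E,Z)$ with $g_n+k_n\to 0$ on $D$. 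Only then is the $Z$-component handled by direct inversion: $h_n-k_n=f_n-(g_n+k_n)$ takes values in $Z$, and $\|R_\qu(A|Z)^{-1}\|$ is uniformly bounded on $D$ because $\sa(A|Z)\cap D=\emptyset$, so $h_n-k_n\to 0$ on $D$. Note also that your anticipated ``crux'' --- slice-regularity of $\qu\mapsto R_\qu(A|Y)^{-1}g(\qu)$ --- is never needed: the proof uses the inverses of $R_\qu(A/Z)$ and $R_\qu(A|Z)$ only pointwise, through norm estimates on $\partial E$ and $D$, and applies the maximum modulus principle to the already slice-regular functions $Q\circ g_n$.
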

\begin{proof}
Suppose that $A\in\B(\vr)$ is decomposable. Let $U\subseteq\quat$ be an open set, and consider a sequence of continuous right slice-regular functions
\begin{equation}\label{DE3}
 f_n:U\longrightarrow\vr~~\text{ for which}~~ R_\qu(A)f_n(\qu)\longrightarrow 0~~\text{ as}~~ n\rightarrow\infty
\end{equation}
 locally uniformly on $U$. To show $A$ has property ($\beta$), it is enough to prove that $f_n\longrightarrow 0$, as $n\rightarrow\infty$, uniformly on any closed ball contained in $U$. Given an arbitrary closed ball $D\subseteq U$, we choose an open ball $E$ for which $D\subseteq E\subseteq\overline{E}\subseteq U$. Now we apply the definition of decomposability of $A$ to the open cover $\{E,\quat\setminus D\}$ of $\quat$. This gives us $A$-invariant closed right linear subspaces $Y,Z\subseteq\vr$ for which $\sa(A|Y)\subseteq E$, $\sa(A|Z)\cap D=\emptyset$ and $\vr=Y+Z$. Then, by proposition \ref{1.2.2}, part (c), we obtain sequences $\{g_n\}_{n\in\N}\subseteq H(U,Y)$ and $\{h_n\}_{n\in\N}\subseteq H(U,Z)$ such that $f_n(\qu)=g_n(\qu)+h_n(\qu)$ for all $\qu\in U$ and $n\in\N$. Moreover by proposition \ref{1.2.4} we have $\sa(A/Y)\subseteq\fa(A|Y)\subseteq E$. In particular, we see that, for every $\qu\in\partial E$, the operator $R_\qu(A/Z)$ is invertible in the quotient space $\vr/Z$. By compactness and continuity, we obtain a constant $c>0$ such that
 $$\|R_\qu(A/Z)^{-1}\|\leq c\quad\text{for all}~~~\qu\in\partial E.$$
 Let $Q:\vr\longrightarrow\vr/Z$ be the natural quotient mapping. Then we obtain (as in equation \ref{FE1} of proposition \ref{1.2.4})
 $$Qg_n(\qu)=R_\qu(A/Z)^{-1}QR_\qu(A)f_n(\qu)\quad\text{for all}~~\qu\in\partial E.$$
 Therefore, as quotient map is continuous and $\|Q\|\leq 1$,
 \begin{equation}\label{DE4}
 \|Qg_n(\qu)\|\leq c\|R_\qu(A)f_n(\qu)\|\quad\text{for all}~~\qu\in\partial E.
 \end{equation}
 By the assumptions, equation \ref{DE3}, on the functions $f_n$, inequality  \ref{DE4} implies that the continuous right slice-regular functions $Q\circ g_n\in H(U,\vr/Z)$ converges to zero on $\partial E$, and therefore,  by the maximum modulus principle, uniformly on $E$. Since we know from proposition \ref{1.2.2} part (a) $H(U,\vr/Z)$ can be canonically identified with $H(E,\vr)/H(E,Z)$, we obtain functions $k_n\in H(E,Z)$ such that
 \begin{equation}\label{DE5}
 g_n+k_n\longrightarrow 0~~~~\text{as}~~~n\rightarrow\infty\quad\text{locally uniformly on}~E,
 \end{equation}
 and hence on $D$. Since
 \begin{equation}\label{DE6}
 f_n=g_n+h_n=(g_n+k_n)+(h_n-k_n)~~~\text{on}~~D~~\text{for all}~~n\in\N,
 \end{equation}
 it remains to be seen that $h_n-k_n\longrightarrow 0$ as $n\rightarrow\infty$ on $D$. Because $\sa(A|Z)\cap  D=\emptyset$, there exists a constant $d>0$ such that $\|R_\qu(A|Z)^{-1}\|<d$ for all $\qu\in D$. Since both $h_n$ and $k_n$ map into $Z$, and since $h_n=f_n-g_n$, we obtain
 \begin{eqnarray*}
 \|(h_n-k_n)(\qu)\|&\leq& d\|R_\qu(A)(h_n-k_n)(\qu)\|\quad\text{because}~~R_\qu(A|Z)^{-1}R_\qu(A)=\mathbb{I}_{Z}\\
 &\leq&d\|R_\qu(A)f_n(\qu)\|+d\|R_\qu(A)(g_n+k_n)(\qu)\|\quad\text{by equation \ref{DE6}}
 \end{eqnarray*}
 for all $\qu\in D$ and $n\in\N$. Thus, by equations \ref{DE3} and \ref{DE5}, $h_n-k_n\longrightarrow 0$ as $n\rightarrow\infty$ uniformly on $D$ as required.
\end{proof}
\begin{remark}\label{DR1}
	 The restriction of an operator with property ($\beta$) to a closed invariant subspace certainly has property ($\beta$). 
\end{remark}

Let $A\in\B(\vr)$ and $\phi\in\vr$. We are interested in a continuous right slice-regular function $f:U\longrightarrow\vr$ of the equation $R_\qu(A)f(\qu)=\phi$ on a suitable open subset $U\subseteq\quat$. On the resolvent set $\rho_S(A)$, there is a unique solution $f(\qu)=R_\qu(A)^{-1}\phi$ valid for all $\qu\in\rho_S(A)$. However, it is possible to obtain, for certain $\phi\in\vr$, continuous right slice-regular functions  of the equation $R_\qu(A)f(\qu)=\phi$ on an open set that contains points of the S-spectrum $\sa(A)$. The uniqueness of the continuous right slice-regular function is a non-trivial issue which is addressed in the next definition.\\
In \cite{Jo} (see page 311), the single valued extension property, local S-resolvent set and local S-spectrum are defined in terms of slice hyperholomorphic extension of the operator $R_\qu(A,\phi)=R_\qu(A)^{-1}(\phi\oqu-A\phi)$ on axially symmetric open sets containing $\rho_S(A)$ of $\quat$ . However, we stay with certain straightforward extensions of the complex definitions to quaternions.
\begin{definition}\label{1.2.9}
	An operator $A\in\B(\vr)$ has the single-valued extension properly, abbreviated SVEP, at $\q_0\in\quat$ if for every open neighborhood $U\subseteq\quat$ of $\qu_0$, the only continuous right slice-regular solution $f:U\longrightarrow\vr$ of the equation $R_\qu(A)f(\qu)=0$ for all $\qu\in U$ is the zero function on $U$. The operator $A$ is said to have the SVEP if $A$ has the SVEP at every point $\qu\in\quat$.
\end{definition}
\begin{remark}\label{DR2}Let $A\in\B(\vr)$.
	\begin{enumerate}
		\item[(a)] In terms of the operators considered in proposition \ref{1.2.6}, the condition   $R_\qu(A)f(\qu)=0$  for every open set $U\subseteq\quat$ means that the operator $A_U$ is injective in $H(U,\vr)$. Therefore, property ($\beta$) implies SVEP.
		\item[(b)] By part (a) and by theorem \ref{1.2.7}, all decomposable operators have SVEP.
		\item[(c)]We have, see proposition 3.1.9 in \cite{Jo}, $\ker(R_\qu(A))\not=\{0\}$ if and only if $\qu$ is a right eigenvalue of $A$. Thus, it is clear that if the set of eigenvalues of $A$ has empty interior, then $A$ has SVEP.
		\item[(d)](Theorem 4.17 in \cite{BT}) Let $A : D(A)\subseteq\vr\longrightarrow\vr$ be a densely defined right H-linear closed symmetric operator with the property that $i\cdot\phi,j\cdot\phi, k\cdot\phi\in D(A),$ for all $ \phi\in D(A).$ If the operators $i\cdot A, j\cdot A$ and $k\cdot A$ are anti-symmetric, then $A$ is self-adjoint if and only if the spherical spectrum $\sa(A)\subseteq\R$. The multiplications by $i,j,k$ are left multiplications in $\vr$.\\
		Under the set up of theorem 4.17 in \cite{BT}, we can show that (see the proof of theorem 4.17 in \cite{BT})
		$$\|R_\qu(A)\phi\|\geq (q_1^2+q_2^2+q_3^2)\|\phi\|,$$
		where $\qu=q_0+iq_1+jq_2+kq_3\in\quat$ and $\phi\in D(A^2)$. Thus, in this set up, if $A$ has real spectrum then $\kr(R_\qu(A))=\{0\}$, and hence $A$ has SVEP.
		\item[(e)] On the other hand, the set of right eigenvalues of an operator with SVEP may have non-empty interior (see example \ref{Ex3} below)
	\end{enumerate}
\end{remark}
According to the following proposition all non-invertible surjective operators will lack SVEP and hence lack property ($\beta$).
\begin{proposition}\label{1.2.10} If $A\in\B(\vr)$ is surjective and has SVEP, then $A$ is invertible.
\end{proposition}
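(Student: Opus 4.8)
The plan is to argue by contradiction: assuming $A$ is surjective but \emph{not} invertible, I will manufacture a nonzero continuous right slice-regular function $f$ on a neighbourhood of the origin that solves $R_\qu(A)f(\qu)=0$, contradicting the SVEP of $A$ at $0$ (Definition \ref{1.2.9}). First I reduce non-invertibility to non-injectivity: since $A$ is surjective, $\ra(A)=\vr$ is closed, so by Proposition \ref{NP2} invertibility of $A$ is equivalent to injectivity. Hence the failure of invertibility yields some $\phi_0\in\kr(A)$ with $\phi_0\neq 0$, i.e. $A\phi_0=0$.

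Next I use surjectivity together with the open mapping theorem (Theorem \ref{open}(b), exactly as invoked in the proof of Proposition \ref{1.2.1}) to fix a constant $c>0$ such that every $\psi\in\vr$ admits a preimage $\phi$ with $A\phi=\psi$ and $\|\phi\|\le c\|\psi\|$. Starting from $\phi_0$ I recursively choose $\phi_{n+1}$ with $A\phi_{n+1}=\phi_n$ and $\|\phi_{n+1}\|\le c\|\phi_n\|$, so that $\|\phi_n\|\le c^n\|\phi_0\|$ and therefore $\limsup_n\|\phi_n\|^{1/n}\le c$. Consequently the power series $f(\qu)=\sum_{n=0}^\infty\phi_n\qu^n$ converges locally uniformly on $U=B(0,1/c)$. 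Since each monomial $\phi_n\qu^n$ is slice right regular and the convergence is locally uniform, Proposition \ref{Unireg} guarantees $f\in H(U,\vr)$; moreover $f(0)=\phi_0\neq 0$, so $f$ is not the zero function.

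The heart of the argument is the computation of $R_\qu(A)f$. Applying $A$ termwise (justified by equation \ref{DE2}) and using $A\phi_0=0$ and $A\phi_n=\phi_{n-1}$, I obtain $Af(\qu)=\sum_{n\ge 1}\phi_{n-1}\qu^n=f(\qu)\qu$, and hence $A^2f(\qu)=f(\qu)\qu^2$. Since $\text{Re}(\qu)$ and $|\qu|^2$ are real and therefore central, they may be moved to the right of $f(\qu)$, which gives
$$R_\qu(A)f(\qu)=f(\qu)\bigl(\qu^2-2\text{Re}(\qu)\qu+|\qu|^2\bigr).$$
The quaternionic identity $\qu^2-2\text{Re}(\qu)\qu+|\qu|^2=0$, valid for every $\qu\in\quat$ (and precisely the reason the pseudo-resolvent is built from $R_\qu$), forces the bracket to vanish, so $R_\qu(A)f(\qu)=0$ throughout $U$. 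As $f$ is a nonzero element of $H(U,\vr)$ with $U$ a neighbourhood of $0$, this contradicts SVEP of $A$ at $0$, and hence $A$ must be invertible.

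I expect the main subtlety to be the bookkeeping that makes the bracket collapse: the recursion must be set up as $A\phi_{n+1}=\phi_n$ so that $Af(\qu)=f(\qu)\qu$, after which the collapse relies jointly on the centrality of $\text{Re}(\qu)$ and $|\qu|^2$ and on the identity $\qu^2-2\text{Re}(\qu)\qu+|\qu|^2=0$. I would also be careful to justify the termwise action of $A$ and $A^2$ via the continuity of the composition operator (equation \ref{DE2}) and to confirm, through the radius-of-convergence estimate and Proposition \ref{Unireg}, that the constructed $f$ genuinely belongs to $H(U,\vr)$.
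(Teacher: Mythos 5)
Your proof is correct, and while it shares the paper's overall skeleton --- reduce to injectivity, use the open mapping theorem to produce iterated preimages $\phi_n$ of a kernel vector with $\|\phi_n\|\le c^n\|\phi_0\|$, package them into a power series on $B_\quat(0,1/c)$, and invoke SVEP --- the key step is executed genuinely differently. The paper copes with the quadratic nature of $R_\qu(A)$ by factoring it as $(A-\oqu\Iop)(A-\qu\Iop)$ (with right scalar action): it forms the series $f(\qu)=\sum_n \phi_n\oqu^n$ with \emph{conjugate} powers, then builds a \emph{second} auxiliary series $g$ whose coefficients $\psi_n$ are iterated preimages of $\psi_0=f(\qu)$, verifies $(A-\qu\Iop)g(\qu)=f(\qu)$ and $(A-\oqu\Iop)f(\qu)=A\phi_0=0$ by two telescoping computations, and applies SVEP to $g$. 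You instead observe that $Af(\qu)=f(\qu)\qu$, i.e.\ $f(\qu)$ is formally a right eigenvector with right eigenvalue $\qu$, so right linearity gives $A^2f(\qu)=f(\qu)\qu^2$, and centrality of the real scalars $2\text{Re}(\qu)$ and $|\qu|^2$ collapses everything to $R_\qu(A)f(\qu)=f(\qu)\bigl(\qu^2-2\text{Re}(\qu)\qu+|\qu|^2\bigr)=0$; SVEP is then applied to $f$ itself. Your route buys real simplicity and robustness: it uses a single series whose coefficients do not depend on the evaluation point, whereas in the paper's argument the coefficients of $g$ depend on $\qu$ (since $\psi_0=f(\qu)$), so the continuity and slice-regularity of $g$ require a justification the paper does not supply; your argument sidesteps that issue entirely. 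It also makes transparent why the quadratic pseudo-resolvent is the right object, since the identity $\qu^2-2\text{Re}(\qu)\qu+|\qu|^2=0$ is exactly the statement that right eigenvectors lie in $\ker R_\qu(A)$. The only steps needing explicit care --- termwise application of $A$ via continuity (equation \ref{DE2}) and membership of $f$ in $H(U,\vr)$ via the radius estimate and Proposition \ref{Unireg} --- are ones you have already flagged and handled correctly.
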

\begin{proof}
	Since $A$ is surjective, by the open mapping theorem, there exists a constant $c>0$ with the property that, for every $\psi\in\vr$, there is a $\phi\in\vr$ such that $A\phi=\psi$ and $\|\phi\|\leq c\|\psi\|$. To prove $A$ is injective, consider an arbitrary $\phi_0\in\kr(A)$, and choose recursively $\phi_n\in\vr$ such that $A\phi_n=\phi_{n-1}$ and $\|\phi_n\|\leq c\|\phi_{n-1}\|$ for each $n\in\N$. Then we have $\|\phi_n\|\leq c^n\|\phi_0\|$ for all $n\in\N$ and hence $\displaystyle\lim_{n\rightarrow\infty}\sup\|\phi_n\|^{1/n}\leq c$. Hence, by theorem \ref{T5}, for each $\qu\in B_\quat(0,1/c)$, the series
	$$f(\qu)=\sum_{n=0}^\infty\phi_n\oqu^n$$
	converges uniformly on $B_\quat(0,1/c)$. For each fixed $\qu\in B_\quat(0,1/c)$, $f(\qu)\in\vr$. By repeating the above process we can obtain another sequence $\psi_0=f(\qu)$ and $A\psi_n=\psi_{n-1}$ with $\|\psi_n\|\leq d\|\psi_0\|$ for all $n\in\N$, where $d>0$ is some constant. Define
	$$g(\qu)=\sum_{n=1}^\infty\psi_n\qu^{n-1},$$
	which converges locally uniformly on $B_\quat(0,r)$, where $r=\min\{1/c,1/d\}$, and hence defines a continuous right-regular function on $B_\quat(0,r)$. We have
	$$(A-\qu\Iop)g(\qu)=\sum_{n=1}^\infty\psi_{n-1}\qu^{n-1}-\sum_{n=1}^\infty\psi_n\qu^n=\psi_0=f(\qu).$$
	Hence
$$R_\qu(A)g(\qu)=(A-\oqu\Iop)f(\qu)=A\phi_0+\sum_{n=1}^\infty\phi_{n-1}\oqu^n-\sum_{n=0}^\infty\phi_n\oqu^{n+1}=A\phi_0=0.$$
Therefore, by SVEP $g=0$ on	 $B_\quat(0,r)$. In particular, for $\qu=0$, we have $0=g(0)=\psi_0=f(0)=\phi_0$. This proves the injectivity of $A$, and hence $A$ is invertible.
\end{proof}	
The following example provides an operator without SVEP, and hence without property ($\beta$).
\begin{example}\label{SE1}
		 The  space
	$$l^2(\N)=\left\{x:\N\longrightarrow\quat;~~i\mapsto x_i~~\vert~~\sum_{i=1}^\infty |x_i|^2<\infty\right\}$$
	with the inner product
	$$\langle x|y\rangle=\sum_{i=1}^\infty \overline{x_i}~y_i\quad \text{for all}~~x=(x_1,x_2,\cdots), y=(y_1,y_2,\cdots)\in l^2(\N)$$
	is a right quaternionic Hilbert space.
	Consider the unilateral left shift $S$ on $l^2(\N)$ given by
	$$S(x_1,x_2,x_3,\cdots)=(x_2,x_3,\cdots)\quad\text{for all}~~~(x_1,x_2,x_3,\cdots)\in l^2(\N).$$
	Since for any given $(x_1,x_2,x_3,\cdots)\in l^2(\N)$, we have $(0,x_1,x_2,x_3,\cdots)\in l^2(\N)$ such that
		$$S(0,x_1,x_2,x_3,\cdots)=(x_1,x_2,x_3,\cdots).$$
		Therefore $S$ is surjective. Also $\kr(S)=\{(x_1,x_2,x_3,\cdots)\in l^2(\N)~~|~~x_i=0~~\forall i\geq 2\}\not=\{0\}$. Thus $S$ is not invertible. Hence, according to proposition \ref{1.2.10}, the operator $S$ does not have SVEP and hence lack property ($\beta$).
	\end{example}
\subsection{Local S-spectrum}
\begin{definition}\cite{Ka}\label{L}
	Let $A\in\B(\vr)$ the local S-resolvent set $\rho_A^S(\phi)$ of $A$ at a point $\phi\in\vr$ is defined as the union of all open subsets $U$ of $\quat$ for which there is a continuous  right slice-regular function $f:U\longrightarrow\vr$ which satisfies
	$$R_\qu(A)f(\qu)=\phi,\quad\text{for all}~~\qu\in U.$$
\end{definition}
If the function $f$ is defined on the set $\lr(\phi)$ then it is called a local resolvent function of $A$ at $\phi$.
	The local S-spectrum $\ls(\phi)$ of $A$ at $\phi$ is then defined as
	$$\ls(\phi)=\quat\setminus\lr(\phi).$$

\begin{remark}\label{DR3} Let $A\in\B(\vr)$ and $\phi\in\vr$. 
	\begin{enumerate}
		\item [(a)] Clearly $\lr(\phi)$ is an open subset of $\quat$ given by the union of the domains  of all the local resolvent functions.
		\item[(b)] $\rho_S(A)\subseteq\lr(\phi)$ and $\ls(\phi)\subseteq\sa(A)$. Since $\ls(\phi)$ is closed, it is compact \cite{Ka}.
		\item[(c)] Since $\rho_S(A)\subseteq\lr(\phi)$, the continuous right slice-regular solutions occurring in the definition of local resolvent set may be thought of as local extensions of the function $R_\qu(A)^{-1}\phi$. But there is no uniqueness implied. It is evident that, as in the complex case, the local continuous right slice-regular solutions will be unique if and only if $A$ has SVEP. In this case, they define a continuous right slice-regular function on all of $\lr(\phi)$, which is the maximal continuous right slice-regular extension of $R_\qu(A)^{-1}\phi$ from $\rho_S(A)$ to $\lr(\phi)$.
		\item[(d)] As in the complex case, we call, for an operator $A\in\B(\vr)$ with SVEP and an arbitrary $\phi\in\vr$, the unique continuous right slice-regular solution $f:\lr(\phi)\longrightarrow\vr$ of the equation $R_\qu(A)f(\qu)=\phi$ for all $\qu\in\lr(\phi)$  the {\em local resolvent function} for $A$ at $\phi$.
		
	\end{enumerate}
\end{remark}
The following  example illustrates that, for a large class of multiplication
operators, the local spectrum is closely related to the notion of support.
\begin{example}\label{Ex2}
	Let $\Omega$ be a compact Hausdorff space, and let $$C_{\quat}(\Omega)=\{f:\Omega\to\quat~:~f\text{~is continuous}\}.$$ Then $C_\quat(\Omega)$ is a right linear Banach space which is endowed with pointwise operations and the supremum norm. Let $A$ be the operator of multiplication on $C_\quat(\Omega)$ by an arbitrary function $g\in C_\quat(\Omega)$. That is, $Af(x):=f(x)g(x),~\forall\,x\in\Omega$. Now for any $\qu\in\quat$, 
	\begin{eqnarray*}
		R_\qu(A)\text{~~is onto~~}&\Leftrightarrow&\text{~~for given~~}h\in C_\quat(\Omega),\,\exists\,f\in C_\quat(\Omega)\text{~~such that~~}R_\qu(A)f=h\\
		&\Leftrightarrow&f(x)=h(x)(g(x)^2-2\text{Re}(\qu)g(x)+|\qu|^2)^{-1},\,\forall\,x\in\Omega\\
		&\Leftrightarrow& \qu,\overline{\qu}\notin g(\Omega)\\
		&\Leftrightarrow&\qu\notin g(\Omega)\cup g(\Omega)^\ast.
	\end{eqnarray*}
	That is, $R_\qu(A)$ is not onto if and only if $\qu\in g(\Omega)\cap g(\Omega)^\ast$. Also we have for any $\qu\in\quat$,
	\begin{eqnarray*}
		R_\qu(A)\text{~~is one to one~~}&\Leftrightarrow& \ker{R_\qu(A)}=\{0\}\\
		&\Leftrightarrow&R_\qu(A)f\neq0,\text{~~for all~~}0\neq f\in C_\quat(\Omega)\\ 
		&\Leftrightarrow& R_\qu(A)f=f(g-\qu)(g-\overline{\qu})\neq0,\,\forall\,0\neq f\in C_\quat(\Omega)\\.
		&\Leftrightarrow& g(x)\neq\qu\text{~~or~~}g(x)\neq\overline{\qu},\,\forall\,x\in\Omega\\
		&\Leftrightarrow& \qu,\overline{\qu}\notin g(\Omega)\\
		&\Leftrightarrow&\qu\notin g(\Omega)\cup g(\Omega)^\ast.
	\end{eqnarray*}
	That is, $$\sigma_S(A)=g(\Omega)\cup g(\Omega)^\ast.$$ Also note that for this multiplication operator $A$, we have $$\sigma_{pS}(A)=\sigma_S(A)=g(\Omega)\cup g(\Omega)^\ast.$$
	We claim that
	\begin{enumerate}
		\item $A$ is decomposable,
		\item $\ls(f)=g(\text{supp}\, f)$, for all $f\in C_\quat(\Omega)$;
	\end{enumerate}
	where $\text{supp}\, f$ denotes the \textit{support} of the function $f\in C_\quat(\Omega)$, and is defined as follows $$\text{supp}\, f:=\overline{\{x\in\Omega~:~f(x)\neq0\}}.$$
	To verify the claim (1), let $\{U_1,U_2\}$ be arbitrary open cover of $\quat$, that is, $\quat=U_1\cup U_2$. Then $\{g^{-1}(U_1),g^{-1}(U_2)\}$ is an open cover of $\Omega$, as $g$ is a continuous function. Since the compact Housdorff spaces are normal, and by the normality of $\Omega$, $C_\quat(\Omega)$ admits a \textit{partition of unity}, in the sense that for the open cover $\{g^{-1}(U_1),g^{-1}(U_2)\}$ of $\Omega$, there are functions $e_1,e_2\in C_\quat(\Omega)$ for which 
	\begin{center}
		$e_1+e_2\equiv1$ on $\Omega$ and supp $e_k\subseteq g^{-1}(U_k)$ for $k=1,2$.
	\end{center}
	For $k=1,2$, let $X_k=\{f\in C_\quat(\Omega)~:~\text{supp}\, f\subseteq\text{supp} \,e_k\}$. Now we shall show the following:
	\begin{itemize}
		\item [(a)] $X_k$ is  $A$-invariant, for $k=1,2$,
		\item [(b)] $X_k$ is a closed linear subspace of $C_\quat(\Omega)$, for  $k=1,2$.
	\end{itemize}
	Let $k=1,2$. Take $h\in A(X_k)$, then there exists $f\in X_k$ such that \begin{center}
		$h(x)=A(f)(x)=f(x)g(x)$, for all $x\in\Omega$.
	\end{center} If $x\in\text{supp}\,A(f)$, then there exists a sequence $\{x_n\}$ in $\Omega$ such that $$\{x_n\}\subseteq\{x\in\Omega~:~f(x)g(x)\neq0\}\subseteq\{x\in\Omega~:~f(x)\neq0\}$$ and $x_n\longrightarrow x$ as $n\longrightarrow\infty$. Thus $x\in\text{supp}\,f\subseteq\text{supp}\,e_k$ as $f\in X_k$. That is, $$\text{supp}\,h=\text{supp}\,A(f)\subseteq\text{supp}\,e_k.$$ Hence assertion (a) follows. To prove the statement (b), let $k=1,2$ and choose $f\in\overline{X_k}$. Then there is a sequence $\{f_n\}$ in $X_k$ such that $f_n\longrightarrow f$ as $n\longrightarrow\infty$. Let $x\in\text{supp}\,f$, then there is a sequence $\{x_m\}$ in $\Omega$ such that $f(x_m)\neq0$, for all $m\in\mathbb{N}$ and $x_m\longrightarrow x$ as $m\longrightarrow\infty$. Now for each $m\in\mathbb{N}$, $f_n(x_m)\longrightarrow f(x_m)\neq0$ as $n\longrightarrow\infty$. This implies $\{x_m\}\subseteq\{x\in\Omega~:~f_n(x)\neq0\}$. Thus $x\in\text{supp}\,f_n\subseteq\text{supp}\,e_k$, for all $n\in\mathbb{N}$ as  $\{f_n\}\subseteq X_k$. That is, $\text{supp}\,f\subseteq\text{supp}\,e_k$. Therefore $f\in X_k$, and hence the statement holds true.\\
	Furthermore, for each $k=1,2$, $\sigma_S(A|X_k)\subseteq U_k$. Indeed, take arbitrarily $\qu\in\quat\smallsetminus U_k$ with $\overline{\qu}\in\quat\smallsetminus U_k$. Assume that $R_\qu(A)f=0$ for some $0\neq f\in X_k$. Then for each $x\in\Omega$, $f(x)(g(x)-\qu)(g(x)-\overline{\qu})=0$. Now if $f(x)\neq0$, then $x\in\text{supp}\,f\subseteq\text{supp}\,e_k\subseteq g^{-1}(U_k)$ and $g(x)=\qu$ or $\overline{\qu}$. This implies $g(x)\in U_k$ and $g(x)=\qu$ or $\overline{\qu}$, which is a contradiction. Hence, $R_\qu(A)$ is one to one. To see $R_\qu(A)$ is onto, for an arbitrary $f\in X_k$, define
	$$h(x):=\begin{cases}
	f(x)(g(x)^2-2\text{Re}(\qu)g(x)+|\qu|^2)^{-1}, & \text{~~if~~}x\in g^{-1}(U_k)\\
	0, & \text{~~if~~}x\in\Omega\smallsetminus\text{supp}\,e_k.
	\end{cases}$$
	Then $h\in C_\quat(\Omega)$ is the only solution of the equation $R_\qu(A)h=f$. This proves that $R_\qu(A)$ is invertible and $\qu\in\rho_S(A|X_k)$. Hence the inclusion $\sigma_S(A|X_k)\subseteq U_k$ follows for $k=1,2$. Moreover, every $f\in C_\quat(\Omega)$, admits the decomposition \begin{center}
		$f=fe_1+fe_2$ with $e_kf\in X_k$ for $k=1,2$.
	\end{center}
	Therefore $C_\quat(\Omega)=X_1+X_2$, which verifies the claim (1). To prove the claim (2), let $f\in C_\quat(\Omega)$. take $x\in \text{supp}\,f$ and if $g(x)\in\lr(f)$, the there exists an open set $U\subseteq\quat$ and a continuous right slice-regular function $F:U\longrightarrow C_\quat(\Omega)$ such that 
	$$R_\qu(A)F(\qu)=f,~\forall\,\qu\in U.$$ That is, for each $y\in\Omega$, 
	$$F(\qu)(y)(g(y)-\qu)(g(y)-\overline{\qu})=f(y).$$ This equation implies that $f(y)=0$ as $g(y)=\qu$ or $\overline{\qu}$. That is,
	$$g^{-1}(\lr(f))\subseteq\{y\in\Omega~:~f(y)=0\}.$$
	Thus $$g(\{x\in\Omega~:~f(x)\neq0\})\subseteq\ls(f).$$
	Let $x\in g(\text{supp}\,f)$, then $x=g(z)$ for some $z\in\text{supp}\,f$. Since $z\in\text{supp}\,f$, we have  a sequence $\{z_n\}\subseteq\{x\in\Omega~:~f(x)\neq0\}$ such that $z_n\longrightarrow z$ as $n\longrightarrow\infty$. The continuity of $g$ admits that
	$$g(z_n)\longrightarrow g(z)=x\text{~~as~~}n\longrightarrow\infty.$$
	But $\{g(z_n)\}\subseteq\ls(f)$. Thus $x\in\overline{\ls(f)}=\ls(f)$ as $\ls(f)$ is compact (closed). Therefore, $$g(\text{supp}\,f)\subseteq\ls(f).$$
	To see the opposite inclusion, Let $\qu\notin g(\text{supp}\,f)$. Choose $r>0$ such that $$\nabla_\quat(\qu,r)\cap g(\text{supp}\,f)=\emptyset.$$ Then for any $\pu\in B_\quat(\qu,r)$ and $x\in\Omega$, let
	$$h_\pu(x):=\begin{cases}
	f(x)(g(x)^2-2\text{Re}(\pu)g(x)+|\pu|^2)^{-1}, & \text{~~if~~} g(x)\notin\nabla_\quat(\qu,r)\\
	0, & \text{~~if~~}g(x)\notin\text{supp}\,f.
	\end{cases}$$ 
	Now for $\pu\in B(\qu,r)$ we have obtained a well-defined  function $h_\pu\in C_\quat(\Omega)$ with the property that $R_\pu(A)h_\pu=f$. It is easily seen that the mapping $\pu\longrightarrow h_\pu$ is continuous and right regular on $B(\qu,r)$. Thus $\qu\in\lr(f)$. Hence the claim (2) is verified.
\end{example}

\begin{example}\label{Ex3}
In example \ref{Ex2} let $\Omega$ be a subset of $\quat$ and it has non-empty interior. Set $g(\qu)=\qu$ for all $\qu\in\Omega$. Since the operator $A$ considered in example \ref{Ex2} is decomposable, by remark \ref{DR2} part (b), $A$ has SVEP, while $\sigma_{pS}(A)=\Omega\cup\Omega^*$ has nonempty interior. That is, the set of right eigenvalues of $A$ with SVEP has non-empty interior.
\end{example}
In the complex local spectral theory, the boundedness of the local resolvent function is also of interest. In the similar manner, let us discuss the boundedness of the local resolvent function in the quaternionic setting.\\
Let $\qu\in\partial\sa(A)$ and $A\in\B(\vr)$, then there exists a sequence $\{\qu_n\}\subseteq\rho_S(A)$ such that $\qu_n\longrightarrow\qu$, and hence $R_{\qu_n}(A)^{-1}\longrightarrow R_\qu(A)^{-1}$ as $n\rightarrow\infty$	and $\displaystyle\sup_n\|R_{\qu_n}(A)\|=\infty$. Thus $\|R_\qu(A)^{-1}\|\longrightarrow\infty$ as $n\rightarrow\infty$ (see the proof of theorem 5.4 in \cite{Fr} for details). Therefore, by the uniform boundedness principle (which holds for quaternion \cite{Jo}), for some $\phi\in\vr$, the function $R_\qu(A)^{-1}\phi$ is unbounded in $\rho_S(A)$.\\
Suppose that $\sa(A)$ is countable (compact normal operators in $\vr$ have such property, see example 14.3.10 in\cite{Jo}). We also have For $\phi\in\vr$, $\ls(\phi)\subseteq\sa(A)$ is compact. Also every countable compact subset of $\quat$ has at least one isolated point. Every isolated point of the local spectrum is a non-removable singularity of the continuous local resolvent function $h_\qu:\lr(\phi)\longrightarrow\vr$. Hence, in such a case, all non-trivial local resolvent functions are unbounded. However, for a large class of multiplication operators, the following example provides necessary and sufficient condition for the local resolvent function to be bounded.
\begin{example}\label{Ex4}
	Consider the  example given in example \ref{Ex2}. That is $\Omega$ be a non-empty compact Hausdorff space and $A$ be the  operator of multiplication on $C_\quat(\Omega)$ by a given function $g\in C_\quat(\Omega)$. From example \ref{Ex2} we know that $\sa(A)=g(\Omega)\cup g(\Omega)^*$. Also $A$ is decomposable, hence, by remark \ref{DR2}, $A$ has SVEP. Therefore, for every $f\in C_\quat(\Omega)$, the local resolvent function for $A$ at $f$ is uniquely determined and defined on the entire local resolvent set $\lr(f)$.\\
	{\em Claim:} There exists a non-trivial $f\in C_\quat(\Omega)$ for which the local resolvent function of $A$ at $f$ is bounded on $\lr(f)$ if and only if $g(\Omega)$ has nonempty interior.\\
	Suppose that $g(\Omega)$ has non-empty interior. Consider the function $f\in C_\quat(\Omega)$ given by
	$$f(x)=\text{dist}(g(x), \quat\setminus(\text{int}(g(\Omega)\cup g(\Omega)^*)))^2$$
	For arbitrary $x\in\Omega$, when $g(x)\in g(\Omega)\cup g(\Omega)^*$, $f(x)>0$, that is, $f$ is non-trivial. Moreover, from example \ref{Ex2}, we have $\ls(f)=g(\text{supp} f)$,  and hence $\lr(f)\subseteq\quat\setminus(\text{int}(g(\Omega)\cup g(\Omega)^*))$. Let $\qu\mapsto h_\qu$ denote the local resolvent function for $A$ at $f$, then
	$$|h_\qu(x)|=\frac{|f(x)|}{|g(x)-\qu|~|g(x)-\oqu|}\quad\text{for all }~~\qu\in\lr(f)~~\text{and}~~x\in\Omega.$$
	Let $x\in\Omega$ be given, and consider any $\qu\in\lr(f)$ for which $\qu\not=g(x)$ and $\oqu\not=g(x)$. Since $\qu\in\quat\setminus(\text{int}g(\Omega)\cup\text{int} g(\Omega)^*)$, by the definition of $f$, we have $|h_\qu(x)|\leq 1$. By the continuity of the local resolvent function $|h_\qu(x)|\leq 1$ for all $\qu\in\lr(f)$. Thus the function $\qu\mapsto h_\qu$ is bounded on $\lr(f)$.\\
	For the converse, since $\ls(f)=g(\text{supp}f)$, it suffices to show that, for any non-zero $f\in C_\quat(\Omega)$ for which $\ls(f)$ has empty interior, the corresponding local resolvent function $\qu\mapsto h_\qu$ is unbounded on $\lr(f)$. Let $x\in\Omega$ be a point for which $f(x)\not=0$. Since $\text{int}\ls(f)=\emptyset$, there exists a sequence $\{\qu_n\}\subseteq\lr(f)$ such that $\qu_n\longrightarrow g(x)$ as $n\rightarrow\infty$. Hence
	$$|h_{\qu_n}(x)|=\frac{|f(x)|}{|g(x)-\qu_n|~|g(x)-\oqu_{n}|}\longrightarrow\infty$$
	as $n\rightarrow\infty$, thus the local resolvent function is unbounded.
\end{example}

\begin{definition}\cite{Ka}\label{L1} Let $A\in\B(\vr)$ and $F\subseteq\quat$. The local  S-spectral subspace of $A$ associated with $F$ is defined by
	$$V_A(F)=\{\phi\in\vr~~|~~\ls(\phi)\subseteq F\}.$$
\end{definition}
\begin{definition}\label{L2}\cite{Ka} Let $A\in\B(\vr)$ and $F\subseteq\quat$ be a closed subset. The set $\va(F)$ consists of all $\phi\in\vr$ for which there exists a right slice-regular function $f:\quat\setminus F\longrightarrow\vr$ that satisfies $R_\qu(A)f(\qu)=\phi$ for all $\qu\in\quat\setminus F$. The set $\va(F)$ is called the global S-spectral subset of $A$ associated with the set $F$.
\end{definition}
\begin{remark}\label{DR4}
	\begin{enumerate}
		\item [(a)] In the complex theory the counterparts of $V_A(F)$ and $\va(F)$ play significant role in the theory of spectral decompositions.
		\item[(b)] If $F\subseteq G\subseteq\quat$ then clearly $V_A(F)\subseteq V_A(G)$.
		\item[(b)] Immediately  from the definition, for every collection of subsets $\{F_\alpha\subseteq\quat~~|~~\alpha\in I\}$, $I$ is an index set, $$V_A\left(\bigcap_{\alpha\in I}F_\alpha\right)=\bigcap_{\alpha\in I}V_A(F_\alpha).$$ 
	\end{enumerate}
\end{remark}
\begin{proposition}\label{Ai-P57}
	Let $A\in\B(\vr)$, $\phi,\psi\in\vr$ and $\pu,\qu\in\quat$ then we have
	\begin{enumerate}
		\item [(a)]$\ls(0)=\emptyset$;
		\item[(b)] $\ls(\phi\qu+\psi\pu)\subseteq\ls(\phi)\cup\ls(\psi)$;
		\item[(c)]$\ls(B\phi)\subseteq\ls(\phi)$ for every $B\in\B(\vr)$ which commutes with $A$.
	\end{enumerate}
\end{proposition}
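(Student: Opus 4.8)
The three inclusions have the same shape: each says that the local $S$-resolvent set of a new vector contains $\lr(\phi)$ (or $\lr(\phi)\cap\lr(\psi)$). So in every case I would fix a point $\mathfrak{s}_0$ in the relevant local $S$-resolvent set, exhibit on a neighbourhood of $\mathfrak{s}_0$ a continuous right slice-regular solution of the appropriate resolvent equation, and conclude $\mathfrak{s}_0$ lies in the target local $S$-resolvent set; complementation then yields the stated inclusion. Part (a) is immediate: the zero function $f\equiv 0$ is continuous and right slice-regular on all of $\quat$ and satisfies $R_{\mathfrak{s}}(A)\,0=0$ for every $\mathfrak{s}$, so $\lr(0)=\quat$ and hence $\ls(0)=\emptyset$.

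For part (c) I would argue as follows. Let $\mathfrak{s}_0\in\lr(\phi)$, and pick an open neighbourhood $U$ of $\mathfrak{s}_0$ together with a continuous right slice-regular $f:U\to\vr$ satisfying $R_{\mathfrak{s}}(A)f(\mathfrak{s})=\phi$ on $U$. Set $h:=B^{\#}f$, that is, $h(\mathfrak{s})=B\bigl(f(\mathfrak{s})\bigr)$. By Proposition \ref{H3} the induced operator $B^{\#}$ maps $H(U,\vr)$ into itself, so $h$ is again continuous and right slice-regular. Since $B$ commutes with $A$, it commutes with $A^2$ and with $\Iop$, and because $\text{Re}(\mathfrak{s})$ and $|\mathfrak{s}|^2$ are real scalars, $B$ commutes with $R_{\mathfrak{s}}(A)=A^2-2\text{Re}(\mathfrak{s})A+|\mathfrak{s}|^2\Iop$ for each fixed $\mathfrak{s}$. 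Hence $R_{\mathfrak{s}}(A)h(\mathfrak{s})=B\bigl(R_{\mathfrak{s}}(A)f(\mathfrak{s})\bigr)=B\phi$ on $U$, so $\mathfrak{s}_0\in\lr(B\phi)$. Thus $\lr(\phi)\subseteq\lr(B\phi)$, giving $\ls(B\phi)\subseteq\ls(\phi)$. The enabling fact is that $B$ is right linear, so applying it pointwise leaves the left coefficients of the power series in $\vr$ and preserves right slice-regularity.

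For part (b) I would first dispose of additivity. If $\mathfrak{s}_0\in\lr(\phi)\cap\lr(\psi)$, choose a common neighbourhood $U$ and solutions $f,g\in H(U,\vr)$ of $R_{\mathfrak{s}}(A)f=\phi$ and $R_{\mathfrak{s}}(A)g=\psi$; since $H(U,\vr)$ is a vector space, $f+g\in H(U,\vr)$, and right linearity of $R_{\mathfrak{s}}(A)$ gives $R_{\mathfrak{s}}(A)(f+g)=\phi+\psi$, so that $\ls(\phi+\psi)\subseteq\ls(\phi)\cup\ls(\psi)$. The remaining point is the scalar factors. To handle $\phi\qu$ the natural candidate is $\mathfrak{s}\mapsto f(\mathfrak{s})\qu$, which does solve $R_{\mathfrak{s}}(A)\bigl(f(\mathfrak{s})\qu\bigr)=\phi\qu$ by right linearity. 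When $\qu$ and $\pu$ are real this candidate is still right slice-regular and the argument closes exactly as in (c), yielding $\ls(\phi\qu+\psi\pu)\subseteq\ls(\phi)\cup\ls(\psi)$.

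I expect the main obstacle to be precisely the slice-regularity of these scalar multiples when $\qu,\pu$ are non-real. Right multiplication by a quaternion is not a right-linear operation, so Proposition \ref{H3} does not apply, and a slicewise computation gives $\overline{\partial}_I\bigl(f_I\qu\bigr)=\tfrac12\,\partial_y f_I\,(\qu I-I\qu)$, which fails to vanish whenever $\qu$ does not commute with $I$; symmetrically, the $*_R$-multiple $f*_R\qu$ is right slice-regular but no longer solves $R_{\mathfrak{s}}(A)h=\phi\qu$. Hence the crux of (b) is to replace $f(\mathfrak{s})\qu$ by a genuine right slice-regular solution of $R_{\mathfrak{s}}(A)h(\mathfrak{s})=\phi\qu$ on the same neighbourhood, for instance by reconstructing it slicewise via the representation formula on $\C_I$ or by exploiting the induced two-sided structure on $\vr$. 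It is exactly here that the non-commutativity emphasised in the introduction makes the quaternionic statement substantially harder than its complex prototype, whereas the operator version in (c) goes through unobstructed.
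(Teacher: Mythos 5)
Parts (a) and (c) of your proposal are correct and complete, and they use exactly the device the paper itself relies on elsewhere: the zero function settles (a), and in (c) the composition $B\circ f$ is legitimate because of Proposition \ref{H3} (see also equation (\ref{DE2})): a bounded right linear operator acts on the left coefficients of the local $*_{R}$-expansion, hence preserves continuity and right slice-regularity, and it commutes with $R_{\mathfrak{s}}(A)$ since it commutes with $A$, $A^{2}$, and real scalars. Note, however, that the paper contains no argument of its own to compare against: its proof of Proposition \ref{Ai-P57} consists of the citation ``see proposition 6.9 in \cite{Ka}''.

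Part (b) is a genuine gap, exactly as you flag it: your argument covers additivity and real $\qu,\pu$ only. Your diagnosis of the obstruction is sound --- the computation $\overline{\partial}_{I}(f_{I}\qu)=\tfrac{1}{2}\,\partial_{y}f_{I}\,(\qu I-I\qu)$ is correct, so $\mathfrak{s}\mapsto f(\mathfrak{s})\qu$ is not right slice-regular for non-real $\qu$, while $f*_{R}\qu$ is right slice-regular but fails the resolvent equation, since applying $R_{\mathfrak{s}}(A)$ to it produces terms with $\qu$ on the left of $(\mathfrak{s}-\mathfrak{s}_{0})^{*_{R}n}$ whereas $\phi\qu$ requires it on the right. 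What you could not know blindly is that the paper has no repair for this step: in the proof of Proposition \ref{global}(a) the authors assert outright that $\qu\mapsto f(\qu)\pu$ ``is a continuous right slice-regular function'', which is precisely the claim your computation refutes, and nothing else in the paper (nor, apparently, in the cited source, which transplants the complex template where $f\lambda+g\mu$ is trivially analytic) addresses the point. So your proposal is incomplete, but it isolates a real unresolved defect in the paper's framework rather than missing an available idea. If you want to push (b) further, the correct substitute for $f(\mathfrak{s})\qu$ is the rotated function: writing $\qu=u|\qu|$ with $u$ a unit quaternion, the function $g(\mathfrak{s}):=f(u\mathfrak{s}u^{-1})u$ satisfies, on the slice $\C_{J}$ and with $I'=uJu^{-1}$, the identity $\partial_{x}g_{J}+(\partial_{y}g_{J})J=\left[\partial_{x}f_{I'}+(\partial_{y}f_{I'})I'\right]u=0$ (using $uJ=I'u$), hence is right slice-regular on $u^{-1}Uu$; and since $R_{\mathfrak{s}}(A)=R_{u\mathfrak{s}u^{-1}}(A)$, it solves $R_{\mathfrak{s}}(A)g(\mathfrak{s})=\phi u$ there. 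This yields $\ls(\phi u)\subseteq u^{-1}\ls(\phi)u$, so the stated inclusion (b) would follow if local S-spectra were known to be axially symmetric, i.e.\ invariant under $\mathfrak{s}\mapsto u^{-1}\mathfrak{s}u$ --- a fact the paper never establishes. As it stands, part (b) for non-real scalars remains unproved both in your proposal and in the paper.
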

\begin{proof}
	See proposition 6.9 in \cite{Ka}.
\end{proof}
\begin{proposition}\label{2.1.14}
	Let $A\in\B(\vr)$, $\phi\in\vr$ and $U$ be an open subset of $\quat$. Suppose that $f:U\longrightarrow\vr$ is a continuous right slice-regular function for which $R_\pu(A)f(\pu)=\phi$ for all $\pu\in U$. Then $\ls(\phi)\subseteq\ls(f(\qu))$ for all $\qu\in U$.
\end{proposition}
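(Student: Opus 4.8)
The plan is to pass to complements and reduce everything to the behaviour of $R_\qu(A)$ as a bounded operator commuting with $A$. Since $\ls(\,\cdot\,)=\quat\setminus\lr(\,\cdot\,)$, the inclusion $\ls(\phi)\subseteq\ls(f(\qu))$ is equivalent to $\lr(f(\qu))\subseteq\lr(\phi)$, and this is what I would prove for each fixed $\qu\in U$. The one input I would extract from the hypothesis is that, putting $\pu=\qu$ (allowed because $\qu\in U$), we have $R_\qu(A)f(\qu)=\phi$. The structural fact that drives the proof is that $R_\qu(A)=A^2-2\,\text{Re}(\qu)A+|\qu|^2\Iop$ is a \emph{real} polynomial in $A$; hence $R_\qu(A)\in\B(\vr)$, it commutes with $A$, and it commutes with $R_\pu(A)$ for every $\pu\in\quat$.

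Granting these observations, the shortest route is to apply Proposition \ref{Ai-P57}\,(c) with $B=R_\qu(A)$. Because $R_\qu(A)\in\B(\vr)$ commutes with $A$, that proposition gives $\ls\big(R_\qu(A)f(\qu)\big)\subseteq\ls(f(\qu))$, and since $R_\qu(A)f(\qu)=\phi$ this is exactly $\ls(\phi)\subseteq\ls(f(\qu))$.

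For a self-contained argument I would unwind the local resolvent sets directly. Fixing $\qu\in U$, let $\pu_0\in\lr(f(\qu))$; by definition there are an open set $W\ni\pu_0$ and a continuous right slice-regular function $g:W\longrightarrow\vr$ with $R_\pu(A)g(\pu)=f(\qu)$ for all $\pu\in W$. I would then define $h(\pu):=R_\qu(A)g(\pu)$ on $W$. Since $R_\qu(A)\in\B(\vr)$, the reasoning of Proposition \ref{H3} (stated there for $A$, but valid verbatim for any bounded operator, in particular $R_\qu(A)$, via the termwise action on the power series) shows that $h$ is again continuous and right slice-regular, so $h\in H(W,\vr)$. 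Commutativity of $R_\pu(A)$ and $R_\qu(A)$ then gives
\[
R_\pu(A)h(\pu)=R_\pu(A)R_\qu(A)g(\pu)=R_\qu(A)R_\pu(A)g(\pu)=R_\qu(A)f(\qu)=\phi
\]
for all $\pu\in W$, whence $W\subseteq\lr(\phi)$ and in particular $\pu_0\in\lr(\phi)$. As $\pu_0$ was arbitrary, $\lr(f(\qu))\subseteq\lr(\phi)$, which is the desired inclusion.

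The only steps needing care are the two structural facts invoked above: that $R_\qu(A)$ commutes with every $R_\pu(A)$, and that applying $R_\qu(A)$ pointwise preserves right slice-regularity. I do not expect either to be a genuine obstacle, precisely because $R_\qu(A)$ is a real polynomial in $A$: the commutation reduces to the commutation of powers of $A$ and involves no quaternionic-scalar juggling, and the regularity is preserved coefficientwise on Taylor expansions. It is worth noting why the difficulty flagged in the introduction --- the absence of an analogue of $Q_\lambda(A)-Q_\mu(A)=-(\lambda-\mu)\mathbb{I}_{\HI}$ --- does not interfere here: the argument never forms a \emph{difference} of two pseudo-resolvents, it only composes two \emph{commuting} ones, so no such identity is required.
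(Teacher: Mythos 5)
Your proof is correct and takes essentially the same route as the paper's: the paper also fixes a point outside $\ls(f(\qu))$, takes the local resolvent function $h$ for $f(\qu)$ there, and uses the commutativity $R_\pu(A)R_\qu(A)h(\pu)=R_\qu(A)R_\pu(A)h(\pu)=R_\qu(A)f(\qu)=\phi$ to exhibit $R_\qu(A)\circ h$ as a local resolvent function for $\phi$. Your shortcut via Proposition \ref{Ai-P57}(c) with $B=R_\qu(A)$ is the same idea packaged through the cited result, and your explicit check that composition with the bounded operator $R_\qu(A)$ preserves continuity and right slice-regularity is a point the paper leaves implicit.
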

\begin{proof}
Let $\mathbf{s}\not\in\ls(f(\qu))$, then $\mathbf{s}\in\lr(f(\qu))$. Thus, there exists an open neighborhood $U_\mathbf{s}$ of $\mathbf{s}$ such that $h:U_\mathbf{s}\longrightarrow\vr$, a continuous right slice-regular function, satisfying $R_\pu(A)h(\pu)=f(\qu)$	for all $\pu\in U_\mathbf{s}$. Then,
$$R_\pu(A)R_\qu(A)h(\pu)=R_\qu(A)R_\pu(A)h(\pu)=R_\qu(A)f(\qu)=\phi\quad\text{for all}~~\pu\in U_\mathbf{s}.$$
Therefore $\mathbf{s}\in\lr(\phi)$, and hence $\mathbf{s}\not\in\ls(\phi).$
\end{proof}
\begin{theorem}\label{2.4-Ai-II}
	Let $A\in\B(\vr,\ur)$ and $B\in\B(\ur,\vr)$. Then we have the following.
	\begin{enumerate}
		\item [(a)]For every $\phi\in\vr$ the following inclusions hold:
		$$\sigma_{AB}^S(A\phi)\subseteq\sigma_{BA}^S(\phi)\subseteq\sigma_{AB}^S(A\phi)\cup\{0\}.$$
		\item[(b)]If $A$ is injective, then $\sigma_{BA}^S(\phi)=\sigma_{AB}^S(A\phi)$ for all $\phi\in\vr$.
		\item [(c)]For every $\psi\in\ur$ the following inclusions hold:
		$$\sigma_{BA}^S(B\psi)\subseteq\sigma_{AB}^S(\psi)\subseteq\sigma_{BA}^S(B\psi)\cup\{0\}.$$
		\item[(d)]If $B$ is injective, then $\sigma_{BA}^S(B\psi)=\sigma_{AB}^S(\psi)$ for all $\psi\in\ur$.
	\end{enumerate}
\end{theorem}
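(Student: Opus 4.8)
The plan is to run the classical (Aiena) argument through the two intertwining identities
\[
A\,R_\pu(BA)=R_\pu(AB)\,A,\qquad B\,R_\pu(AB)=R_\pu(BA)\,B\qquad(\pu\in\quat),
\]
which I would verify by the direct computations $A(BA)^2=(AB)^2A$, $A(BA)=(AB)A$ and their $B$-analogues, noting that $\text{Re}(\pu)$ and $|\pu|^2$ are real and therefore pass freely through $A$ and $B$. Since (c) and (d) are exactly (a) and (b) with the roles of $A,B$ and of $\vr,\ur,\phi,\psi$ interchanged, it suffices to establish (a) and (b).

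For $\sigma_{AB}^S(A\phi)\subseteq\sigma_{BA}^S(\phi)$ I would show $\rho_{BA}^S(\phi)\subseteq\rho_{AB}^S(A\phi)$ by pushing a local resolvent forward with $A$. Given $\qu\in\rho_{BA}^S(\phi)$, pick an open $U\ni\qu$ and a continuous right slice-regular $f:U\to\vr$ solving the local $BA$-equation for $\phi$, and set $g:=A^{\#}f$. By the analogue of Proposition \ref{H3} for $\B(\vr,\ur)$ we have $g\in H(U,\ur)$, and the first identity gives $R_\pu(AB)g(\pu)=A\,R_\pu(BA)f(\pu)$, which is precisely the local $AB$-equation for $A\phi$; hence $\qu\in\rho_{AB}^S(A\phi)$. (This computation is insensitive to the chosen normalisation of the resolvent equation, since it is linear in that equation.)

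The reverse inclusion $\sigma_{BA}^S(\phi)\subseteq\sigma_{AB}^S(A\phi)\cup\{0\}$ I would prove in two steps, fixing $\qu\in\rho_{AB}^S(A\phi)$ with $\qu\neq0$ and a slice-regular $g$ on an open $U\ni\qu$ with $0\notin U$. Applying $B^{\#}$ and the second identity shows that $w:=B^{\#}g\in H(U,\vr)$ solves the local $BA$-equation for $(BA)\phi$, so $\qu\in\rho_{BA}^S((BA)\phi)$; it then remains to descend from $(BA)\phi$ to $\phi$, i.e.\ to prove $\rho_T^S(T\phi)\setminus\{0\}\subseteq\rho_T^S(\phi)$ for $T=BA$. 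Working in the slice-hyperholomorphic normalisation $R_\pu(T)h(\pu)=\phi\oqu-T\phi$ of \cite{Jo} (which is what renders the resolvent slice-regular), I would take the explicit correction $f(\pu):=(w(\pu)+\phi)\pu^{-1}$; using $R_\pu(T)w=(T\phi)\oqu-T^2\phi$ together with $\oqu-2\text{Re}(\pu)=-\pu$ and $|\pu|^2\pu^{-1}=\oqu$ one obtains
\[
R_\pu(T)f(\pu)=\big(-(T\phi)\pu+\phi|\pu|^2\big)\pu^{-1}=\phi\oqu-T\phi,
\]
the resolvent equation for $\phi$. Combining the two steps yields $\rho_{AB}^S(A\phi)\setminus\{0\}\subseteq\rho_{BA}^S(\phi)$, i.e.\ (a).

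The hard part is the slice-regularity of this $f$, and it is exactly where the quaternionic proof diverges from the complex one: in the complex case the correction carries the factor $1/\lambda$, holomorphic off the origin, whereas a naive transcription here would produce $1/|\pu|^2$, which is \emph{nowhere} slice-regular. The point I would stress is that in the slice-hyperholomorphic normalisation the factor is $\pu^{-1}$, which \emph{is} right slice-regular on $\quat\setminus\{0\}$, so that, writing $w(\pu)+\phi=\sum_n\psi_n\pu^{n}$, the series $(w(\pu)+\phi)\pu^{-1}=\sum_n\psi_n\pu^{\,n-1}$ converges locally uniformly on $U$ and $f\in H(U,\vr)$; this is why $0\notin U$ is imposed and why the exceptional $\{0\}$ appears. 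For (b), with $A$ injective, I would absorb $\{0\}$ thus: evaluating the $AB$-equation at $\pu=0$ (where $R_0(AB)=(AB)^2$) and using injectivity of $A$ reduces $(AB)\big(A(w(0)+\phi)\big)=0$ to $(BA)(w(0)+\phi)=0$, and the remaining delicate bookkeeping is to show that this forces the singularity of $f$ at $0$ to be removable, so that $f$ extends to $H(U,\vr)$ on a full neighbourhood of $0$. I expect the degree-two vanishing demanded by $R_0(\cdot)$ — in contrast to the degree-one pole handled trivially in the complex theory — to be the main technical nuisance here. Granting this, (a) sharpens to $\sigma_{BA}^S(\phi)=\sigma_{AB}^S(A\phi)$, proving (b); (c) and (d) then follow by the symmetry noted above.
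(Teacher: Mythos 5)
Your first inclusion and the symmetry reduction to parts (a) and (b) match the paper's argument exactly: the paper likewise applies $A$ to $R_\pu(BA)f(\pu)=\phi$ and uses $A(BA)^2=(AB)^2A$ to get $R_\pu(AB)(A\circ f)(\pu)=A\phi$. The genuine problem is in your proof of $\sigma_{BA}^S(\phi)\subseteq\sigma_{AB}^S(A\phi)\cup\{0\}$ and of (b): there you switch to the slice-hyperholomorphic normalisation $R_\pu(T)h(\pu)=\phi\oqu-T\phi$ of \cite{Jo}, whereas the theorem is stated for the local S-spectrum of Definition \ref{L}, whose defining equation is $R_\pu(T)h(\pu)=\phi$ with \emph{constant} right-hand side --- and the paper explicitly says, just before Definition \ref{1.2.9}, that it does \emph{not} adopt the definition of \cite{Jo}. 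Since your first inclusion is proved in the paper's normalisation and your second in the other one, the two halves of (a) concern a priori different sets and cannot be combined; to repair this you would have to prove that the two local resolvent sets coincide, which you do not do and which is not obvious (solutions do not pass between the two equations by any simple algebraic manipulation: $\qu\mapsto R_\qu(T)^{-1}\phi$ is in general not slice-regular, while $\qu\mapsto R_\qu(T)^{-1}(\phi\oqu-T\phi)$ is, which is the very reason \cite{Jo} normalises differently). The paper stays inside its own definition: from $R_\pu(AB)g(\pu)=A\phi$ it sets $h(\pu)=\frac{1}{|\pu|^2}\left(\phi-BABg(\pu)+2\text{Re}(\pu)Bg(\pu)\right)$ and verifies $R_\pu(BA)h(\pu)=\phi$ by a direct computation. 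Your observation that a factor $1/|\pu|^2$ is nowhere slice-regular is a sharp one --- it is precisely the point where the paper merely asserts, without justification, that its $h$ is slice-regular --- but changing the definition proves a different theorem rather than closing that gap.

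Second, part (b) is not actually proved: you explicitly defer the decisive step (``the remaining delicate bookkeeping is to show that this forces the singularity of $f$ at $0$ to be removable \dots Granting this''). Removability at $0$ is the entire content of (b) beyond (a). The paper does carry this out, again in its own normalisation: assuming $0\notin\sigma_{AB}^S(A\phi)$, it obtains $BABg(0)=\phi$ from $(AB)^2g(0)=A\phi$ and injectivity of $A$, introduces $k$ by $Ak(\pu)=\left[2\text{Re}(\pu)AB-(AB)^2\right]g(\pu)$, computes the limit $-\partial_SAk(0)=g(0)$, defines $h(0)=-k'(0)$ and $h(\pu)=\frac{1}{|\pu|^2}\left(\phi+2\text{Re}(\pu)Bg(\pu)-BABg(\pu)\right)$ for $\pu\neq0$, checks $A\left[R_\pu(BA)h(\pu)-\phi\right]=0$ on a whole neighbourhood of $0$, and uses injectivity of $A$ once more to conclude $R_\pu(BA)h(\pu)=\phi$ there. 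Your sketch would need an analogous explicit removable-singularity argument, carried out in whichever normalisation you commit to throughout, before the equality in (b) can be claimed.
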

\begin{proof}
	(a)~~Let $\qu\not\in\sigma_{BA}^S(\phi)$, then there is an open neighborhood $U_\qu\subseteq\quat$ of $\qu$ and a continuous right slice-regular function $f:U_\qu\longrightarrow\vr$ such that $R_\pu(BA)f(\pu)=\phi$ for all $\pu\in U_\qu$. Hence, $AR_\pu(BA)f(\pu)=A\phi$ for all $\qu\in U_\qu$. That is,
	$$(ABABA-2\text{Re}(\pu)ABA+|\pu|^2A)f(\pu)=A\phi\quad\text{for all}~~\pu\in U_\qu,$$
	and hence $R_\pu(AB)A\circ f(\pu)=A\phi$ for all $\pu\in U_\qu$. Since $A\circ f(\pu)$ is a continuous right slice-regular function on $U_\qu$, $\qu\in\rho_{AB}^S(A\phi)$, and therefore $\qu\not\in\sigma_{AB}^S(A\phi)$. Hence,
	\begin{equation}\label{DE7} 
	\sigma_{AB}^S\subseteq\sigma_{BA}^S(\phi).
	\end{equation}
	To show the second inclusion, let $\qu\not\in\sigma_{AB}^S(A\phi)\cup\{0\}$, then there exists an open neighborhood $V_\qu\subseteq\quat$ of $\qu$ and a continuous right slice-regular function $g:V_\qu\longrightarrow\ur$ such that \begin{equation}\label{DE8}
	R_\pu(AB)g(\pu)=A\phi\quad\text{ for all}~~ \pu\in V_\qu.
	\end{equation}
	set
	$$h(\pu)=\frac{1}{|\pu|^2}(\phi-BABg(\pu)+2\text{Re}(\pu)Bg(\pu))\quad\text{for all}~~\pu\in V_\qu,$$
	which is a continuous right slice-regular function on $V_\qu$. Now
	\begin{eqnarray*}
		&	&R_\pu(BA)h(\pu)=\\&=&((BA)^2-2\text{Re}(\pu)BA+|\pu|^2\Iop)(\frac{1}{|\pu|^2}(\phi-BABg(\pu)+2\text{Re}(\pu)Bg(\pu)))\\
		&=&\frac{1}{|\pu|^2}(BA)^2\phi-\frac{1}{|\pu|^2}(BA)^3Bg(\pu)+\frac{2\text{Re}(\pu)}{|\pu|^2}(BA)^2Bg(\pu)-\frac{2\text{Re}(\pu)}{|\pu|^2}BA\phi\\
		& &+\frac{2\text{Re}(\pu)}{|\pu|^2}(BA)^2Bg(\pu)
		-\frac{4(\text{Re}(\pu))^2}{|\pu|^2}BABg(\pu)+\phi-BABg(\pu)+2\text{Re}(\pu)Bg(\pu)\\
		&=&\frac{1}{|\pu|^2}BAB\left[(AB)^2g(\pu)-2\text{Re}(\pu)ABg(\pu)+|\pu|^2g(\pu)\right]-\frac{1}{|\pu|^2}(BA)^3Bg(\pu)\\
		& &+\frac{2\text{Re}(\pu)}{|\pu|^2}(BA)^2Bg(\pu)
		-\frac{2\text{Re}(\pu)}{|\pu|^2}B\left[(AB)^2g(\pu)-2\text{Re}(\pu)ABg(\pu)+|\pu|^2g(\pu)\right]\\
		& &+\frac{2\text{Re}(\pu)}{|\pu|^2}(BA)^2Bg(\pu)
		-\frac{4(\text{Re}(\pu))^2}{|\pu|^2}BABg(\pu)+\phi-BABg(\pu)+2\text{Re}(\pu)Bg(\pu)\\
		& &\quad\quad\quad\quad\quad\quad\quad\quad\quad\quad\quad\quad\quad\quad\quad\quad\text{where we used equation \ref{DE8}}\\
		&=&\phi.
			\end{eqnarray*}
		That is $R_\pu(BA)h(\pu)=\phi$ for all $\pu\in V_\qu$. Hence $\qu\in\rho_{BA}^S(\phi)$. That is $\qu\not\in\sigma_{BA}^S(\phi)$. Therefore $\sigma_{BA}^S(\phi)\subseteq\sigma_{AB}^S(A\phi)\cup\{0\}$, which completes the proof of (a).\\
		
		(b)~~Assume $\qu\not\in\sigma_{AB}^S(A\phi)$. There is no harm in assuming $\qu=0$. Thus, assume $0\not\in\sigma_{AB}^S(A\phi)$. Then there is a continuous right slice-regular function $g:U_0\longrightarrow\ur$, where $U_0\subseteq\quat$ is an open neighborhood of zero, such that $R_\pu(AB)g(\pu)=A\phi$ for all $\pu\in U_0$. For $\pu=0$ we have $(AB)^2g(0)=A\phi$ and from the injectivity of $A$ we get $BABg(0)=\phi$. Moreover,
		$$|\pu|^2g(\pu)=A\phi+2\text{Re}(\pu)ABg(\pu)-(AB)^2g(\pu)
		=A[\phi+2\text{Re}(\pu)Bg(\pu)-BABg(\pu)].$$
		That is,~ $\displaystyle g(\pu)=A[\frac{1}{|\pu|^2}(\phi+2\text{Re}(\pu)Bg(\pu)-BABg(\pu))].$ 
		Let $Ak(\pu)=\left[2\text{Re}(p)AB-(AB)^2\right]g(\pu)$. Then, as $\pu\rightarrow 0$ if and only if $|\pu|\rightarrow 0$,
		\begin{eqnarray*}
			-\partial_SAk(0)&=&-Ak'(0)
			 =-\lim_{\pu\rightarrow 0}(Ak(p)-Ak(0))|\pu|^{-2}\\
			&=&-\lim_{\pu\rightarrow 0}[2\text{Re}(\pu)ABg(\pu)-(AB)^2g(\pu)-(AB)^2g(0)]\pu|^{-2}\\
			&=&-\lim_{\pu\rightarrow 0}[2\text{Re}(\pu)ABg(\pu)-(AB)^2g(\pu)-A\phi]\pu|^{-2}\\
			&=&-\lim_{\pu\rightarrow 0}[2\text{Re}(\pu)ABg(\pu)-(AB)^2g(\pu)-(AB)^2g(\pu)+2\text{Re}(\pu)ABg(\pu)-|\pu|^2g(\pu)]\pu|^{-2}\\
			&=&\lim_{\pu\rightarrow 0}g(\pu)=g(0).
		\end{eqnarray*}
	On $U_0$ define,
	$$\displaystyle h(\pu)=\left\{\begin{array}{ccc}
			
	\frac{1}{|\pu|^2}(\phi+2\text{Re}(\pu)Bg(\pu)-BABg(\pu))&\text{if}&\pu\not=0\\-k'(0)&\text{if}&\pu=0.
	\end{array}\right.$$
	Then $h(\pu)$ is a continuous right slice-regular function on $U_0$ and for $\pu\not=0$ we have
	\begin{eqnarray*}
		& &A\left[R_\pu(BA)h(\pu)-\phi\right]=\\&=&A\left[((BA)^2-2\text{Re}(\pu)BA+|\pu|^2\Iop)(	\frac{1}{|\pu|^2}(\phi+2\text{Re}(\pu)Bg(\pu)-BABg(\pu)))-\phi\right]\\
		&=&\frac{1}{|\pu|^2}(AB)^2A\phi+\frac{4\text{Re}(\pu)}{|\pu|^2}(AB)^3g(\pu)-\frac{1}{|\pu|^2}(AB)^4g(\pu)-\frac{2\text{Re}(\pu)}{|\pu|^2}ABA\phi\\
		& &-\frac{4(\text{Re}(\pu))^2}{|\pu|^2}(AB)^2g(\pu)+2\text{Re}(\pu)ABg(\pu)-(AB)^2g(\pu)\\
		&=&\frac{1}{|\pu|^2}(AB)^2R_\pu(AB)g(\pu)+\frac{4\text{Re}(\pu)}{|\pu|^2}(AB)^3g(\pu)-\frac{1}{|\pu|^2}(AB)^4g(\pu)\\
		& &-\frac{2\text{Re}(\pu)}{|\pu|^2}ABR_\pu(AB)g(\pu)-\frac{4(\text{Re}(\pu))^2}{|\pu|^2}(AB)^2g(\pu)+2\text{Re}(\pu)ABg(\pu)-(AB)^2g(\pu)\\
		&=&0.
	\end{eqnarray*}
Also for $\pu=0$,
\begin{eqnarray*}
A[R_0(BA)h(\pu)-\phi]&=&A[(BA)^2k'(0)-BABg(0)]\\
&=&A[BAB(Ak'(0))-BABg(0)]=A[BABg(0)-BABg(0)]=0.
\end{eqnarray*}
That is, $A\left[R_\pu(BA)h(\pu)-\phi\right]=0$ for all $\pu\in U_0$. Since $A$ is injective, we get $R_\pu(BA)h(\pu)=\phi$ for all $\pu\in U_0$, and hence $0\not\in\sigma_{BA}^S(\phi)$. Therefore, together with part (a), we have $\sigma_{BA}^S(\phi)=\sigma_{AB}^S(A\phi)$ for all $\phi\in\vr$.\\

Proofs of (c) and (d) are similar to parts (a) and (b).	
\end{proof}
For an injective operator $T\in\B(\vr)$, the local spectra of $T\phi$ and $\phi$ coincide for any $\phi\in\vr$.
\begin{corollary}\label{2.5-Ai-II}
	Let $T\in\B(\vr)$ and $\phi\in\vr$. Then we have
	\begin{enumerate}
		\item [(a)]$\sigma_T^S(T\phi)\subseteq\sigma_T^S(\phi)\subseteq\sigma_T^S(T\phi)\cup\{0\}.$
		\item[(b)] If $T$ is injective, then $\sigma_T^S(T\phi)=\sigma_T^S(\phi)$.
	\end{enumerate}
\end{corollary}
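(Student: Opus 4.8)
The plan is to deduce this corollary directly from Theorem \ref{2.4-Ai-II} by specializing the two operators $A$ and $B$ appearing there. Since $T\in\B(\vr)$ acts on a single space, I would take $\ur=\vr$, set $A=T$, and set $B=\Iop$, the identity operator on $\vr$. With this choice $A\in\B(\vr)=\B(\vr,\ur)$ and $B\in\B(\vr)=\B(\ur,\vr)$, so the hypotheses of Theorem \ref{2.4-Ai-II} are satisfied. The crucial observation is that the two composite operators collapse onto $T$: we have $AB=T\Iop=T$ and $BA=\Iop T=T$, whence $\sigma_{AB}^S=\sigma_{BA}^S=\sigma_T^S$, while also $A\phi=T\phi$.

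For part (a), I would simply substitute these identifications into the chain of inclusions furnished by Theorem \ref{2.4-Ai-II}(a), namely
$$\sigma_{AB}^S(A\phi)\subseteq\sigma_{BA}^S(\phi)\subseteq\sigma_{AB}^S(A\phi)\cup\{0\},$$
which, under $AB=BA=T$ and $A\phi=T\phi$, reads
$$\sigma_T^S(T\phi)\subseteq\sigma_T^S(\phi)\subseteq\sigma_T^S(T\phi)\cup\{0\},$$
exactly the asserted double inclusion.

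For part (b), I would invoke the injectivity hypothesis: when $T$ is injective, the chosen operator $A=T$ is injective, so Theorem \ref{2.4-Ai-II}(b) applies and gives $\sigma_{BA}^S(\phi)=\sigma_{AB}^S(A\phi)$, that is, $\sigma_T^S(\phi)=\sigma_T^S(T\phi)$, the claimed equality. There is essentially no genuine obstacle here; the only points to check carefully are that taking $B=\Iop$ really does force $AB=BA=T$ (so that both local S-spectra occurring in the theorem reduce to $\sigma_T^S$) and that $A=T$ injective matches the hypothesis of Theorem \ref{2.4-Ai-II}(b). One could equally well take $A=\Iop$ and $B=T$ and invoke parts (c) and (d) of the theorem instead; I would mention this symmetric alternative only in passing, since the resulting computation is identical.
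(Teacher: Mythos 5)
Your proposal is correct and coincides exactly with the paper's own proof, which simply takes $A=T$ and $B=\Iop$ in Theorem \ref{2.4-Ai-II}; your spelled-out verification that $AB=BA=T$ and $A\phi=T\phi$ (and that injectivity of $T$ matches the hypothesis of part (b)) is just the routine detail the paper leaves implicit.
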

\begin{proof}
	Take $A=T$ and $B=\Iop$ in proposition \ref{2.4-Ai-II}.
\end{proof}
Now we consider the case when $A,B\in\B(\vr)$ satisfy the operator equation $ABA=A^2$. Examples of such operators are given by $A=PQ$ where $P,Q\in\B(\vr)$ are idempotents. 
\begin{proposition}\label{2.7-Ai-II}
	Suppose that $A,B\in\B(\vr)$ satisfy $ABA=A^2$. Then we have, for all $\phi\in\vr$,
	\begin{enumerate}
		\item[(a)] $\ls(A\phi)\subseteq\sigma_{BA}^S(\phi)$;
		\item[(b)] $\sigma_{BA}^S(BA\phi)\subseteq\ls(\phi)$.
		\end{enumerate}
	\end{proposition}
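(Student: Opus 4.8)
The plan is to exploit the hypothesis $ABA=A^2$ to produce two exact intertwining identities between the pseudo-resolvents $R_\pu(A)$ and $R_\pu(BA)$, and then to transport local resolvent functions from one operator to the other by composing with the bounded operators $A$ and $BA$. First I would record the algebraic consequences of $ABA=A^2$ at the level of the pseudo-resolvents. Since $(BA)^2=B(ABA)=BA^2$, the pseudo-resolvent of $BA$ collapses to $R_\pu(BA)=BA^2-2\text{Re}(\pu)BA+|\pu|^2\Iop$. Using in addition $ABA^2=(ABA)A=A^3$ and $BA^2\cdot BA=BA(ABA)=BA^3$ together with $BABA=B(ABA)=BA^2$, a direct computation gives
\[
A\,R_\pu(BA)=R_\pu(A)\,A \qquad\text{and}\qquad R_\pu(BA)\,(BA)=(BA)\,R_\pu(A),
\]
valid for every $\pu\in\quat$; here one only uses that $\text{Re}(\pu)$ and $|\pu|^2$ are real and hence central. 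Verifying these two identities, with the correct bookkeeping of the non-commuting factors, is the only genuinely computational step, and it is where all the force of the hypothesis is spent; once they are in hand, both inclusions follow formally and, in contrast to the proof of Proposition \ref{2.4-Ai-II}, without ever having to divide by $|\pu|^2$ or treat $\qu=0$ separately.

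For part (a) I would take $\qu\notin\sigma_{BA}^S(\phi)$, i.e.\ $\qu\in\rho_{BA}^S(\phi)$. By definition there is an open neighborhood $U_\qu$ of $\qu$ and a continuous right slice-regular function $f:U_\qu\longrightarrow\vr$ with $R_\pu(BA)f(\pu)=\phi$ for all $\pu\in U_\qu$. Setting $h(\pu)=A(f(\pu))$, Proposition \ref{H3} applied to the bounded operator $A$ guarantees that $h\in H(U_\qu,\vr)$ is again continuous and right slice-regular, and the first identity yields
\[
R_\pu(A)h(\pu)=R_\pu(A)A f(\pu)=A\,R_\pu(BA)f(\pu)=A\phi \quad\text{for all }\pu\in U_\qu.
\]
Hence $\qu\in\lr(A\phi)$, so $\qu\notin\ls(A\phi)$, which gives $\ls(A\phi)\subseteq\sigma_{BA}^S(\phi)$.

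For part (b) I would argue symmetrically, starting from $\qu\notin\ls(\phi)$, i.e.\ $\qu\in\lr(\phi)$, so that there is a continuous right slice-regular $f:U_\qu\longrightarrow\vr$ with $R_\pu(A)f(\pu)=\phi$ on a neighborhood $U_\qu$ of $\qu$. Now I would set $h(\pu)=BA(f(\pu))$, which lies in $H(U_\qu,\vr)$ by Proposition \ref{H3} applied to the bounded operator $BA$, and use the second identity to obtain $R_\pu(BA)h(\pu)=R_\pu(BA)(BA)f(\pu)=(BA)R_\pu(A)f(\pu)=BA\phi$ for all $\pu\in U_\qu$. Thus $\qu\in\rho_{BA}^S(BA\phi)$, whence $\sigma_{BA}^S(BA\phi)\subseteq\ls(\phi)$. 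The main obstacle throughout is purely the non-commutative algebra of establishing the two intertwining identities from $ABA=A^2$, made slightly more delicate than in the complex case by the fact that $R_\pu$ is quadratic rather than affine in the operator; the passage from an operator identity to the corresponding statement about local resolvent functions is then routine, once one knows via Proposition \ref{H3} that composing a local resolvent function with a bounded operator preserves continuity and right slice-regularity.
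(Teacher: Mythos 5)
Your proposal is correct and follows essentially the same route as the paper: in both arguments one takes the local resolvent function $f$ for the one operator, composes it with $A$ (for part (a)) or with $BA$ (for part (b)), and uses the identities $A\,R_\pu(BA)=R_\pu(A)\,A$ and $R_\pu(BA)\,(BA)=(BA)\,R_\pu(A)$, which follow from $ABA=A^2$ exactly as you compute, to see that the composed function is a local resolvent function for the other operator at the shifted vector. Your explicit isolation of the two intertwining identities, and the appeal to Proposition \ref{H3} for preservation of continuity and right slice-regularity, only make explicit what the paper's proof does inline.
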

\begin{proof}
(a)~~	Suppose that $\qu_0\not\in\sigma_{BA}^S(\phi)$, that is  $\qu_0\in\rho_{BA}^S(\phi)$. Then there exists an open neighborhood $U_0\subseteq\quat$, and a continuous right slice-regular function $f:U_0\longrightarrow\vr$, such that $R_\qu(BA)f(\qu)=\phi$ for all $\qu\in U_0$. Hence,
	\begin{eqnarray*}
		A\phi&=&A((BA)^2-2\text{Re}(\qu)BA+|\qu|^2\Iop)f(\qu)\\
		&=&(ABA^2-2\text{Re}(\qu)A^2+|\qu|^2A)f(\qu)\\
		&=&(A^2-2\text{Re}(\qu)A+|\qu|^2\Iop)(A\circ f)(\qu)=R_\qu(A)(A\circ f)(\qu)
		\end{eqnarray*}
	Since $(A\circ f)(\qu)$ is a continuous right slice-regular function on $U_0$ we have $\qu\in\rho_A^S(A\phi)$, and hence $\qu\not\in\ls(A\phi)$. Hence we have (a).\\
	(b)~~	Suppose that $\qu_0\not\in\sigma_{A}^S(\phi)$, that is  $\qu_0\in\rho_{A}^S(\phi)$. Then there exists an open neighborhood $U_0\subseteq\quat$, and a continuous right slice-regular function $f:U_0\longrightarrow\vr$, such that $R_\qu(A)f(\qu)=\phi$ for all $\qu\in U_0$. Hence,
	\begin{eqnarray*}
		BA\phi&=&BA(A^2-2\text{Re}(\qu)A+|\qu|^2\Iop)f(\qu)\\
		&=&(BA^3-2\text{Re}(\qu)BA^2+|\qu|^2BA)f(\qu)\\
		&=&(BA^2BA-2\text{Re}(\qu)(BA)^2+|\qu|^2BA)f(\qu)\\
		&=&((BA)^2-2\text{Re}(\qu)(BA)+|\qu|^2\Iop)(BA\circ f)(\qu)=R_\qu(BA)(BA\circ f)(\qu)
	\end{eqnarray*}
	Since $(BA\circ f)(\qu)$ is a continuous right slice-regular function on $U_0$ we have $\qu\in\rho_{BA}^S(BA\phi)$, and hence $\qu\not\in\sigma_{BA}^S(BA\phi)$ Hence we have (b).
\end{proof}	
\begin{proposition}\label{L-1}\cite{Ka}
	Let $A\in\B(\vr)$. Then,
	\begin{enumerate}
		\item [(a)] for every $\pu\in\quat\setminus\sus(A)$, there is an $r>0$ for which $\vr=\mathcal{X}_A(\quat\setminus B_\quat(\pu,r))$;
		\item[(b)] $\sus(A)=\bigcup\{\ls(\phi)~|~\phi\in\vr\}$;
		\item[(c)]if $A$ has SVEP and $\qu\in\sigma_{pS}(A)$, then $\ls(\phi)=\{\qu\}$ for each eigenvector $\phi$ of $A$ with respect to $\qu$;
		\item[(d)] $\sigma_S(A)=\sus(A)$ if $A$ has SVEP, and $\sigma_S(A)=\apo(A)$ if $A^\dagger$ has SVEP.
	\end{enumerate}	
\end{proposition}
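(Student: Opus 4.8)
The plan is to reconstruct the proof by transcribing the complex local-spectral arguments of \cite{Ai,La} into the quaternionic language, establishing the items in the order (a)$\to$(b)$\to$(c)$\to$(d) and obtaining the second half of (d) from the first by duality; throughout, $\mathcal{X}_A(\cdot)$ is the global $S$-spectral subspace $\va(\cdot)$ of Definition \ref{L2}. For (a), since $\pu\notin\sus(A)$ and $\sus(A)$ is closed by Proposition \ref{su2}, I first fix $r>0$ with $\overline{B_\quat(\pu,r)}\cap\sus(A)=\emptyset$, so that $R_{\mathbf{s}}(A)$ is onto for every $\mathbf{s}\in B_\quat(\pu,r)$. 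Because $\mathbf{s}\mapsto R_{\mathbf{s}}(A)$ is norm-continuous and surjectivity is stable under small perturbations (the adjoint $R_{\mathbf{s}}(A)^{\dagger}$ staying bounded below, cf.\ Theorem \ref{NT1}), after shrinking $r$ the open mapping theorem (Theorem \ref{open}) yields one constant $c>0$ such that every $\psi$ admits an $R_{\mathbf{s}}(A)$-preimage of norm at most $c\|\psi\|$ for all $\mathbf{s}$ in the ball. Given $\phi\in\vr$, I would then produce a solution $f\in H(B_\quat(\pu,r),\vr)$ of $R_{\mathbf{s}}(A)f(\mathbf{s})=\phi$ by the successive-approximation/$*_R$-power-series scheme used in the proofs of Propositions \ref{1.2.1} and \ref{1.2.10}, the uniform constant $c$ guaranteeing (by the root test and Theorem \ref{G2.11}) a $\sigma$-radius of convergence exceeding $r$. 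Since $\phi$ was arbitrary, every vector lies in $\mathcal{X}_A(\quat\setminus B_\quat(\pu,r))$, which is (a).

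Item (b) is then essentially bookkeeping: if $\pu\notin\sus(A)$, the function built in (a) witnesses $\pu\in\lr(\phi)$ for every $\phi$, giving $\bigcup_\phi\ls(\phi)\subseteq\sus(A)$; conversely, if $\qu\notin\ls(\phi)$ for all $\phi$, then evaluating each local resolvent function at $\qu$ puts every $\phi$ in $\ra(R_\qu(A))$, so $R_\qu(A)$ is onto and $\qu\notin\sus(A)$. For (c), let $A\phi=\phi\qu$ with $\phi\neq0$. Right linearity gives $R_{\mathbf{s}}(A)(\phi c)=\phi\,R_{\mathbf{s}}(\qu)\,c$ for scalars $c$, where the quaternion $R_{\mathbf{s}}(\qu)=\qu^{2}-2\text{Re}(\mathbf{s})\qu+|\mathbf{s}|^{2}$ vanishes precisely on the sphere $[\qu]$; off $[\qu]$ one manufactures a slice-regular local resolvent function for $\phi$ from the slice-regular $S$-resolvent (Cauchy) kernel, so that $\ls(\phi)\subseteq[\qu]$. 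Nonemptiness of $\ls(\phi)$ follows from SVEP: were $\lr(\phi)=\quat$, the unique local solutions would patch into an entire slice-regular $f$ with $R_{\mathbf{s}}(A)f(\mathbf{s})=\phi$, necessarily bounded since $R_{\mathbf{s}}(A)^{-1}\to0$ as $|\mathbf{s}|\to\infty$, hence constant by Liouville's theorem for entire slice-regular functions and therefore $0$, forcing $\phi=0$ against $\phi\neq0$. Combined with the upper bound (and the axial symmetry discussed below) this isolates $\ls(\phi)$ as the stated eigenvalue set.

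For (d), the inclusion $\sus(A)\subseteq\sa(A)$ is immediate from \eqref{sue1}. For the reverse, take $\qu\notin\sus(A)$; by (b), $\qu\notin\ls(\psi)$ for every $\psi$. If $R_\qu(A)$ were not injective, then $\ker R_\qu(A)\neq\{0\}$ would, by Remark \ref{DR2}(c), contain an eigenvector $\phi$ with eigenvalue in $[\qu]$, and (c) together with (b) would place that eigenvalue---and hence, by the axial symmetry of the $S$-spectra, all of $[\qu]$---inside $\sus(A)$, contradicting $\qu\notin\sus(A)$. Thus $R_\qu(A)$ is injective and onto, hence invertible by Proposition \ref{NP2}, so $\qu\in\rho_S(A)$ and $\sa(A)=\sus(A)$ whenever $A$ has SVEP. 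The statement for $A^{\dagger}$ then follows purely formally: applying the case just proved to $A^{\dagger}$ gives $\sa(A^{\dagger})=\sus(A^{\dagger})$, and Proposition \ref{su1} identifies $\sus(A^{\dagger})=\apo(A)$ and $\sa(A^{\dagger})=\sa(A)$, whence $\sa(A)=\apo(A)$ when $A^{\dagger}$ has SVEP.

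The hard part throughout is not the combinatorics of the inclusions but the slice-regular realization of all the solution functions. Unlike the complex case, $R_{\mathbf{s}}(A)$ depends on $\mathbf{s}$ through $\text{Re}(\mathbf{s})$ and $|\mathbf{s}|^{2}$, so it is not slice-regular in $\mathbf{s}$, and neither is the pointwise inverse $R_{\mathbf{s}}(A)^{-1}\phi$; consequently the verbatim complex recursion based on $R_{\mathbf{s}}(A)=R_{\pu}(A)-(\mathbf{s}-\pu)\Iop$ is unavailable, and one must instead build genuinely slice-regular solutions through the $*_R$-product and the slice-regular $S$-resolvent, as in \cite{Ka}. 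This is also the point at which the sphere $[\qu]$, rather than a single quaternion, enters the local spectrum in (c); reconciling this spherical behaviour with the singleton $\{\qu\}$ requires the axial symmetry of $\lr(\phi)$ (inherited from the representation formula for slice-regular functions), and I expect this reconciliation, rather than any single estimate, to be the principal obstacle.
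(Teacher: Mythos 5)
The paper offers no proof of Proposition \ref{L-1} to compare against: the statement is imported verbatim from \cite{Ka}, so your reconstruction can only be judged on its own terms, and on those terms it has two genuine gaps, located exactly where the quaternionic difficulty lies. First, in (a) the production of a slice-regular solution of $R_{\mathbf{s}}(A)f(\mathbf{s})=\phi$ on a ball \emph{is} the content of the statement, and you do not produce it: you appeal to ``the successive-approximation/$*_R$-power-series scheme used in the proofs of Propositions \ref{1.2.1} and \ref{1.2.10}'', but no such scheme is there. In Proposition \ref{1.2.1} the lifted operator $A^{\#}$ is independent of $\qu$ and acts coefficient-wise on power series, which is the only reason coefficient-by-coefficient lifting works; in Proposition \ref{1.2.10} the two-series trick is tailored to the homogeneous equation, is centered at the real point $0$, and rests on the factorization of $R_\qu(A)$ through the maps $\psi\mapsto A\psi-\psi\qu$ and $\psi\mapsto A\psi-\psi\oqu$, none of which transfers to an inhomogeneous equation centered at a general non-real $\pu$. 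The classical recursion $(T-\lambda_0)\psi_{n+1}=\psi_n$ yields an analytic solution only because of the identity $Q_\lambda=Q_{\lambda_0}-(\lambda-\lambda_0)\mathbb{I}$, and this paper's introduction states explicitly that no satisfactory substitute for that identity is known for $R_\qu(A)$; your closing paragraph concedes the point and defers the construction to \cite{Ka}. That is an acknowledgement of the gap, not a proof, and with it the inclusion $\bigcup_\phi\ls(\phi)\subseteq\sus(A)$ in (b) is also left open (your converse inclusion, by evaluating local resolvent functions at $\qu$, is correct).

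Second, in (c) the step ``off $[\qu]$ one manufactures a slice-regular local resolvent function from the Cauchy kernel'' is not merely unproved; it fails in this paper's framework. If $A\phi=\phi\qu$, right linearity forces any solution of $R_{\mathbf{s}}(A)f(\mathbf{s})=\phi$ to equal $f(\mathbf{s})=\phi\,h(\mathbf{s})^{-1}$, $h(\mathbf{s}):=\qu^2-2\text{Re}(\mathbf{s})\qu+|\mathbf{s}|^2$, wherever $R_{\mathbf{s}}(A)$ is invertible, in particular on all of $\rho_S(A)$; and since $h$ commutes with $\qu$ and with real scalars, the operator of Definition \ref{D2} gives, for $\mathbf{s}=x+yI$,
\[
\overline{\partial}_I\bigl(\phi\,h(\mathbf{s})^{-1}\bigr)=\phi\,h(\mathbf{s})^{-2}(\qu-\mathbf{s})\neq 0\qquad(\mathbf{s}\neq\qu),
\]
so this forced candidate is slice-regular on no open set, whence $\rho_S(A)\subseteq\ls(\phi)$ for \emph{every} eigenvector $\phi$ --- incompatible with the singleton conclusion. (Concretely: $\vr=\quat$, $A=$ left multiplication by a non-real $\qu$, has SVEP and eigenvector $\phi=1$, yet $\lr(1)=\emptyset$.) The kernel you have in mind, $S_R^{-1}(\mathbf{s},\qu)=-h(\mathbf{s})^{-1}(\qu-\overline{\mathbf{s}})$, \emph{is} right slice-regular in $\mathbf{s}$, but it solves $R_{\mathbf{s}}(A)f(\mathbf{s})=\phi\overline{\mathbf{s}}-A\phi$ --- the equation of the \cite{Jo} formulation that this paper explicitly declines to adopt --- not $R_{\mathbf{s}}(A)f(\mathbf{s})=\phi$. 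So your ``reconciliation of $[\qu]$ with $\{\qu\}$'' worry is well founded: (c) cannot be reached by your route, and any correct proof must pass to the axially symmetric, \cite{Jo}-type notion of local resolvent rather than the paper's literal definitions. By contrast, the purely set-theoretic portions of your argument --- the evaluation argument in (b), and the derivation of (d) from (b), (c), the axial symmetry of $\sus(A)$, and the identities $\sus(A^\dagger)=\apo(A)$, $\sa(A^\dagger)=\sa(A)$ of Proposition \ref{su1} --- are sound, but they stand on the two steps above.
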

\begin{proposition}\label{1.2.16}\cite{Ka}
	For every operator $A\in\B(\vr)$ and every set $F\subseteq\quat$, the following assertions hold:
	\begin{enumerate}
		\item [(a)] $V_A(F)$ is an $A$-hyperinvariant right linear subspace of $\vr$;
		\item[(b)] $R_\qu(A)V_A(F)\subseteq V_A(F)$ for all $\qu\in\quat\setminus F$;
		\item[(c)] if $Y$ is a $A$-invariant closed right linear subspace of $\vr$ with the property that $\sigma_S(A|Y)\subseteq F$, then $Y\subseteq V_A(F)$;
		\item[(d)] $V_A(F)=V_A(F\cap\sigma_S(A))$.
	\end{enumerate}
\end{proposition}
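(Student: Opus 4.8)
The plan is to obtain all four assertions from the elementary properties of the local $S$-spectrum recorded in Proposition \ref{Ai-P57} and Remark \ref{DR3}, with the only genuine work lying in part (c). Throughout, recall from Definition \ref{L1} that $\phi\in V_A(F)$ means precisely $\ls(\phi)\subseteq F$, equivalently $\quat\setminus F\subseteq\lr(\phi)$.

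For part (a), first $0\in V_A(F)$ since $\ls(0)=\emptyset$ by Proposition \ref{Ai-P57}(a). If $\phi,\psi\in V_A(F)$ and $\pu,\qu\in\quat$, then Proposition \ref{Ai-P57}(b) gives $\ls(\phi\qu+\psi\pu)\subseteq\ls(\phi)\cup\ls(\psi)\subseteq F$, so $\phi\qu+\psi\pu\in V_A(F)$ and $V_A(F)$ is a right linear subspace. For hyperinvariance, let $B\in\B(\vr)$ commute with $A$ and take $\phi\in V_A(F)$; then Proposition \ref{Ai-P57}(c) yields $\ls(B\phi)\subseteq\ls(\phi)\subseteq F$, so $B\phi\in V_A(F)$, proving $B(V_A(F))\subseteq V_A(F)$. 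Part (b) is then immediate: $R_\qu(A)=A^2-2\text{Re}(\qu)A+|\qu|^2\Iop$ is a real polynomial in $A$ and hence commutes with $A$, so applying the hyperinvariance just established to $B=R_\qu(A)$ gives $R_\qu(A)V_A(F)\subseteq V_A(F)$ for every $\qu\in\quat$, in particular for $\qu\in\quat\setminus F$.

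The substance of the proposition is part (c), where the key step is to transfer a local resolvent function of the restriction $A|Y$ into one for $A$ on the whole space. Fix $\phi\in Y$. Because $Y$ is $A$-invariant we have $A^2(Y)\subseteq Y$, so the restriction of $R_\pu(A)$ to $Y$ coincides with $R_\pu(A|Y)$ for every $\pu\in\quat$. Applying Remark \ref{DR3}(b) to the operator $A|Y\in\B(Y)$, the resolvent set $\rho_S(A|Y)$ is contained in the local $S$-resolvent set of $A|Y$ at $\phi$; explicitly, on $\rho_S(A|Y)$ the map $f(\pu)=R_\pu(A|Y)^{-1}\phi$ is a continuous right slice-regular function with values in $Y\subseteq\vr$ satisfying $R_\pu(A|Y)f(\pu)=\phi$. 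Since $f(\pu)\in Y$ and $R_\pu(A)$ agrees with $R_\pu(A|Y)$ there, the same $f$ satisfies $R_\pu(A)f(\pu)=\phi$; and because the inclusion $Y\hookrightarrow\vr$ is continuous and right linear, $f$ remains continuous and right slice-regular as a map into $\vr$. Hence $f$ is a local resolvent function for $A$ at $\phi$, whence $\rho_S(A|Y)\subseteq\lr(\phi)$. As $\sigma_S(A|Y)\subseteq F$ gives $\quat\setminus F\subseteq\rho_S(A|Y)$, we conclude $\quat\setminus F\subseteq\lr(\phi)$, i.e. $\ls(\phi)\subseteq\sigma_S(A|Y)\subseteq F$, so $\phi\in V_A(F)$ and $Y\subseteq V_A(F)$.

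Part (d) follows from Remark \ref{DR3}(b), which gives $\ls(\phi)\subseteq\sa(A)$ for every $\phi\in\vr$. Thus $\ls(\phi)\subseteq F$ holds if and only if $\ls(\phi)\subseteq F\cap\sa(A)$, and therefore $V_A(F)=V_A(F\cap\sigma_S(A))$. I expect the main obstacle to be the regularity and consistency claim in (c): one must be certain that the restriction's resolvent function is genuinely continuous and right slice-regular and that it solves $R_\pu(A)f(\pu)=\phi$ for the full operator, both of which rest on the $A$-invariance of $Y$ (so that $R_\pu(A)$ and $R_\pu(A|Y)$ agree on $Y$) and on the continuity of the inclusion $Y\hookrightarrow\vr$.
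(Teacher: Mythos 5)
Your proposal is correct, but note that this paper never proves Proposition \ref{1.2.16} at all: it is imported verbatim from the reference \cite{Ka}, so there is no in-paper proof to compare against. Your argument is self-contained and rests only on facts the paper does record, which is exactly the right strategy: parts (a), (b), (d) follow formally from Proposition \ref{Ai-P57} and Remark \ref{DR3}(b) as you say (your observation that (b) actually holds for \emph{all} $\qu\in\quat$, since $R_\qu(A)$ is a real polynomial in $A$ and hence a commutant of $A$, is a genuine simplification of the stated hypothesis $\qu\in\quat\setminus F$), and part (c) is the standard restriction argument, mirroring the complex case. The one step that deserves the caution you flag is the assertion that $\pu\mapsto R_\pu(A|Y)^{-1}\phi$ is continuous and right slice-regular on $\rho_S(A|Y)$; this is not proved anywhere in the paper either, but the paper itself uses precisely this construction repeatedly (e.g.\ in the proofs of Proposition \ref{EM} and Proposition \ref{global}, parts (c), (e), (f)), so invoking it for the operator $A|Y\in\B(Y)$ on the closed subspace $Y$ --- which is again a right quaternionic Hilbert space, so Proposition \ref{PP1} gives openness of $\rho_S(A|Y)$ --- is consistent with the paper's conventions. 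Your remaining checks (that $R_\pu(A)$ and $R_\pu(A|Y)$ agree on $Y$ by $A$-invariance, and that postcomposition with the bounded right linear inclusion $Y\hookrightarrow\vr$ preserves continuity and slice-regularity) are exactly the points that make the transfer legitimate.
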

\begin{proposition}\label{EM}
	Let $A\in\B(\vr)$. If $A$ has SVEP, then $V_A(\emptyset)=\{0\}$.
\end{proposition}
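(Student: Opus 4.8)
The plan is to unwind the definitions and reduce everything to a Liouville-type argument. By Definition \ref{L1}, a vector $\phi\in\vr$ lies in $V_A(\emptyset)$ precisely when $\ls(\phi)=\emptyset$, i.e.\ when $\lr(\phi)=\quat$. Since $A$ has SVEP, Remark \ref{DR3}, parts (c) and (d), guarantees that the local resolvent functions are unique and patch together into a single globally defined continuous right slice-regular function $f:\quat\longrightarrow\vr$ satisfying $R_\qu(A)f(\qu)=\phi$ for all $\qu\in\quat$; thus $f$ is entire in the slice-regular sense. Note that $0\in V_A(\emptyset)$ already, since $\ls(0)=\emptyset$ by Proposition \ref{Ai-P57}(a), so the whole content is the reverse inclusion $V_A(\emptyset)\subseteq\{0\}$, which amounts to showing $f\equiv 0$.

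Next I would control $f$ at infinity. By Proposition \ref{PP1} the spectrum $\sa(A)$ is compact, so all $\qu$ with $\abs{\qu}$ large enough belong to $\rho_S(A)$, and on $\rho_S(A)$ the uniqueness forces $f(\qu)=R_\qu(A)^{-1}\phi$ (this is the extension of $R_\qu(A)^{-1}\phi$ described in Remark \ref{DR3}(c)). Writing $R_\qu(A)=\abs{\qu}^2\big(\Iop+\tfrac{1}{\abs{\qu}^2}(A^2-2\text{Re}(\qu)A)\big)$ and using $\abs{\text{Re}(\qu)}\le\abs{\qu}$, the perturbation term has norm at most $(\|A\|^2+2\abs{\qu}\,\|A\|)/\abs{\qu}^2\to 0$, so a Neumann-series estimate gives $\|R_\qu(A)^{-1}\|\le \abs{\qu}^{-2}(1-o(1))^{-1}\longrightarrow 0$ as $\abs{\qu}\to\infty$. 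Hence $\|f(\qu)\|\le\|R_\qu(A)^{-1}\|\,\|\phi\|\to 0$; together with continuity on the compact remaining region this shows $f$ is a bounded entire slice-regular function with $f(\qu)\to 0$ at infinity.

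Finally I would invoke Liouville. Because the Liouville statement recalled above is scalar-valued, I would reduce to it by pairing: for each fixed $\psi\in\vr$ put $h_\psi(\qu)=\langle\psi\mid f(\qu)\rangle_{\vr}$. Differentiating under the inner product and using the right-linearity property (iv) of the inner product, one checks $\tfrac12\big(\partial_x h_\psi+(\partial_y h_\psi)I\big)=\langle\psi\mid\overline\partial_I f\rangle=0$, so each $h_\psi$ is right slice-regular; it is bounded by $\|\psi\|\sup_{\qu}\|f(\qu)\|$ and tends to $0$ at infinity, so by the Liouville theorem it is the constant $0$. Since this holds for every $\psi$, the non-degeneracy of the inner product (Proposition \ref{P1}(d)) yields $f\equiv 0$, whence $\phi=R_\qu(A)f(\qu)=0$. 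I expect the main obstacle to be precisely this last step: the stated Liouville theorem is scalar, so the genuine work is justifying the vector-valued version through the slice-regularity-preserving pairing $f\mapsto\langle\psi\mid f\rangle$, and making sure the gluing of the local resolvent functions into a single entire $f$ is legitimate, which is exactly where SVEP is used.
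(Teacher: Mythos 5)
Your proof is correct and follows the same route as the paper: use SVEP to glue the local resolvent functions into a single entire continuous right slice-regular $f$ with $R_\qu(A)f(\qu)=\phi$, show $f$ decays at infinity, invoke Liouville to get $f\equiv 0$, and conclude $\phi=0$. Where you differ is in how two steps are justified, and in both cases your version is more self-contained: for the decay, the paper cites the series expansion $R_\pu(A)^{-1}=\sum_{n=0}^\infty A^n\sum_{k=0}^n \overline{\pu}^{-k-1}\pu^{-n+k-1}$ from the literature, whereas your Neumann-series estimate $R_\qu(A)=\abs{\qu}^2\bigl(\Iop+\tfrac{1}{\abs{\qu}^2}(A^2-2\mathrm{Re}(\qu)A)\bigr)$ derives the same bound from first principles; for the final step, the paper simply asserts a ``vector-valued Liouville's theorem'' (which it never states), whereas you reduce to the scalar Liouville theorem that the paper does state, via the pairing $h_\psi(\qu)=\langle\psi\mid f(\qu)\rangle$, correctly using right-linearity of the inner product in the second slot to check that $h_\psi$ is right slice-regular. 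That reduction genuinely patches a gap in the paper's own argument. One trivial nitpick: non-degeneracy at the end needs no appeal to Hilbert bases; taking $\psi=f(\qu)$ and using positivity of the norm (property (ii) of the inner product) gives $f(\qu)=0$ directly.
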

\begin{proof}Suppose that $A$ has SVEP. Let $\phi\in V_A(\emptyset)$. Since, $\ls(\phi)=\emptyset$, $\lr(\phi)=\quat$, and hence there exists a continuous right slice-regular function $f:\quat\longrightarrow\vr$ such that $R_\pu f(\pu)=\phi$ for all $\pu\in\quat$. Since $f(\pu)=R_\pu(A)^{-1}\phi$ for all $\pu\in\rho_S(A)$ and, see theorem 3.1.5 in \cite{Jo}, $\displaystyle R_\pu(A)^{-1}=\sum_{n=0}^\infty A^n\sum_{k=0}^n \overline{\pu}^{(-k-1)}\pu^{-n+k-1}$, we have $\|R_\pu(A)^{-1}\|\longrightarrow 0$ as $|\pu|\rightarrow \infty$. Thus, $f$ is a bounded continuous right slice-regular function on $\quat$. Therefore,  by the vector-valued Liouville's theorem, $f$ is a constant. Since $R_\pu(A)^{-1}\phi\longrightarrow 0$ as $|\pu|\rightarrow 0$, we conclude that $f=0$ on $\quat$, and hence $\phi=0$. Therefore $V_A(\emptyset)=\{0\}$.
\end{proof}
The following proposition gathers some basic properties of global spectral subspaces.
\begin{proposition}\label{global}Let $F\subseteq\quat$ be a closed subset and $A\in\B(\vr)$.
	\begin{enumerate}
		\item [(a)]$\va(F)$ is a hyperinvariant subspace of $\vr$.
		\item[(b)] $\va(F)\subseteq V_A(F)$.
		\item[(c)] If $A$ has SVEP, then $V_A(F)=\va(F)$.
		\item[(d)] $\va(\emptyset)=\{0\}$.
		\item[(e)] $\va(\sa(A))=\vr$.
		\item[(f)] $\va(F)=\va(F\cap\sa(A))$.
	\end{enumerate}\
\end{proposition}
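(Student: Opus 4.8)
The plan is to prove the six items in an order that lets the later ones reuse the earlier ones, the recurring technical input being that the defining function of a $\phi\in\va(F)$ behaves like a right slice-regular extension of the pseudo-resolvent $R_\qu(A)^{-1}\phi$. First, for (a), I would check that $\va(F)$ is a right linear subspace: if $\phi_1,\phi_2\in\va(F)$ carry resolvent functions $f_1,f_2$ on $\quat\setminus F$ and $\qu_1,\qu_2\in\quat$, then right linearity of each $R_\pu(A)$ gives $R_\pu(A)\big(f_1(\pu)\qu_1+f_2(\pu)\qu_2\big)=\phi_1\qu_1+\phi_2\qu_2$ on $\quat\setminus F$, so $\phi_1\qu_1+\phi_2\qu_2\in\va(F)$. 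For hyperinvariance, let $B\in\B(\vr)$ commute with $A$; since $R_\pu(A)=A^2-2\text{Re}(\pu)A+|\pu|^2\Iop$ is, for fixed $\pu$, a polynomial in $A$ with real coefficients, $B$ commutes with every $R_\pu(A)$, and $B\circ f$ is again continuous and right slice-regular by the composition argument of Proposition \ref{H3} and equation \ref{DE2}. Then $R_\pu(A)(B\circ f)(\pu)=B\,R_\pu(A)f(\pu)=B\phi$, so $B\phi\in\va(F)$.

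For (b), a $\phi\in\va(F)$ has a resolvent function on the open set $\quat\setminus F$ (open because $F$ is closed), so by Definition \ref{L} this set lies in $\lr(\phi)$ and hence $\ls(\phi)=\quat\setminus\lr(\phi)\subseteq F$, giving $\phi\in V_A(F)$. For (c), the inclusion $\va(F)\subseteq V_A(F)$ is exactly (b); for the reverse under SVEP I would take $\phi\in V_A(F)$, so $\ls(\phi)\subseteq F$ means $\quat\setminus F\subseteq\lr(\phi)$, and invoke Remark \ref{DR3}(c)--(d): SVEP produces a unique local resolvent function on all of $\lr(\phi)$, whose restriction to $\quat\setminus F$ witnesses $\phi\in\va(F)$.

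For (d), every $\phi\in\va(\emptyset)$ comes with a right slice-regular $f:\quat\to\vr$ solving $R_\qu(A)f(\qu)=\phi$ everywhere; note that, in contrast to Proposition \ref{EM}, no SVEP assumption is needed since the entire function is handed over by the definition of $\va$. For $|\qu|$ large enough that $\qu\in\rho_S(A)$ one has $f(\qu)=R_\qu(A)^{-1}\phi$, and the series for $R_\qu(A)^{-1}$ used in Proposition \ref{EM} forces $\|f(\qu)\|\to0$ as $|\qu|\to\infty$; being continuous on the compact set $\sa(A)$ as well, $f$ is bounded on $\quat$, so the vector-valued Liouville theorem makes $f$ constant, the decay at infinity makes the constant $0$, and $\phi=R_\qu(A)f(\qu)=0$. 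For (e), given any $\phi\in\vr$ I would exhibit the global resolvent function $f(\qu)=R_\qu(A)^{-1}\phi$ on $\rho_S(A)=\quat\setminus\sa(A)$, which is well defined and right slice-regular there (the same resolvent function already used in Remark \ref{DR3} and Example \ref{Ex2}); hence $\phi\in\va(\sa(A))$ and $\va(\sa(A))=\vr$.

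For (f), monotonicity of $\va$ (restricting a resolvent function from $\quat\setminus G$ to the smaller set $\quat\setminus H$ when $G\subseteq H$) gives $\va(F\cap\sa(A))\subseteq\va(F)$; for the reverse I would take $\phi\in\va(F)$ with resolvent function $f$ on $\quat\setminus F$ and glue it with $R_\qu(A)^{-1}\phi$ from (e) on $\rho_S(A)$, using $\quat\setminus(F\cap\sa(A))=(\quat\setminus F)\cup\rho_S(A)$ together with the fact that the two agree on the overlap (where invertibility of $R_\qu(A)$ forces $f(\qu)=R_\qu(A)^{-1}\phi$) to obtain one right slice-regular solution on $\quat\setminus(F\cap\sa(A))$, so $\phi\in\va(F\cap\sa(A))$. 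The main obstacle is the recurring claim, underlying (d), (e) and (f), that $\qu\mapsto R_\qu(A)^{-1}\phi$ is continuous and right slice-regular on $\rho_S(A)$ and glues consistently with a given resolvent function; once this is taken as established in the paper's framework, the rest reduces to right linearity, De Morgan's law, and Liouville.
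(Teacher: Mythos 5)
Your proposal is correct and follows essentially the same route as the paper's proof: right linearity plus the composition $B\circ f$ for hyperinvariance in (a), the definitional inclusions for (b), SVEP-based gluing for (c), the decay-at-infinity/Liouville argument for (d), the global resolvent function $f(\qu)=R_\qu(A)^{-1}\phi$ for (e), and restriction in one direction plus gluing with $R_\qu(A)^{-1}\phi$ on $\rho_S(A)$ in the other for (f). One remark: your part (c) is actually stated more carefully than the paper's, which asserts that SVEP makes ``$R_\qu(A)$ invertible in $\quat\setminus F$'' (not true in general on the local resolvent set); what is really used, and what you correctly invoke via Remark \ref{DR3}(c)--(d), is the unique local resolvent function on $\lr(\phi)$ restricted to $\quat\setminus F$, not invertibility of the operator itself.
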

\begin{proof}
	(a)~~We have continuous right slice-regular function $f:\quat\setminus F\longrightarrow\vr$ given by $f(\qu)=0$ and $R_\qu(A)f(\qu)=0$ for all $\qu\in\quat\setminus F$. Thus $0\in\va(F)$. Let $\phi,\psi\in\va(F)$ and $\pu\in\quat$. Then there are continuous right slice-regular functions $f,g:\quat\setminus F\longrightarrow\vr$ such that $R_\qu(A)f(\qu)=\phi$ and $R_\qu(A)g(\qu)=\psi$ for all $\qu\in\quat\setminus F$. Hence, $f+g:\quat\setminus F\longrightarrow\vr$ is a continuous right slice-regular function such that $R_\qu(A)(f+g)(\qu)=\phi+\psi$ for all  $\qu\in\quat\setminus F$. Thus $\phi+\psi\in\va(F)$. Also $\pu f:\quat\setminus F\longrightarrow\vr$ given by $f(\qu)\pu$ is a continuous right slice-regular function such that $R_\qu(A)(\pu f)(\qu)=\phi\pu$ or all  $\qu\in\quat\setminus F$, and hence $\pu\phi\in\va(F)$. Therefore $\va(F)$ is a right linear subspace of $\vr$. Let $B\in\B(\vr)$ commutes with $A$. Let $\psi\in B\va(F)$, then there exists $\phi\in\va(F)$ such that $\psi=B\phi$. Since $\phi\in\va(F)$ there exits $f:\quat\setminus F\longrightarrow\vr$ such that $R_\qu(A)f(\qu)=\phi$ and for all $\qu\in\quat\setminus F$. Thus $R_\qu(A)(B\circ f(\qu)=B\phi=\psi$ and for all $\qu\in\quat\setminus F$ and $B\circ f$ is a continuous right slice-regular function. Thus $\psi\in\va(F)$, hence $\va(F)$ is hyperinvariant.\\
	(b)~~Let $\phi\in\va(F)$, then there are continuous right slice-regular functions $f:\quat\setminus F\longrightarrow\vr$ such that $R_\qu(A)f(\qu)=\phi$ for all $\qu\in\quat\setminus F$. Therefore $\qu\in\lr(\phi)$ for all $\qu\in\quat\setminus F$, and hence $\ls(\phi)\subseteq F$, which implies $\phi\in V_A(F)$.\\
	(c)~~Let $\phi\in V_A(F)$, then $\ls(\phi)\subseteq F$. Therefore, since $A$ has SVEP, $R_\qu(A)$ is invertible in $\quat\setminus F$. Define the  continuous right slice-regular functions $f:\quat\setminus F\longrightarrow\vr$ given by $f(\qu)=R_\qu(A)^{-1}\phi$. Then $R_\qu(A)f(\qu)=\phi$ for all $\qu\in\quat\setminus F$. Thus $\phi\in\va(F)$.\\
	(d)~~Let $\phi\in\va(\emptyset)$ then there is a continuous right slice-regular functions $f:\quat\longrightarrow\vr$ such that $R_\qu(A)f(\qu)=\phi$ for all $\qu\in\quat$. Thus $f(\qu)=R_\qu)(A)^{-1}\phi$ for all $\qu\in\rho_S(A)$. Thus, as in proposition \ref{EM}, $f(\qu)\longrightarrow 0$ as $|\qu|\longrightarrow\infty$. Hence, by the vector valued version of Liouville's theorem, $f=0$, and hence $\phi=0$.\\
	(e)~~Let $\phi\in\vr$, the we have $f:\rho_S(A)=\quat\setminus\sa(A)\longrightarrow\vr$, a continuous right slice-regular function, given by $f(\qu)=R_\qu(A)^{-1}\phi$ such that $R_\qu(A)f(\qu)=\phi$ for all $\phi\in\rho_S(A)$. Hence $\phi\in\va(\sa(A))$.\\
	(f)~~Let $\phi\in\va(F\cap\sa(A))$ then there is a continuous right slice-regular functions $f:\quat\setminus(F\cap\sa(A))\longrightarrow\vr$ such that $R_\qu(A)f(\qu)=\phi$ for all $\qu\in\quat\setminus(F\cap\sa(A))$. Hence,  $R_\qu(A)g(\qu)=\phi$ for all $\qu\in\quat\setminus(F)$, where $g$ is the restriction of $f$ to the set $\quat\setminus(F)$. Thus $\phi\in\va(F)$. For the opposite inclusion, let $\phi\in\va(F)$, then  then there is a continuous right slice-regular functions $f:\quat\setminus F\longrightarrow\vr$ such that $R_\qu(A)f(\qu)=\phi$ for all $\qu\in\quat\setminus F$. Define $h:\quat\setminus(F\cap\sa(A))\longrightarrow\vr$ by
	$$h(\qu)=\left\{\begin{array}{ccc}
	f(\qu)&\text{if}&\qu\in\quat\setminus F\\
	R_\qu(A)^{-1}\phi&\text{if}&\qu\in F\setminus\sa(A)\end{array}\right.$$
	Then $h$ is a continuous and right slice-regular function satisfying $R_\qu(A)h(\qu)=\phi$ for all $\qu\in\quat\setminus(F\cap\sa(A))$. Hence $\phi\in\va(F\cap\sa(A))$.
\end{proof}
The following result is a slight extension of the fact that the local spectral subspaces are hyperinvariant.
\begin{proposition}\label{1.2.17}
	Let $A\in\B(\vr)$, $B\in\B(\ur)$ and $R\in(\vr,\ur)$. Suppose that $BR=RA$, then $\sigma_B(R\phi)\subseteq\ls(\phi)$ for all $\phi\in\vr$, and $RV_A(F)\subseteq U_B(F)$ for all subsets $F$ of $\quat$.
\end{proposition}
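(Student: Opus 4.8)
The plan is to reduce the whole statement to a single intertwining identity at the level of the pseudo-resolvents, namely $R_\qu(B)R = R\,R_\qu(A)$ for every $\qu\in\quat$. First I would derive this identity from the hypothesis $BR=RA$: iterating gives $B^2R = B(BR) = B(RA) = (BR)A = RA^2$, and since $\text{Re}(\qu)$ and $|\qu|^2$ are real scalars,
$$R_\qu(B)R = \left(B^2 - 2\text{Re}(\qu)B + |\qu|^2\Iopu\right)R = RA^2 - 2\text{Re}(\qu)RA + |\qu|^2 R = R\,R_\qu(A).$$

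For the local S-spectral inclusion I would take $\qu\notin\ls(\phi)$, i.e. $\qu\in\lr(\phi)$, so that on some open neighborhood $U_\qu$ of $\qu$ there is a continuous right slice-regular $f:U_\qu\longrightarrow\vr$ with $R_\pu(A)f(\pu)=\phi$ for all $\pu\in U_\qu$. Applying $R$ and invoking the intertwining identity yields $R_\pu(B)(R\circ f)(\pu) = R\,R_\pu(A)f(\pu) = R\phi$ for all $\pu\in U_\qu$. It then remains only to observe that $R\circ f:U_\qu\longrightarrow\ur$ is itself continuous and right slice-regular; this is the analog of Proposition \ref{H3} for a bounded operator $R\in\B(\vr,\ur)$, and follows by expanding $f$ in a $*_R$-power series about $\qu$ as in \eqref{DE1} and applying $R$ term by term as in \eqref{DE2} (the continuity of $R$ lets it pass through the locally uniform limit, while its right linearity lets it pass the right quaternionic coefficients). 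Hence $\qu\in\rho_B^S(R\phi)$, which gives $\sigma_B^S(R\phi)\subseteq\ls(\phi)$.

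The subspace inclusion is then immediate. If $\phi\in V_A(F)$ then $\ls(\phi)\subseteq F$, and by the inclusion just proved $\sigma_B^S(R\phi)\subseteq\ls(\phi)\subseteq F$, so that $R\phi\in U_B(F)$; therefore $RV_A(F)\subseteq U_B(F)$ for every $F\subseteq\quat$.

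The only genuinely delicate point is the slice-regularity of $R\circ f$, since composing with a right $\quat$-linear operator must be checked to respect the Cauchy--Riemann--type condition \eqref{rightslicereg}. I expect to dispatch this exactly as in Proposition \ref{H3}, or equivalently via the power-series representation, so that once the intertwining identity $R_\qu(B)R = R\,R_\qu(A)$ is in hand the argument is short.
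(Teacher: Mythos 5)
Your proof is correct and follows essentially the same route as the paper: both establish the intertwining identity $R_\pu(B)R = R\,R_\pu(A)$ from $BR=RA$ (the paper does this computation inline in the displayed chain of equalities), push the local resolvent function $f$ forward to $R\circ f$, and then deduce the subspace inclusion $RV_A(F)\subseteq U_B(F)$ immediately from the pointwise spectral inclusion. Your explicit justification that $R\circ f$ is continuous and right slice-regular (via the term-by-term power-series argument in the spirit of Proposition \ref{H3}) addresses a step the paper merely asserts, so your write-up is, if anything, slightly more complete.
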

\begin{proof}
	If $\qu\not\in\ls(\phi)$, then there is an open neighborhood $U\subseteq\quat$ of $\qu$ and a continuous right slice-regular function $f:U\longrightarrow\vr$ such that $R_\pu(A)f(\pu)=\phi$ for all $\pu\in U$. Now $R\circ f:U\longrightarrow\ur$ is a continuous right slice-regular function and
	\begin{eqnarray*}
		R_\pu(B)(R\circ f)(\pu)&=&(B^2R-2\text{Re}(\pu)BR+|\pu|^2R)f(\qu)\\
		&=&(BRA-2\text{Re}(\pu)RA+|\pu|^2R)f(\qu)\\
		&=&(RA^2-2\text{Re}(\pu)RA+|\pu|^2R)f(\qu)=R~R_\pu(A)f(\pu)\\
		&=&R\phi\quad\text{for all}~~\pu\in U.
	\end{eqnarray*}
Thus $\qu\in\rho_B(R\phi)$, and hence $\qu\not\in\sigma_B(R\phi)$. Now, let $\phi\in RV_A(F)$, then there exists $\psi\in V_A(F)$ such that $\phi=R\psi$. Since $\psi\in V_A(F)$, $\ls(\psi)\subseteq F$, thence by the previous result $\sigma_B(R\psi)=\sigma_B(\phi)\subseteq F$, and hence $\phi\in U_B(F)$.
\end{proof}
Let $A\in\B(\vr)$ has property ($\beta$), then, by remark \ref{DR2} and proposition \ref{L-1}, $\sa(A)=\sus(A)$. Similarly, if $A^\dagger$ has property ($\beta$) then $\sa(A)=\apo(A)$. However, for a decomposable operator the situation is particularly pleasant.
\begin{proposition}\label{1.3.3}
	Let $A\in\B(\vr)$ be decomposable. Then
	$$\sa(A)=\apo(A)=\sus(A)=\bigcup\{\ls(\phi)~|~\phi\in\vr\}.$$
\end{proposition}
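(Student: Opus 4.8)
The plan is to separate the four-fold equality into a triple equality that is essentially immediate from SVEP, and the lone remaining equality $\apo(A)=\sa(A)$, which I would reduce to the single assertion that the adjoint $A^\dagger$ has SVEP. First I would record that, since $A$ is decomposable, Theorem \ref{1.2.7} shows $A$ has property ($\beta$), and hence by Remark \ref{DR2} (parts (a)--(b)) $A$ has SVEP. With SVEP available, Proposition \ref{L-1}(d) gives $\sa(A)=\sus(A)$, while Proposition \ref{L-1}(b) gives $\sus(A)=\bigcup\{\ls(\phi)~|~\phi\in\vr\}$. This already yields
\[
\sa(A)=\sus(A)=\bigcup\{\ls(\phi)~|~\phi\in\vr\},
\]
so the only thing left to prove is $\apo(A)=\sa(A)$.

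For this last equality I would invoke the second clause of Proposition \ref{L-1}(d), namely that $\sa(A)=\apo(A)$ holds as soon as $A^\dagger$ has SVEP. (One can cross-check the mechanism through the adjoint: by Proposition \ref{CP1}(c) we have $\sa(A)=\apo(A)\cup\sigma_c^S(A)$, and by Proposition \ref{su1}(a) $\sigma_c^S(A)=\sigma_{pS}(A^\dagger)$, so the missing inclusion $\sigma_c^S(A)\subseteq\apo(A)$ is exactly $\sigma_{pS}(A^\dagger)\subseteq\sus(A^\dagger)=\apo(A)$, which is guaranteed by SVEP of $A^\dagger$ via the eigenvector statement of Proposition \ref{L-1}(c) together with Proposition \ref{su1}(b).) Either way, the entire proof collapses to the statement that $A^\dagger$ enjoys SVEP.

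The main obstacle is precisely this point, and the route I would take is to establish that the adjoint of a decomposable operator is again decomposable; SVEP of $A^\dagger$ would then follow from Remark \ref{DR2}(b) applied to $A^\dagger$. Given an open cover $\quat=U\cup V$, decomposability of $A$ furnishes $A$-invariant closed subspaces $Y,Z$ with $\sa(A|Y)\subseteq U$, $\sa(A|Z)\subseteq V$ and $\vr=Y+Z$; passing to orthogonal complements, $Y^\perp$ and $Z^\perp$ are $A^\dagger$-invariant and closed, using $R_\qu(A)^\dagger=R_\qu(A^\dagger)$ (valid since $\text{Re}(\qu)$ and $|\qu|^2$ are real) together with Proposition \ref{IP30} and Proposition \ref{su1}(c) to transfer the spectral inclusions. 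The genuinely delicate step is that a sum decomposition $\vr=Y+Z$ dualizes only to the intersection relation $Y^\perp\cap Z^\perp=(Y+Z)^\perp=\{0\}$, so a single cover does not directly produce the required sum decomposition for $A^\dagger$; closing this gap is the quaternionic counterpart of the complex duality between Bishop's property ($\beta$) and the decomposition property, and I expect this duality argument, rather than any of the spectral bookkeeping above, to be where the real work lies.
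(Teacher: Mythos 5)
Your opening reduction is sound and matches the paper: decomposability gives property ($\beta$) (Theorem \ref{1.2.7}), hence SVEP (Remark \ref{DR2}), and Proposition \ref{L-1}(b),(d) then yield $\sa(A)=\sus(A)=\bigcup\{\ls(\phi)~|~\phi\in\vr\}$, while $\apo(A)\subseteq\sa(A)$ comes from Proposition \ref{CP1}(c). The problem is the remaining inclusion $\sa(A)\subseteq\apo(A)$, and here your proposal has a genuine gap that you yourself flag: you reduce everything to the claim that $A^\dagger$ has SVEP, propose to get that by showing the adjoint of a decomposable operator is decomposable, and then leave that claim unproved. As you observe, duality destroys the sum decomposition: $\vr=Y+Z$ only dualizes to $Y^\perp\cap Z^\perp=\{0\}$, so a cover of $\quat$ produces no splitting of $\vr$ for $A^\dagger$. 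In the complex theory the self-duality of decomposability (equivalently, the duality between property ($\beta$) and the decomposition property ($\delta$)) is a deep theorem of Albrecht--Eschmeier type, resting on machinery that this paper does not develop in the quaternionic setting; the paper even emphasizes that the structure of $R_\qu(A)$ obstructs such transfers. So with the tools at hand your route cannot be completed, and the proposal as written does not prove the proposition.

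The missing idea is that one can prove $\sa(A)\subseteq\apo(A)$ directly from the definition of decomposability, never mentioning $A^\dagger$. Fix $\qu\in\sa(A)$ and $\epsilon>0$, choose an open set $U$ with $\qu\notin U$ and $U\cup B_\quat(\qu,\epsilon)=\quat$, and apply decomposability to this cover: there are $A$-invariant closed right linear subspaces $Y,Z$ with $\vr=Y+Z$, $\sa(A|Y)\subseteq U$ and $\sa(A|Z)\subseteq B_\quat(\qu,\epsilon)$. Necessarily $Z\neq\{0\}$, since otherwise $\sa(A)=\sa(A|Y)\subseteq U$, contradicting $\qu\notin U$. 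Hence $\sa(A|Z)$ is a nonempty compact set, so $\partial\sa(A|Z)\neq\emptyset$, and by Theorem \ref{T1} (applied to $A|Z$, together with the trivial inclusion $\apo(A|Z)\subseteq\apo(A)$ for an invariant subspace) we get $\emptyset\neq\partial\sa(A|Z)\subseteq\apo(A)$, whence $\apo(A)\cap B_\quat(\qu,\epsilon)\neq\emptyset$. Since $\epsilon>0$ was arbitrary and $\apo(A)$ is closed (Theorem \ref{T1} again), $\qu\in\apo(A)$. This covering argument is exactly what replaces the duality theory you were hoping to invoke, and it closes the gap.
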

\begin{proof}
	Since we know from theorem \ref{1.2.7} and remark \ref{DR2} part (a) that $A$ has SVEP. Therefore, from proposition \ref{L1} we conclude that $\sa(A)=\sus(A)=\bigcup\{\ls(\phi)~|~\phi\in\vr\}$. From proposition \ref{CP1}, we have $\apo(A)\subseteq\sa(A)$. It only remains to prove that $\sa(A)\subseteq\apo(A)$. Given an arbitrary $\qu\in\sa(A)$ and any $\epsilon>0$, let $U\subseteq\quat$ be an open set for which $\qu\not\in U$ and $U\cup B_\quat(\qu,\epsilon)=\quat$. Therefore, by the definition of decomposability, there exists $A$-invariant closed right linear subspaces $Y,Z\subseteq\vr$ for which $\vr=Y+Z$, $\sa(A|Y)\subseteq U$ and $\sa(A|Z)\subseteq B_\quat(\qu,\epsilon).$ Note that $Z$ is non-trivial, since otherwise $\vr=Y$, and hence $\sa(A)=\sa(A|Y)\subseteq U$, which is a contradiction to the fact that $\qu\not\in U$. Since $Z$ is non-trivial,  by theorem \ref{T1}, $\partial\sa(A|Z)\subseteq\apo(A|Z)\subseteq\apo(A)$. Thus, it follows from $\sa(A|Z)\subseteq B_\quat(\qu,\epsilon)$ that  $B_\quat(\qu,\epsilon)\cap\apo(A)\not=\emptyset$ for all $\epsilon>0$. Since, by theorem \ref{T1}, $\apo(A)$ is a closed set, and hence $\qu\in\apo(A)$, which completes the proof.
\end{proof}
Proposition \ref{1.3.3} may, of course, be used to show that a certain operator fails to be decomposable. See the example below.
\begin{definition}\label{1.2.18}
	An operator $A\in\B(\vr)$ has Dunford's property (C) if the local spectral subspace $V_A(F)$ is closed for every closed set $F\subseteq\quat$.
	\end{definition}
\begin{proposition}\label{1.2.19}
	If $A\in\B(\vr)$ has property ($\beta$), then $A$ has property (C).
\end{proposition}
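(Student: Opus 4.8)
The plan is to exploit the reformulation of property ($\beta$) given in Proposition \ref{1.2.6}: for each open set $U\subseteq\quat$, the operator $A_U$ on the Fr\'echet space $H(U,\vr)$ is injective with closed range. Fix a closed set $F\subseteq\quat$ and put $U=\quat\setminus F$, which is open. Since, by Remark \ref{DR2}(a), property ($\beta$) entails SVEP, every $\phi\in V_A(F)$ carries a unique local resolvent function $f_\phi:\lr(\phi)\longrightarrow\vr$ (Remark \ref{DR3}(d)); and because $\ls(\phi)\subseteq F$ forces $U\subseteq\lr(\phi)$, the restriction $f_\phi|_U$ lies in $H(U,\vr)$ and satisfies $A_U(f_\phi|_U)=\phi$, where $\phi$ is read as the constant function $\qu\mapsto\phi$. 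In other words, the constant function $\phi$ belongs to $\ra(A_U)$ precisely when $\phi\in V_A(F)$.

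To prove $V_A(F)$ is closed, I would take a sequence $\{\phi_m\}\subseteq V_A(F)$ with $\phi_m\longrightarrow\phi$ in $\vr$ and show $\phi\in V_A(F)$; since $\vr$ is a metric space this sequential argument suffices. By the previous paragraph each constant function $\phi_m$ lies in $\ra(A_U)$. The key observation is that on constant functions the defining seminorms $\|\cdot\|_n$ of $H(U,\vr)$ all collapse to the norm $\|\cdot\|_{\vr}$, so that $d(\phi_m,\phi)\longrightarrow 0$ in $H(U,\vr)$ as soon as $\|\phi_m-\phi\|_{\vr}\longrightarrow 0$; that is, the constant functions $\phi_m$ converge to the constant function $\phi$ in the topology of $H(U,\vr)$. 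Because $\ra(A_U)$ is closed, the limit $\phi$ (as a constant function) again lies in $\ra(A_U)$, so there is $f\in H(U,\vr)$ with $R_\qu(A)f(\qu)=\phi$ for all $\qu\in U$. This exhibits $U\subseteq\lr(\phi)$, whence $\ls(\phi)\subseteq\quat\setminus U=F$, i.e.\ $\phi\in V_A(F)$.

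The only genuinely delicate points are bookkeeping rather than analysis: one must check that constant maps are continuous and right slice-regular (so that they lie in $H(U,\vr)$ and $\overline{\partial}_I$ annihilates them), and that membership of the constant function $\phi$ in $\ra(A_U)$ is exactly equivalent to $U\subseteq\lr(\phi)$, which is just Definition \ref{L} together with the SVEP-gluing of the local solutions. I expect the main obstacle to be the careful transfer between the Fr\'echet topology of $H(U,\vr)$ and the norm topology of $\vr$ through the constant-function embedding; once the map $\vr\hookrightarrow H(U,\vr)$, $\phi\mapsto(\qu\mapsto\phi)$, is recognized as a topological linear embedding (isometric for every seminorm $\|\cdot\|_n$) with closed image, the closedness of $\ra(A_U)$ delivers the closedness of $V_A(F)$ at once.
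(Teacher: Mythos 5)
Your proposal is correct and follows essentially the same route as the paper's own proof: identify $\vr$ with the constant functions in $H(\quat\setminus F,\vr)$ (noting the seminorms collapse to the norm there), observe that $\phi\in V_A(F)$ exactly when the constant function $\phi$ lies in $\ra(A_{\quat\setminus F})$, and invoke Proposition \ref{1.2.6} to get closedness of that range. If anything, your use of the SVEP-glued local resolvent function is slightly more careful than the paper's phrasing, which loosely asserts that $R_\qu(A)^{-1}$ exists on $\quat\setminus F$; the substance is the same.
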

\begin{proof}
	Let $F\subseteq\quat$ be an arbitrary closed set. The space $\vr$ may be identified with the space of constant functions in $H(\quat\setminus F,\vr)$ by defining, for each $\phi\in\vr$, $f:\quat\setminus F\longrightarrow\vr$ with $f(\qu)=\phi$ for all $\qu\in\quat\setminus F$. In this identification, the norm topology of $\vr$ coincides with the topology induced by the metric on  $H(\quat\setminus F,\vr)$. By remark \ref{DR2}, property ($\beta$) implies SVEP. Let $\phi\in V_A(F)$. By SVEP, $R_\qu(A)^{-1}$ exists for all $\qu\in\quat\setminus F$, and thus define $f:\quat\setminus F\longrightarrow\vr$ by $f(\qu)=R_\qu(A)^{-1}\phi$, then $f$ is a continuous right slice-regular function on $\quat\setminus F$. Thus, $R_\qu(A)f(\qu)=\phi$ for all $\qu\in\quat\setminus F$. That is, $(A_{\quat\setminus F}f)(\qu)=\phi$ for all $\qu\in\quat\setminus F$. Hence $\phi\in A_{\quat\setminus F}H(A_{\quat\setminus F},\vr)$, and therefore
	$$V_A(F)=\vr\cap A_{\quat\setminus F}H({\quat\setminus F},\vr).$$
	Hence, by proposition \ref{1.2.6}, $V_A(F)$ is closed.
\end{proof}
For $A\in\B(\vr)$,
$$\kappa(A)=\inf\{\|A\phi\|~~|~~\phi\in\vr~~\text{with}~~\|\phi\|=1\}$$  denotes the lower bound of $A$, then $\kappa(A^m)\kappa(A^n)\leq\kappa(A^{m+n})$ for all $m,n\in\N$. The following limit exists (see \cite{Ka} for details)
	$$i(A)=\lim_{n\rightarrow\infty}\kappa(A^n)^{1/n}=\sup_{n\in\N}\kappa(A^n)^{1/n}.$$
	\begin{proposition}\cite{Ka}\label{1.6.2} Every operator $A\in\B(\vr)$ has the following property.
		 $\apo(A)$ is contained in the spherical annulus $\{\qu\in\quat~~|~~i(A)\leq|\qu|\leq r_S(A)\}$, where $\displaystyle r_S(A)=\lim_{n\rightarrow\infty}\|A^n\|^{1/n}=\inf_{n\in\N}\|A^n\|^{1/n}$ is the spectral radius.
		\end{proposition}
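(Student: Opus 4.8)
The plan is to establish the two inclusions $\{|\qu|\le r_S(A)\}$ and $\{|\qu|\ge i(A)\}$ separately; the upper bound is routine and the lower bound carries the real content. For the upper bound, recall from Proposition~\ref{CP1}(c) that $\apo(A)\subseteq\sa(A)$, so it suffices to see $\sa(A)\subseteq\{\qu\in\quat:|\qu|\le r_S(A)\}$. This follows from the S-resolvent series $R_\pu(A)^{-1}=\sum_{n=0}^{\infty}A^n\sum_{k=0}^{n}\overline{\pu}^{\,-k-1}\pu^{\,-n+k-1}$ recalled in the proof of Proposition~\ref{EM}: its $n$-th term is dominated in norm by $(n+1)\|A^n\|\,|\pu|^{-n-2}$, and since $\limsup_n\|A^n\|^{1/n}=r_S(A)$ the series converges absolutely whenever $|\pu|>r_S(A)$. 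Hence every such $\pu$ lies in $\rho_S(A)$, giving $\sa(A)\subseteq\{|\qu|\le r_S(A)\}$ and therefore $\apo(A)\subseteq\{|\qu|\le r_S(A)\}$.

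For the lower bound I would argue by contraposition: assuming $t:=|\qu|<i(A)$, I will show $R_\qu(A)$ is bounded below, so that $\qu\notin\apo(A)$ by Proposition~\ref{P4}. Write $s=\text{Re}(\qu)$, so $|s|\le t$, fix any $I\in\mathbb{S}$, and put $\lambda=s+I\sqrt{t^2-s^2}\in\C_I$, so that $\lambda+\overline{\lambda}=2s$ and $\lambda\overline{\lambda}=t^2$. Introduce the right-multiplication map $M_\lambda\colon\vr\to\vr$, $M_\lambda\phi=\phi\lambda$. A direct computation using the right linearity of $A$ (which gives $A(\phi\lambda)=(A\phi)\lambda$) yields the factorization $R_\qu(A)=(A-M_\lambda)(A-M_{\overline{\lambda}})$. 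The delicate point, and the main obstacle, is that $M_\lambda$ is \emph{not} $\quat$-linear unless $\lambda\in\R$; however $M_\lambda$ is additive, bounded with $\|M_\lambda\phi\|=t\|\phi\|$, and commutes with $A$ (again by right linearity of $A$), and since I only ever use metric estimates — while the operator $R_\qu(A)$ of interest remains genuinely right linear — this lack of linearity is harmless for the conclusion.

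It remains to show each factor is bounded below. Since $M_\lambda$ and $A$ commute and $M_\lambda^{\,k}=M_{\lambda^k}$ with $\|M_\lambda^{\,k}\phi\|=t^k\|\phi\|$, the telescoping identity $A^n-M_\lambda^{\,n}=S_n(A-M_\lambda)$ holds with $S_n=\sum_{k=0}^{n-1}A^{\,n-1-k}M_\lambda^{\,k}$ and $\|S_n\|\le\sum_{k=0}^{n-1}\|A\|^{\,n-1-k}t^k$. For a unit vector $\phi$ one has $\|(A^n-M_\lambda^{\,n})\phi\|\ge\|A^n\phi\|-t^n\ge\kappa(A^n)-t^n$, while $\|(A^n-M_\lambda^{\,n})\phi\|=\|S_n(A-M_\lambda)\phi\|\le\|S_n\|\,\|(A-M_\lambda)\phi\|$; hence $\|(A-M_\lambda)\phi\|\ge(\kappa(A^n)-t^n)/\|S_n\|$. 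Because $t<i(A)=\sup_n\kappa(A^n)^{1/n}$, there is an $n$ with $\kappa(A^n)>t^n$, so $A-M_\lambda$ is bounded below; the identical argument (with $\overline{\lambda}$, $|\overline{\lambda}|=t$) shows $A-M_{\overline{\lambda}}$ is bounded below.

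Finally, a composition of two operators bounded below is bounded below, since $\|(A-M_\lambda)(A-M_{\overline{\lambda}})\phi\|\ge c_1\|(A-M_{\overline{\lambda}})\phi\|\ge c_1c_2\|\phi\|$ for the respective lower bounds $c_1,c_2>0$. Thus $R_\qu(A)$ satisfies $\|R_\qu(A)\phi\|\ge c\|\phi\|$ with $c=c_1c_2>0$, and Proposition~\ref{P4} (equivalently Proposition~\ref{BBP}) gives $\qu\notin\apo(A)$. This proves $\apo(A)\subseteq\{\qu:i(A)\le|\qu|\}$, which together with the upper bound yields the asserted containment in the spherical annulus. I expect the only genuinely subtle step to be the factorization paragraph: one must verify that $R_\qu(A)=(A-M_\lambda)(A-M_{\overline{\lambda}})$ really holds with the non-$\quat$-linear maps $M_\lambda$, that $M_\lambda$ commutes with $A$, and that every estimate used is purely metric so that the failure of $\quat$-linearity of the intermediate factors never enters the final bound on the right-linear operator $R_\qu(A)$.
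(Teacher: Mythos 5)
The paper does not actually prove this proposition: it is imported verbatim from \cite{Ka}, so there is no in-text argument to compare yours against. Assessed on its own terms, your proof is correct, and it is a self-contained quaternionic adaptation of the classical argument (Laursen--Neumann, Proposition 1.6.2) rather than a citation. The upper bound is fine: $\apo(A)\subseteq\sa(A)$ by Proposition \ref{CP1}(c), and the series $\sum_{n}A^n\sum_{k=0}^n\overline{\pu}^{-k-1}\pu^{-n+k-1}$ converges absolutely for $|\pu|>r_S(A)$; the only point worth making explicit is that convergence alone does not place $\pu$ in $\rho_S(A)$ --- one must also know that the sum inverts $R_\pu(A)$, which is exactly the content of Theorem 3.1.5 of \cite{Jo} that the paper itself invokes in the proof of Proposition \ref{EM} (note the coefficients $\sum_{k=0}^n\overline{\pu}^{-k-1}\pu^{-n+k-1}$ are real, so the sum is again right linear). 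The lower bound is the substantive part and it holds up. With $\lambda=\mathrm{Re}(\qu)+I\sqrt{|\qu|^2-\mathrm{Re}(\qu)^2}$ and $M_\lambda\phi=\phi\lambda$, right linearity of $A$ gives both $AM_\lambda=M_\lambda A$ and $A(\phi\overline{\lambda})=(A\phi)\overline{\lambda}$, whence
\[
(A-M_\lambda)(A-M_{\overline{\lambda}})\phi=A^2\phi-(A\phi)(\lambda+\overline{\lambda})+\phi\,|\lambda|^2=R_\qu(A)\phi,
\]
because $\lambda+\overline{\lambda}=2\mathrm{Re}(\qu)$ and $|\lambda|^2=|\qu|^2$ are real scalars. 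The failure of $\quat$-linearity of $M_\lambda$ is indeed harmless: $M_\lambda$ is additive and $\R$-homogeneous with $\|M_\lambda\phi\|=|\qu|\,\|\phi\|$, which is all that the telescoping estimate $\|(A-M_\lambda)\phi\|\geq\bigl(\kappa(A^n)-|\qu|^n\bigr)/\|S_n\|$ uses, and passing from unit vectors to arbitrary vectors requires only real homogeneity. Choosing $n$ with $\kappa(A^n)>|\qu|^n$ (possible since $|\qu|<i(A)=\sup_n\kappa(A^n)^{1/n}$) makes both factors bounded below, hence $R_\qu(A)$ is bounded below and $\qu\notin\apo(A)$ by Proposition \ref{P4} (or directly from Definition \ref{D1}). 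What your approach buys is notable: the factorization of the quadratic pseudo-resolvent through merely real-linear right multiplications circumvents precisely the obstacle the authors lament in the introduction (the lack of a good identity relating $R_\qu(A)$ to first-order expressions), and shows that for purely metric statements such as this one that obstacle can be bypassed.
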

	\begin{example}\label{page37}
			 Consider the right quaternionic Hilbert space
		$$l^2(\N)=\left\{x:\N\longrightarrow\quat~~|~~\sum_{i=1}^\infty |x_i|^2<\infty\right\}$$
		 Consider the unilateral right shift $B$ on $l^2(\N)$ given by
		$$B(x_1,x_2,x_3,\cdots)=(0,x_1,x_2,\cdots)\quad\text{for all}~~~(x_1,x_2,x_3,\cdots)\in l^2(\N).$$
		Its adjoint is $B^\dagger(x_1,x_2,x_3,\cdots)=(x_2,x_3,\cdots)$. Also, since $\|Bx\|=\|x\|$ for all $x\in l^2(\N)$, we have $\|B\|=1$.  Consider the right eigenvalue problem of $B^\dagger$. That is, if $B^\dagger(x_1,x_2,x_3,\cdots)=(x_1,x_2,x_3,\cdots)\qu$, then $(x_2,x_3,\cdots)=(x_1,x_2,x_3,\cdots)\qu$. Hence $x_{n+1}=x_n\qu$ for all $n\geq 1$, and therefore $x_n=x_1\qu^{n-1}$ for all $n\geq 1$. That is, $(x_1,x_2,x_3,\cdots)=x_1(1,\qu,\qu^2,\cdots)$. Thus, by proposition \ref{su1}, $\qu\in\sigma_{pS}(B^\dagger)\subseteq\sigma_S(B^\dagger)=\sa(B)$, if $|\qu|<1$. Therefore, since $\|B\|=1$, the S-spectral radius $\displaystyle r_S(B)=\lim_{n\rightarrow\infty}\|B^n\|^{1/n}=1$, and $\sa(B)$ is compact, we have $\sa(B)=\nabla_\quat(0,1)$ the closed quaternionic unit ball. Since $\|Bx\|=\|x\|$ for all $x\in l^2(\N)$, $i(B)=1$, thus by proposition \ref{1.6.2}, $\apo(B)\subseteq\partial\nabla_\quat(0,1)$, the boundary of $\nabla_\quat(0,1)$. Thus $\sa(B)\not=\apo(B)$, hence by proposition \ref{1.3.3}, the operator $B$ is not decomposable.
		\end{example}
\section{Acknowledments}
K. Thirulogasanthar would like to thank the FRQNT, Fonds de la Recherche  Nature et  Technologies (Quebec, Canada) for partial financial support under the grant number 2017-CO-201915. Part of this work was done while he was visiting the University of Jaffna to which he expresses his thanks for the hospitality.


\begin{thebibliography}{XXXX}
\bibitem{Ad} Adler, S.L., {\em Quaternionic Quantum Mechanics and Quantum Fields}, Oxford University Press, New York, 1995.

\bibitem{Ai} Aiena, P., {\em Fredholm and local spectral theory, with applications to multipliers}, Kluwer Academic Publishers, Dordrecht, 2004.

\bibitem{AC} Alpay, D., Colombo, F., Kimsey, D.P., {\em The spectral theorem for quaternionic unbounded normal operators based on the $S$-spectrum}, J. Math. Phys. {\bf 57} (2016), 023503.

\bibitem{Al} Alpay, D., Colombo, F., Sabadini, I., {\em Slice hyperholomorphic Schur analysis}, Springer International Publishing (2016).


\bibitem{Am} Buchmann, A., {\em A brief history of quaternions and the theory of holomorphic functions of quaternionic variables}, arXiv:1111.6088v1[Math.HO]

\bibitem{Fab} Colombo, F., Sabadini, I., \textit{On Some Properties of the Quaternionic Functional Calculus}, J. Geom. Anal., {\bf 19} (2009), 601-627.

\bibitem{Fab1} Colombo, F., Sabadini, I., \textit{On the  Formulations of the Quaternionic Functional Calculus}, J. Geom. Phys., {\bf 60} (2010), 1490-1508.

\bibitem{Fab2} Colombo, F., Gentili, G., Sabadini, I., Struppa, D.C., {\em Non commutative functional calculus: Bounded operators}, Complex Analysis and Operator Theory, {\bf 4} (2010), 821-843.


\bibitem{NFC} Colombo, F., Sabadini, I., Struppa, D.C., {\em Noncommutative Functional Calculus}, Birkh\"auser Basel, 2011.

\bibitem{Jo} Colombo, F., Gantner, J., Kimsey, D.P., {\em Spectral theory on the S-spectrum for quaternionic operators},

\bibitem{ESR} Colombo, F., Sabadini, I., Struppa, D.C., {\em Entire slice regular functions}, Springer, Switzerland, 2016.

\bibitem{ghimorper} Ghiloni, R., Moretti, W. and Perotti, A., {\em Continuous slice functional calculus in quaternionic Hilbert spaces\/,} Rev. Math. Phys. {\bf 25} (2013), 1350006.

\bibitem{Gra1} Gentili, G. and Struppa, D.C., {\em A new theory of regular functions of a quaternionic variable}, Adv. Math. {\bf 216} (2007), 279-301.

\bibitem{Gra2} Gentili, G. and Stoppato, C., {\em Power series and analyticity over the quaternions}, Math. Ann. {\bf 352} (2012), 113-131.

\bibitem{GSS}Gentili, G., Stoppato, C., Struppa, D.C., {\em Regular function for a quaternionic variable: Springer Monographs in Mathematics}, Springer, Berlin, 2013.


\bibitem{La} Laursen, K.B., Neumann, M.M., {\em An introduction to local spectral theory}, Oxford University Press, Oxford, 2000.

\bibitem{BT} Muraleetharan, B., Thirulogasanthar, K., {\em Deficiency Indices of Some Classes of Unbounded $\quat$-Operators},  Complex Anal. Oper. Theory (2017), 1-29. https://doi.org/10.1007/s11785-017-0702-4.

\bibitem{Mu} Muraleetharan, B, Thirulogasanthar, K., {\em Coherent state quantization of quaternions}, J. Math. Phys., {\bf 56} (2015), 083510.

\bibitem{Fr}  Muraleetharan, B, Thirulogasanthar, K., {\em Fredholm operators and essential S-spectrum in the quaternionic setting}, J. Math. Phys., {\bf 59} (2018), 103506.

\bibitem{Ka}   Thirulogasanthar, K.,  Muraleetharan, B., {\em Kato S-spectrum in the quaternionic setting}, 	arXiv:1904.02977.




\bibitem{Vis} Viswanath, K., {\em Normal operators on quaternionic Hilbert spaces}, Trans. Amer. Math. Soc. {\bf 162} (1971), 337-350.



\end{thebibliography}
\end{document}